  \newlength{\basicwidth}\setlength{\basicwidth}{167mm}  
  \newlength{\shortbasicwidth}\setlength{\shortbasicwidth}{155mm}
  \newlength{\basicheight} \setlength{\basicheight}{230mm} 
\renewcommand\subsubsection{\@startsection{subsubsection}{3}%
  \z@{.5\linespacing\@plus.7\linespacing}{-.5em}%
  {\normalfont\bfseries}}
\renewcommand\subsection{\@startsection{subsection}{2}%
   \z@{.5\linespacing\@plus.7\linespacing}{-.5em}%
  {\normalfont\bfseries}}
\renewcommand\section{\@startsection{section}{1}%
  \z@{.7\linespacing\@plus\linespacing}{.5\linespacing}%
  {\large\bfseries\centering}}
\renewcommand{\@secnumfont}{\bfseries}
\renewcommand{\tocsection}[3]{%
  \indentlabel{\@ifnotempty{#2}{\bfseries\ignorespaces  #1 #2.\hspace{0.3cm}}}\bfseries#3}
\newcommand\@dotsep{4.5}
\def\@tocline#1#2#3#4#5#6#7{\relax
  \ifnum #1>\c@tocdepth 
  \else
    \par \addpenalty\@secpenalty\addvspace{#2}%
    \begingroup \hyphenpenalty\@M
    \@ifempty{#4}{%
      \@tempdima\csname r@tocindent\number#1\endcsname\relax
    }{%
      \@tempdima#4\relax
    }%
    \parindent\z@ \leftskip#3\relax \advance\leftskip\@tempdima\relax
    \rightskip\@pnumwidth plus1em \parfillskip-\@pnumwidth
    #5\leavevmode\hskip-\@tempdima{#6}\nobreak
    \leaders\hbox{$\m@th\mkern \@dotsep mu\hbox{.}\mkern \@dotsep mu$}\hfill
    \nobreak
    \hbox to\@pnumwidth{\@tocpagenum{#7}}\par
    \nobreak
    \endgroup
  \fi}
\def\l@section{\@tocline{2}{12pt}{0pc}{5pc}{}}
\def\las#1{\label{#1}\hypertarget{#1}{}}
\def\sref#1{\hyperlink{#1}{\ref*{#1}}}
\def\C{{\mathbb C }}
\def\N{{\mathbb N }}
\def\R{{\mathbb R }}
\def\D{{\mathbb D }}
\def\H{{\mathbb H }} 
\newcommand{\fo}[1]{{\mbox{\footnotesize{$#1$}}}}
\newcommand{\tn}[1]{{\mbox{\tiny{$#1$}}}}
\newcommand{\lar}[1]{{\mbox{\large{$#1$}}}}
\newcommand{\lfrac}[2]{{\lar{\frac{#1}{\vphantom{A^{A^a}}#2}}}}
\newtheorem{theorem}{Theorem}[section]
\newtheorem{corollary}{Corollary}[section]
\newtheorem{lemma}{Lemma}[section]
\numberwithin{equation}{section}
\DeclareMathOperator*{\limsp}{\overline{\lim}}
\DeclareMathOperator{\hol}{\rm{Hol}} 
\DeclareMathOperator{\supp}{\rm{supp}}
\DeclareMathOperator{\IM}{\rm{Im}}
\DeclareMathOperator{\RE}{\rm{Re}}
\newcommand{\ms}{{\mathcal{M}}^+ (\R)}
\newcommand{\PL}{\mathcal{P} }
\def\primP{{\mathcal P}_{\!\scriptscriptstyle\int}}
\def\logP{\log{\mathcal P}}
\def\DeltaprimP{\Delta_{ \!\raisebox{-2pt}{{\small $\primP$}}}}
\def\lm{\C\setminus[1,+\infty)}
\def\br{{\mathcal{B}}(\R)}
\def\ds{\displaystyle}
\let\oldtocsection=\tocsection
\let\oldtocsubsection=\tocsubsection
\let\oldtocsubsubsection=\tocsubsubsection
\renewcommand{\tocsection}[2]{\hspace{0em}\oldtocsection{#1}{#2}}
\renewcommand{\tocsubsection}[2]{\hspace{1.8em}\oldtocsubsection{#1}{#2}}
\renewcommand{\tocsubsubsection}[2]{\hspace{4.5em}\oldtocsubsubsection{#1}{#2}}
\title[Universally starlike and Pick functions]{{\large{Universally starlike and Pick functions}}}
\author{Andrew  Bakan}
\address{ Andrew  Bakan, Institute of Mathematics National Academy of Sciences
         of Ukraine, 01601 Kyiv, Ukraine}
\email{andrew@bakan.kiev.ua}
\author{Stephan Ruscheweyh}
\address{Stephan Ruscheweyh, Institut f\"ur Mathematik, Universit\"at
         W\"urzburg, 97074 W\"urzburg, Germany}
\email{ruscheweyh@mathematik.uni-wuerzburg.de}
\author{Luis Salinas}
\address{Luis salinas, CCTVal \& Departamento de Inform\'atica, UTFSM,
         Valpara\'\i{}so, Chile}
\email{luis.salinas@usm.cl}
\subjclass{30C45, 30E20  (primary),  30C20, 30E25, 30D55, 31A05 (secondary)}
\keywords{Starlike, Pick and Harmonic functions; $H_p$-spaces;
          Boundary behavior}
\thanks{A. Bakan gratefully accepts the hospitality and the financial
support provided by the UTFSM--Universidad T\'ecnica Federico Santa
Mar\'\i a and the Basal Project FB-0821-CCTVal--Centro Cient\'\i fico
Tecnol\'ogico de Valpara\'\i so, and from the German Academic Exchange
Service (DAAD, Grant 57210233).
S. Ruscheweyh and L. Salinas acknowledge support from UTFSM, FONDECYT
Grant 11500810 and from Basal Project FB-0821-CCTVal.}
\begin{document}

\begin{abstract}
Denote by $\mathcal{P}_{\log}$ the set of all non-constant Pick
functions $f$ whose logarithmic derivatives $f^{\, \prime}/f$ also
belong to the Pick class.  Let $\mathcal{U}(\Lambda)$ be the family
of functions $z\cdot f(z)$, where $f \in\mathcal{P}_{\log}$ and $f$
is holomorphic on $\Lambda:=\C\setminus [1, +\infty)$.
Important examples of functions in $\mathcal{U}(\Lambda)$ are the
classical po\-ly\-lo\-ga\-rithms $Li_\alpha(z)$ $:=$
$\sum_{k=1}^{\infty}z^k/k^\alpha$ for $\alpha\geq 0$, see
\cite{b15}(2015).

In this note we prove that every $\varphi \in \mathcal{U}(\Lambda)$ is
universally starlike, i.e., $\varphi$ maps every circular domain in
$\Lambda$ containing the origin one-to-one onto a starlike domain.
Furthermore, we show that every non-constant function $f \in
\mathcal{P}_{\log}$ belongs to the Hardy space $H_p$ on the upper
half-plane for some constant $p=p(f) > 1$, unless $f$ is proportional
to some function $(a-z)^{-\theta}$ with $a \in \R$ and $0 < \theta
\leq 1$. Finally we derive a necessary and sufficient condition on a
real-valued function $v$ for which there exists $f \in
\mathcal{P}_{\log}$ such that $v (x) = \lim_{\varepsilon \downarrow 0}
{\rm{Im}} f (x + i \varepsilon)$ for almost all $x \in \R$.

\end{abstract}

\maketitle
\thispagestyle{empty}

\vspace{-0.35cm}
\begin{center}
{\small\tableofcontents}
\end{center}


\section{Introduction}

\vspace{0.35cm}
\subsection{Universally starlike functions}$\phantom{a}$ \\

\vspace{-0.35cm}
Universally starlike functions have been introduced in \cite{R},
\cite{rusa-MZ-2009} as
analytic functions in the slit domain $\Lambda:=\C\setminus[1,\infty)$
satisfying the following conditions: (i) $f(0)=f'(0)-1=0$, (ii) $f$ is
univalent in $\Lambda$ and maps any circular domain $\C\subset\Lambda$
containing the origin onto a domain starlike with respect to the origin.
In some subsequent papers this notion has been extended to other domains
and  classes of so-called (universally)
prestarlike functions in $\Lambda$ of  order $\alpha<1$ have been introduced. The
universally starlike functions correspond to the case $\alpha=1/2$ and
some interesting general properties and representations of those
functions have been discovered. In particular, it turns out that these
functions are necessarily Pick functions (in $\Lambda$), and can be
represented in terms of specific probability  measures on the interval
$[0,1]$ (see below).

The main goal of this paper is to provide a complete characterization of the
universally starlike functions, relying on the theory of Pick functions.
The paper also contains many other results, which are interesting by
themselves, regarding the Hardy classes $H_p$ of the upper half-plane.

In  2009 St. Ruscheweyh, L. Salinas and T. Sugawa \cite[Cor.1, p.289]{R}
proved that $\Psi$ is universally starlike if and only if there exists
$\mu\in\mathcal{M}^{+}(\R)$ such that $\mu(\R )=1$,
$\supp\mu\subseteq[0,1]$ and
\begin{equation}\label{1}
\frac{\Psi(z)}{z}
= \exp\left( \int_{[0,1]} \log\frac{1}{1-tz}\,d\mu(t)\right)\,,\quad
  z\in\C\setminus[1,+\infty)\,,
\end{equation}
where the principal branch of the logarithm is taken, $\mathcal{M}^{+}(\R)$
denotes  the cone of all non-negative Borel measures on $\R$ which are finite on compact sets, and
 $$\supp\mu:= \left\{ x\in\R \mid \mu\big( (x-\varepsilon,\,x+\varepsilon) \big)
 >0\ \forall\varepsilon>0 \right\} \, .$$

Let $\mathcal{U}(\Lambda)$ be the family of all functions $z\cdot f(z)$,
where $f$ is holomorphic on $\Lambda:=\C\setminus [1, +\infty)$ and
$f$ is a non-constant Pick function whose logarithmic derivative
$f^{\, \prime}/f$ also belong to the Pick class.

One of the main results of the present paper is that every function
$\varphi\in\mathcal{U}(\Lambda)$ is universally starlike,
i.e., $\varphi$ maps every circular domain in
$\Lambda$ containing the origin one-to-one onto a starlike domain.

\vspace{0.15cm}
Let $\hol(D)$ denote the set of all holomorphic functions in $D\subset\C$,
$C(\R)$ the set of all functions continuous on $\R$,
$\br$ the family of all Borel subsets of
$\R$,\
$\H:= \left\{ z\in\C \mid \IM z>0 \right\}$ and
$\D:= \left\{ z\in\C \mid |z| < 1 \right\}$.

For any $0<p<+\infty $ we consider the Hardy spaces on the unit disk
\cite[p.68]{ko} and on the upper half plane  \cite[p. 168]{dur0},
\cite[p. 112]{ko} as well

\vspace{-0.2cm}
\begin{align*}
H_p(\D)
&:= \left\{ f\in\hol(\D) \mid \|f\|_{H_p(\D)}^{\max\{1,p\}}
    := \sup\limits_{0\leq r<1} \int_{-\pi}^{\,\pi}
    \left|f(re^{i\theta})\right|^p\,d\theta <+\infty \right\}\,, \\[0.2cm]
H_p(\H)
&:= \left\{ f\in\hol(\H) \mid f_{\D}(w) :=
    f\left( \frac{i(1-w)}{1+w}\right)\in H_p(\D) \right\}\,, \\[0.2cm]
\mathcal{H}_p(\H)
&:= \left\{ f\in\hol(\H) \mid \sup\limits_{y>0}
    \int_{\R} |f(x+iy)|^p\,dx < +\infty \right\} \subset H_p(\H)\,.
\end{align*}

\vspace{0.25cm} \noindent
We also deal with the real space $L_{\infty}(\R)$ consisting of all Borel
measurable real-valued functions $f$ defined on $\R$  such that
$\|f\|_{L_{\infty}(\R)} := \inf\left\{ a>0 \mid
 m\big( \{ x\in\R \mid |f(x)|> a \} \big) = 0 \right\} < +\infty$,
where it is assumed that $\inf\emptyset:= +\infty$ and $m$ denotes the
Lebesgue measure on the real line.
Recall that two functions in $L_{\infty}(\R)$ are identified
whenever they are equal almost everywhere with respect to
the Lebesgue measure $m$ (in short: $m$-a.e. or just 'almost everywhere').

\vspace{0.15cm}
Let $\Omega\subset\C$ be a domain containing the origin,
$\hol_{1}(\Omega)$ denote the class of functions in $\hol(\Omega)$
normalized by $f(0)=f'(0)-1=0$ and
$\hol_{0}(\Omega)$ be the set of functions $f\in\hol(\Omega)$ with $f(0)=1$.
A domain is referred to as \emph{circular} when it is a disk or
a half plane.
A domain $\Omega$ is called a \emph{starlike} with respect to the origin if
a straight line segment connecting an arbitrary point in it with the origin
is contained in $\Omega$.
A function $\Psi$  is called \emph{universally starlike} (with respect to
the origin) if $\Psi\in\hol_{1}(\lm)$ and every circular domain in $\lm$
containing the origin is mapped by $\Psi$ one-to-one onto a starlike domain.

\vspace{0.35cm}
\subsection{Pick functions}$\phantom{a}$ \\

\vspace{-0.35cm}

Let ${\mathcal{P}}$ denote the set of Pick functions, i.e. the set of
holomorphic functions $\Phi$ on $\H$  with non-negative imaginary part,
extended to $-\H$ by means of $\Phi(z) = \overline{\Phi(\overline{z})}$
for $z\in-\H$.
Thus,
\begin{equation}\label{2aa}
\mathcal{P}
:= \left\{ \Phi\in\hol(\C\setminus\R) \ \left|\quad
   \frac{\IM\Phi(z)}{\IM z}\geq 0\,, \ z\in\C\setminus\R\,;\
   \Phi(z) = \overline{\Phi(\overline{z})}\,, \ z\in-\H \ \right.\right\}\,.
\end{equation}
It is known (see \cite[p.31]{berg}) that every $\Phi\in \PL$ is
either a real constant or $\IM\Phi(z)> 0$ for all $z\in \H$.
Each function  in the class $\mathcal{P}$ has a unique canonical
representation of the form (see \cite[Th.1, p.20]{D}, \cite[Lem.2.1, p.23]{sho})
\begin{equation}\label{2}
\Phi(z)
= \alpha z + \beta +
  \int_{-\infty}^{+\infty} \left( \frac{1}{t-z}-\frac{t}{1+t^2} \right)\,d\sigma(t)\,,
  \quad z\in\C\setminus\R\,,
\end{equation}
where $\alpha\geq 0$, $\beta\in\R$ and $\sigma\in\mathcal{M}^{+}(\R)$
for which $\int_{\R}(1+t^2)^{-1}\,d\sigma(t)$ is finite.
Conversely, any function of the form \eqref{2} is in $\mathcal{P}$.
It has been proved in \cite[Lem.2, p.26]{D} that
\begin{equation}\label{a2}
\Phi\in\mathcal{P} \cap \hol\big( \,(a,b)\, \big)
\quad\Leftrightarrow\quad
\supp\sigma\subset\R\setminus(a,b)\,,
\end{equation}
where $a,b\in\R$, $a<b$ and $\sigma$ is a measure corresponding to $\Phi$
by \eqref{2}.


For arbitrary $\Phi\in\mathcal{P}\setminus \{0\}$  we have
$\log\Phi\in\mathcal{P}$ and any function
\begin{equation*}
f\in\log\mathcal{P}
:= \big\{ \log\Phi \mid \Phi\in\mathcal{P}\setminus\{0\} \big\}
\subset\mathcal{P}
\end{equation*}
can be uniquely represented in the form (see \cite[p.27]{D})
\begin{equation}\label{3}
f(z) = \beta + \int_{-\infty}^{+\infty}
\left( \frac{1}{t-z}-\frac{t}{1+t^2} \right)\,\rho(t)\,d t\,,\quad
z\in\C\setminus\R\,,
\end{equation}
where $\beta\in\R$ and $\rho\in L_{\infty}(\R)$ satisfies
$0\leq\rho(x)\leq 1$ for almost all $x\in\R$ with respect to
the Lebesgue measure (in short: {\em for almost all\/}).
Conversely, any function of the form \eqref{3} is in $ \log\mathcal{P}$.

\vspace{0.25cm}
\subsection{Boundary values of Pick functions}\las{sec1.2BVPickFus}$\phantom{a}$ \\

\vspace{-0.35cm}
For arbitrary  $\Phi\in\mathcal{P}\setminus\{0\}$ we denote
$f:=\log\Phi\in\log\mathcal{P}$ and replace the upper half plane by the
unit disk with the help of the change of variables
\begin{equation*}
\Phi_{\D}(w) :=
\Phi\left( \frac{i(1-w)}{1+w} \right)\,,\quad
f_{\D}(w) = \log\Phi_{\D}(w)\,,\quad w\in\D\,.
\end{equation*}
Then the formula \eqref{3} can be rewritten as follows (cp.\cite[p.76]{ko})
\begin{equation}\label{30}
f_{\D}(w)
= \beta + \frac{i}{2}\int_{-\pi}^{\pi} \frac{\,e^{i\theta}+w\,}{e^{i\theta}-w}\
  \rho\left( \tan\frac{\theta}{2} \right)\; d\theta\,.
\end{equation}
In view of A.E. Plessner's theorem \cite[p.17]{ples}
(see \cite[Ex.24, p.245]{tit1}, \cite[p.58]{ko}),
$f_{\D}\in\hol(\D)$ and $\IM f_{\D}\geq0$ in $\D$ imply that for almost all
$\theta\in[-\pi,\pi]$ there exists a finite limit $f_{\D} (e^{i \theta})$ of
$f_{\D}(w)$ as $w\in\D$ tends to $e^{i \theta}$ non-tangentially
(in short: $w \overset{\angle}{\longrightarrow} e^{i \theta} $)
\cite[p.11, 15]{ko}.
Thus,  for almost all $x\in\R$ there exists the (non-tangential) finite
boundary value
$f (x+i0) := \lim\limits_{{\H}\ni z\overset{\angle}{\rightarrow} x } f(z)$ and since
$f( \tan\frac{\theta}{2}+i\,0 ) = f_{\D}(e^{i \theta})$, $\theta\in(-\pi,\pi)$,
in view of Fatou's \cite[p.11]{ko} and Titchmarsh's \cite[p.20]{ko} theorems
we have
\begin{equation}\label{30a}
f\left( \tan\lfrac{\theta}{2}+i\,0 \right)
=  i\pi \rho\left( \tan\lfrac{\theta}{2} \right) + \beta +
   \lim_{0<\varepsilon\to0} \int_{\varepsilon}^{\pi}
   \frac{ \rho\left( \tan\lfrac{(\theta+t)}{2} \right) -
          \rho\left( \tan\lfrac{(\theta-t)}{2} \right)}
        { 2\tan\lfrac{t}{2} }\;dt
\end{equation}
for almost all $\theta\in(-\pi,\pi)$.
For the corresponding $x \in\R$ the limit
\begin{equation*}
\Phi(x+i0):= \lim_{{\H} \ni z \overset{\angle}{\rightarrow} x} \Phi(z)
= \lim_{\H\ni z\overset{\angle}{\rightarrow} x} e^{f(z)}
= e^{f(x+i0)}
\end{equation*}
exists and is finite.
Moreover, as straightforward consequences of well-known results in the
theory of $H_p$-spaces,  it is established in subsection~\ref{31} below
that (cp. \cite[Lem. 6.2, p. 255]{ost})
\begin{alignat}{2}
& \int\nolimits_{\R}
  \frac{|\Phi(x+i0)|^{{\fo{\pm\delta}}}}{1+x^2}\,dx  < +\infty\,,
& \quad \delta\in(0,1)\,,
& \quad \Phi\in\PL\setminus\{0\}\,, \label{3a} \\
&\int\nolimits_{\R}
  \frac{|f(x+i0)|^{{\fo{p}}}+e^{\, {\fo{\delta|f(x+i0)|}}}}{1+x^2}\;dx < +\infty\,,
& \quad \delta\in(0,1)\,,
& \quad p > 0\,,\quad f\in\log{\mathcal{P}}\,, \label{3b} \\
& \PL \subset H_\delta(\H)\,,\quad
  \log{\mathcal{P}} \subset H_p(\H)\,,
& \quad \delta\in(0,1)\,,
& \quad p > 0\,.\label{3c}
\end{alignat}
It should be noted that  inequality \eqref{3b} cannot be improved to cover
the case $\delta=1$ because $\Phi(z)= z\in\PL\setminus\{0\}$ and
$f(z)=\log z\in\log{\mathcal{P}}$.

Every $\Phi\in\PL\setminus\{0\}$ is an outer function for the class
$H_\delta(\H)$ for arbitrary $\delta\in(0,1)$
(see \cite[Th.11.6, p.193]{dur0}),
as it follows from \eqref{3a}, \eqref{3c} and the Schwarz integral
formula \cite[p.206]{rem} applied to the function
$\log\Phi\in H_1(\H)$, which by \cite[Th.3.1, p.34]{dur0}
can be represented as the Poisson integral
\begin{equation*}
\log\Phi(x+iy) = \frac{1}{\pi} \int_{\R}
\frac{\;y\,\log\Phi(t+i0)\;}{(x-t)^{2} +y^{2}}\;dt\,,\quad
x+iy\in\H\,,\quad\Phi\in\PL\setminus\{0\}\,,
\end{equation*}
of its boundary values $\log\Phi(x+i 0)$.
Thus, in addition to the exponential representation $\Phi=e^{f}$ with
$f=\log\Phi$ given by \eqref{3} with $\pi\rho(t) = \arg\Phi(t+i 0)$,
any $\Phi\in\PL\setminus\{0\}$ has also another one, namely

\vspace{-0.3cm}
\begin{equation*}
\Phi(z)
= \exp\left( \lfrac{1}{\pi i} \int\limits_{-\infty}^{+\infty}
  \left( \lfrac{1}{t-z}-\lfrac{t}{1+t^2} \right)\,
  \log|\Phi(t+i0)|\;dt + i\arg\Phi(i) \right)\,,\quad
  z\in\H\,,\quad \Phi\in\PL\setminus\{0\}\,.
\end{equation*}

\vspace{-0.5cm}
\noindent
It is well-known (see \cite[p.100]{ko}, \cite[Cor.2.6, p.114]{gar})
that if $g\in C(\R)$ is periodic with period $2\pi$, then
\begin{equation}\label{3c0}
\int_{-\pi}^{\,\pi}
e^{\ {\fo{p\cdot|\widetilde{g}(\theta)|}}}\; d\theta < +\infty \quad\text{for all}\ p>0\,,
\end{equation}
where (cp. \cite[p.89]{ko})
\begin{equation}\label{3c1}
\widetilde{g}(\theta)
:= \lim\limits_{0<\varepsilon\to0} \frac{1}{\pi} \int_{\varepsilon }^{\pi}
   \frac{g(\theta+t)-g(\theta-t)}{2\tan\lfrac{t}{2}}\;dt\,.
\end{equation}
Therefore, \eqref{30} and \eqref{30a} yield (compare with \eqref{3b})
\begin{equation}\label{3d}
\int_{\R} \frac{e^{\ {\fo{ q\cdot |f(x+i0)|}}}}{1+x^{2}}\;dx < +\infty\,,\quad q>0\,,
\end{equation}
provided that $f\in\log\mathcal{P}$ and the function $\rho$ corresponding
to $f$ in \eqref{3} satisfies $\rho(\tan\lfrac{x}{2})\in C(\R)$.


\vspace{0.25cm}
\subsection{Special subclasses of ${\mathcal{P}}$}$\phantom{a}$ \\

\vspace{-0.35cm}

It is easily verified (see~\sref{32}) that the power of exponent in the
righthand side of \eqref{1} can be converted to the following
expression
\begin{align}
L_{\mu}(z)
&:= \int_{[0,1]} \log\frac{1}{1-tz}\;d\mu(t) \label{5} \\
& = - \int_{[0,1]} \log\sqrt{1+t^{2}}\,d\mu(t)
    + \int_{1}^{\infty} \left( \frac{1}{t-z} - \frac{t}{1+t^2} \right)\,
      \left(1-\mu(1/t)\right)\;dt\,, \nonumber
\end{align}
where $\mu(x):=\mu\big((-\infty, x)\big)$, $x\in\R$.
Due to \eqref{3}, this means that $L_{\mu}(z)\in\log{\mathcal{P}}$
and hence $\Psi(z)/z\in\PL$ for every universally starlike
function $\Psi$.
Moreover, since $1-\mu(1/t)$ in the righthand side of \eqref{5}
is non-decreasing on $[1,+\infty)$, \eqref{5} singles out, from
the class $\log{\mathcal{P}}$, those functions whose representation
\eqref{3} has only non-decreasing functions $\rho$.
Therefore, the functions $L_{\mu}(z)$ and $\Psi(z)/z$ belong to narrower
classes than $\log\mathcal{P}$ and $\PL$, respectively.
This circumstance leads to the following definitions.

Denote the set of all primitives of the functions in $\mathcal{P}$
as follows
\begin{equation*}
\primP := \ \left\{ f\in\hol\left( \C\setminus\R \right)
\mid f^{\,\prime}(z)\in\mathcal{P}\,,\; f(-i)=\overline{f(i)}\,\right\}\,,
\end{equation*}
and introduce the three following subclasses of ${\mathcal{P}}$,
\begin{gather}
\primP\cap\mathcal{P}
= \big\{ f\in{\mathcal{P}} \mid f^{\,\prime}\in\mathcal{P} \big\}\,, \label{7} \\
\primP\cap\logP = \left\{ f\in\mathcal{P} \mid
e^f\in\mathcal{P}\,,\;f^{\,\prime}\in\mathcal{P} \right\}\,, \label{8} \\
\mathcal{P}_{\log}
:= \left\{ e^f \mid f\in\primP\cap\logP \right\}
=  \left\{ f\in\mathcal{P} \mid \big( \log f \big)^{\,\prime} =
   \frac{f^{\,\prime}}{f}\in\mathcal{P} \right\}\,. \label{9}
\end{gather}
where for the class \eqref{9} the following notations is also used
(cp. \cite[p.18]{D})
\begin{equation*}
\mathcal{P}_{\log}(A):=
\mathcal{P}_{\log}\cap\hol\left( \C\setminus A \right)\,,\quad A\subset\R\,.
\end{equation*}
Observe that each function in any of the classes introduced above is defined
uniquely by its values on the upper half plane, because its values
on the lower half plane have been assigned under a single formula
$f(z)=\overline{f(\overline{z})}$, $z\in-\H$.

In the sequel we  obtain  canonical representations for functions
in the classes $\mathcal{P}\cap\primP$, $\primP\cap\logP$,
$\mathcal{P}_{\log}$ and prove that
$\Psi$ is universally starlike if and only if
\begin{equation*}
\frac{\Psi(z)}{z}\in\mathcal{P}_{\log}\cap\hol_{\hspace{0.025cm}0}(\lm)\,.
\end{equation*}
In Theorem~\ref{th3} below it is proved that for every
$f\in\left( \primP\cap\logP \right)\setminus\DeltaprimP$, where the class
$\DeltaprimP$ of elementary functions is defined in \eqref{2.3},
there exists at least one value of $p>1$ such that
\begin{equation}\label{4ad}
\int_{\R} \exp\big( p\cdot\RE f(x+i0) \big)\;dx < +\infty\,,\quad
p=p(f)>1\,,\quad f\in\left( \primP\cap\logP \right)\setminus\DeltaprimP\,.
\end{equation}
In other words, as it is established in Theorem~\sref{cor3} below,
\begin{equation*}
\mathcal{P}_{\log}\setminus\exp(\DeltaprimP)
\subset\bigcup_{p>1}\mathcal{H}_p(\H)
\end{equation*}
and therefore, every universally starlike function divided by $z$ belongs
to $\cup_{p > 1}\,\mathcal{H}_p (\H)$ provided that it does not belong to
the set $\exp(\DeltaprimP)$ of elementary functions defined in \eqref{2.4}.

To clarify the relationship between inequalities \eqref{4ad}, \eqref{3b} and
\eqref{3d}, observe that
\begin{equation}\label{f1}
\int_{\R} e^{\,{\fo{\pm\delta\cdot\RE f(x+i0)}}}\,dx = +\infty\,,\quad
\delta\in(0,1)\,,\quad f\in\log \mathcal{P}\,,
\end{equation}
(the proof of \eqref{f1} is given in subsection~\sref{31} below).
Furthermore, if
$f\in\left( \primP\cap\log\mathcal{P} \right)\setminus\DeltaprimP$
and $p(f)>1$ is chosen in \eqref{4ad}, then in view of \eqref{th3.3}
\begin{equation*}
e^{\ {\fo{-p\cdot\RE f(x+i0)}}}\notin L_1 \left( \R,\lfrac{dx}{1+x^2}\right)\,,\quad
p\geq p(f)>1\,,\quad
f\in\left( \primP\cap\log\mathcal{P} \right) \setminus \DeltaprimP\,,
\end{equation*}
and the inequality \eqref{4ad} is incompatible with \eqref{3d} for
$q\geq p(f)>1$.
The latter fact follows also from Corollary~\sref{cor1} and \eqref{2.3}
because they show that all functions $\rho$ corresponding by \eqref{3} to
$f\in\left(\primP\cap\log\mathcal{P}\right)\setminus\DeltaprimP$ should be
non-constant and non-decreasing on the whole $\R$ while the property
$\rho\left(\tan\lfrac{x}{2}\right)\in C(\R)$ guaranteeing the validity of
\eqref{3d} means in particular the existence of two finite and equal limits
$\lim_{t \to +\infty} \rho (t) = \lim_{t \to -\infty} \rho (t)$.

\vspace{0.45cm}
\section{Main Results}

\vspace{0.25cm}
In order to formulate the main theorems it is first necessary to introduce
some  notations for the  Lebesgue-Stieltjes measures and integrals.

\vspace{0.25cm}
\subsection[Notation]
{Notation for measures and non-decreasing functions on the real line} \label{21}$\phantom{a}$ \\

\vspace{-0.35cm}

Denote by $M^{\uparrow}(E)$ the family of all non-decreasing on $E \subset \R$
functions $\phi : E \to \R$.
Consider an arbitrary $\phi\in M^{\uparrow}(\R)$ and the Lebesgue-Stieltjes
measure $\omega_{\phi}\in\mathcal{M}^{+}(\R)$ induced by $\phi\,$
\cite[Def. 3.9, p.147]{mp}.
It is worth reminding that at every $x\in\R$ the left- and the right-hand side
limits exist and both are finite:
\begin{equation*}
-\infty < \phi(x-0):= \lim_{t\uparrow x}\phi(t) \leq \phi(x)
\leq \lim_{t\downarrow x}\phi(t)=:\phi (x+0) < +\infty.
\end{equation*}

Moreover, the set of points of discontinuity of $\phi$,
\begin{equation}\label{aux0}
D_{\phi} = \big\{ x\in\R \mid \phi(x-0)<\phi(x+0) \big\}\,,
\quad \phi\in M^{\uparrow}(\R)\,,
\end{equation}
is at most countable \cite[Th.8.19, p.111]{hew}.

The mapping $\mathcal{L_S}$ of $M^{\uparrow }(\R)$ onto $\mathcal{M}^{+} (\R)\,$,
$\phi\mapsto\omega_{\phi}$ (cf. \cite[Th.1.16, p.35]{fo}),
determines a partition of $M^{\uparrow}(\R)$ into disjoint sets
\begin{equation*}
\left\{ \mathcal{L_S}^{-1}(\mu) \mid \mu\in\mathcal{M}^{+}(\R) \right\}
= \left\{ \Gamma_\phi \mid \phi\in M^{\uparrow}(\R) \right\}\,,
\end{equation*}
where
\begin{equation*}
\Gamma_\phi
= \big\{ b+\psi \mid b\in\R\,,\ \psi\in\aleph_\phi \big\}\,,
\end{equation*}
and
\begin{equation}\label{aux10}
\aleph_\phi
= \big\{ \psi\in M^{\uparrow}(\R) \mid
  \psi(x)\in[\phi(x-0),\,\phi(x+0)]\,,\; x\in\R \big\}\,,\qquad
  \phi\in M^{\uparrow}(\R)\,,
\end{equation}
(see \cite[Prop. 3.9(v), p.145]{mp}, \cite[p. 551]{ho}).
We now assume that the closure of $E\subseteq\R$ in $\R$ is equal to $\R$
and $f\in M^{\uparrow}(\R)$.
Then it is  easily  verified that
\begin{equation}\label{aux12}
f(x-0) = \lim_{E\ni\,t\uparrow x} f(t)\,,\qquad
f(x+0) = \lim_{E\ni\,t\downarrow x} f(t)\,,\qquad
x\in\R\,,
\end{equation}
from which we get
\begin{alignat}{2}
\aleph_{\phi}
&=& \big\{ \psi\in M^{\uparrow}(\R) \mid \psi=\phi\
    \text{$m$-a.e. on $\R$}\; \big\}\,, \quad
    & \phi\in M^{\uparrow}(\R)\,, \label{aux4} \\
D_{\psi}
&=& D_{\phi}\,,\quad
    \psi(-\infty)=\phi(-\infty)\,,\quad
    \psi\in\aleph_{\phi}\,,\quad
    &\phi\in M^{\uparrow}(\R)\,, \label{aux11}
\end{alignat}
irrespective of whether $\phi (-\infty):=\lim_{\, t \to -\infty}\phi(t)$
is finite or equal to $-\infty$.
It follows from \eqref{aux10} that for each $\phi\in M^{\uparrow} (\R)$ the
functions in the class $ \aleph_{\,\phi}$ are  equal to each other everywhere except
at most at countable many points in $D_{\phi}$ and (cf. \cite[Ex.2, p.156]{bar})
$ \aleph_{\phi}$ contains a unique function $\psi$  satisfying
\begin{equation}\label{aux2}
\psi(x) = \frac{\psi(x+0) + \psi(x-0)}{2}\,,\quad x\in\R\,.
\end{equation}
If $[a,b]\subset\R$ is a bounded interval and $g$ is an absolutely continuous
function on  $[a,b]$ then the formula of integration by parts
(cf. \cite[p.166]{bar}, \cite[Th.6.2.2,  p.100]{cart},
\cite[Prop.4.9, p.202; Cor.4.3, p.214]{mp})
gives
\begin{equation}\label{5ad}
\int_{[a,b]} g(x)\,d\omega_{\phi}(x) = g(b)\,\phi(b+0) - g(a)\,\phi(a-0)
- \int_{a}^{b} \phi(x)\,g'(x)\,dx\,.
\end{equation}
In the sequel, for any given $\phi\in M^{\uparrow }(\R)$ we  denote the
integral $\int g(x)\,d\omega_{\phi}(x)$ of a function $g$, with respect to
the measure $\omega_{\phi}$ by $\int g(x)\,d\phi(x)$ (see \cite[p.107]{fo}),
and observe  that the value of $\int g(x)\,d\phi(x)$ does not change when
$\phi$ runs over the class $\Gamma_{{\phi}}$.
Furthermore, denote by the same letter $\phi$ the Lebesgue-Stieltjes measure
$\omega_{\phi}$ induced by $\phi\in M^{\uparrow}(\R)$, and as usual, we write
$\phi(A)$, $A\subset\R$, when $\phi$ is understood as a measure and
$\phi(x)$, $x\in\R\cup\{\pm\infty\}$, when $\phi$ is understood as a function.
Under this convention, if $\phi$ is non-negative, then obviously
$\phi(x-0)=\phi(-\infty)+\phi ((-\infty,x))$ for every $x\in\R $.
Each point in $\supp\phi $ is called a {\emph{growing point}} of
$\phi\in M^{\uparrow }(\R)$.

When $\mu\in\mathcal{M}^{+}(\R)$ is given, we also denote by $\mu$ any of
those functions in $\mathcal{L_S^{{\rm{-1}}}} (\mu)\subset M^{\uparrow }(\R)$
which satisfy \eqref{aux2}.
But, if $\mu$ is finite, we additionally assume that $\mu(-\infty) = 0$,
i.e.
\begin{equation*}
2\mu(x) := \mu\big(\,(-\infty,\,x)\big) +
\lim_{\varepsilon\downarrow 0}\mu\big(\,(-\infty,\,x+\varepsilon)\,\big)\,,
\quad x\in\R\,.
\end{equation*}

\vspace{0.25cm}
\subsection{\texorpdfstring{The classes $\primP\cap\mathcal{P}$ and
$\primP\cap\logP$}{The classes (1.15) and (1.16)}} \label{22}$\phantom{a}$ \\

We recall that any harmonic function in $\H$ is analytic in $\H$
(see \cite[Th.1.15, p.31]{hk}) and, in particular, all its partial
derivatives of any order are continuous functions in $\H$.
The following assertion is of independent interest.

\begin{theorem}\label{th1}
Let $V(x,y)$ be a harmonic function in ${\H}$ and
\begin{equation}\label{th1.1}
V(x,y)\geq0\,,\quad V_{x}(x,y)\geq0\,,\quad y>0\,,\quad x\in\R\,.
\end{equation}
Then the following statements hold:

{\rm{\bf{1}}}.
There exist a finite constant $\alpha\geq0$ and a non-negative non-decreasing
function $\nu$ on $\R$ such that
\begin{equation}\label{th1.2}
\int_{1}^{+\infty}\frac{\nu(t)}{t^2}\,dt<+\infty\,,
\end{equation}
and
\begin{equation}\label{th1.3}
V(x,y)
= \alpha y + \int_{-\infty}^{+\infty} \frac{y}{y^2+(x-t)^2}\;\nu(t)\,dt\,,
\quad y>0\,,\quad x\in\R\,.
\end{equation}
Conversely, any function of this form is harmonic in ${\H}$ and satisfies
conditions \eqref{th1.1}.
The representation \eqref{th1.3} is  unique if we identify non-decreasing
functions that are equal almost everywhere on $\R$
{\rm (see (\ref{aux4}), (\ref{aux10}))}.

\smallskip
{\rm{\bf{2}}}. Either
$V_{x}(x,y)>0$ or $V_{x}(x,y)\equiv 0$ for any $y>0$, $x\in\R$,
and the latter case holds if and only if the function $\nu$ in
\eqref{th1.3} is a non-negative constant function $\nu(x)\equiv a\geq0$,
or, what is the same, $V(x,y)=\alpha y + a\cdot\pi$, $y>0$, $x\in\R$,
with some  $\alpha,a\geq0$.
\end{theorem}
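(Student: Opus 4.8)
The plan is to apply the canonical representation \eqref{2} not to $V$ but to its partial derivative $V_x$. Since $V$ is harmonic in $\H$, so is $V_x$, and by \eqref{th1.1} it is non-negative; choosing a harmonic conjugate on the simply connected domain $\H$ and extending by $\Psi(z)=\overline{\Psi(\overline z)}$ produces a Pick function $\Psi\in\mathcal{P}$ with $\IM\Psi=V_x$. Taking imaginary parts in \eqref{2} then gives
\[
V_x(x,y)=\beta y+\int_{-\infty}^{+\infty}\frac{y}{y^2+(x-t)^2}\,d\tau(t),\qquad \beta\geq0,\quad \tau\in\mathcal{M}^{+}(\R),\quad \int_{\R}\frac{d\tau(t)}{1+t^2}<+\infty .
\]
The first point to settle is that $\beta=0$: integrating $V_x(\cdot,y)$ over $[x_0,x]$ produces the term $\beta y\,(x-x_0)$, which would drive $V(x_0,y)\to-\infty$ as $x_0\to-\infty$ and violate $V\geq0$.

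With $\beta=0$, I would reconstruct $V$ by integrating $V_x$ from $-\infty$. Because $V_x\geq0$, the limit $L(y):=\lim_{x_0\to-\infty}V(x_0,y)\geq0$ exists, and Tonelli's theorem (the integrand being non-negative) yields
\[
V(x,y)=L(y)+\int_{-\infty}^{+\infty}\Big(\arctan\tfrac{x-t}{y}+\tfrac{\pi}{2}\Big)\,d\tau(t).
\]
Putting $\nu_{0}(t):=\tau\big((-\infty,t)\big)$, a non-negative non-decreasing function with $\nu_0(-\infty)=0$, an integration by parts of the type \eqref{5ad}, carried out on $[-R,R]$ and then letting $R\to+\infty$ (with antiderivative $\arctan\frac{t-x}{y}$ of the Poisson kernel), converts the last integral into $\int_{\R}\frac{y}{y^2+(x-t)^2}\,\nu_0(t)\,dt$: the endpoint at $-\infty$ drops out since $\nu_0(-\infty)=0$, the endpoint at $+\infty$ reproduces the constant $\tfrac\pi2\tau(\R)$, and finiteness of $V$ forces $\int_{\R}\nu_0(t)/(1+t^2)\,dt<+\infty$, hence \eqref{th1.2}. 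Now $L(y)=V(x,y)-\int_{\R}\frac{y}{y^2+(x-t)^2}\nu_0(t)\,dt$ is harmonic and independent of $x$, so $L''\equiv0$ and $L(y)=\alpha y+b$; since $L(y)\geq0$ for all $y>0$, letting $y\downarrow0$ gives $b\geq0$ and letting $y\to+\infty$ gives $\alpha\geq0$. Replacing $\nu_0$ by $\nu:=\nu_0+b/\pi$ (still non-negative and non-decreasing, and absorbing $b$ because $\int_{\R}\frac{y}{y^2+(x-t)^2}\,dt=\pi$) delivers \eqref{th1.3}.

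For the uniqueness clause I would invoke Fatou's theorem: the non-tangential boundary values obey $V(x,0^{+})=\pi\,\nu(x)$ for almost every $x$, so $\nu$ is determined up to the almost-everywhere identification of \eqref{aux4}, \eqref{aux10}, while $\alpha=\lim_{y\to\infty}V(x,y)/y$ is fixed. The converse is a direct check: $\alpha y$ and any Poisson integral are harmonic, $V\geq0$ is immediate from $\alpha,\nu\geq0$, and reversing the integration by parts gives
\[
V_x(x,y)=\int_{-\infty}^{+\infty}\frac{y}{y^2+(x-t)^2}\,d\nu(t)\geq0,
\]
since $\nu$ is non-decreasing. This identity also proves the dichotomy in part~\textbf{2} at once: the Poisson kernel is strictly positive throughout $\H$, so the integral is either identically zero --- exactly when the Lebesgue--Stieltjes measure $d\nu$ vanishes, i.e. $\nu\equiv a$ is constant and $V(x,y)=\alpha y+a\pi$ --- or strictly positive at every point of $\H$ when $d\nu\neq0$.

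The conceptual core --- passing from the Herglotz measure $\tau$ of $V_x$ to a non-decreasing $\nu$ for $V$ --- is short; I expect the real work to be in the two limiting steps, namely justifying the interchange of integrations and, above all, the vanishing of the endpoint terms in the integration by parts when $\tau(\R)=+\infty$. The decisive fact is that \eqref{th1.2} together with the monotonicity of $\nu$ forces $\nu(t)/t\to0$, which is precisely what makes $\nu_0(t)\arctan\frac{t-x}{y}$ and $\frac{y}{y^2+(x-t)^2}\nu_0(t)$ behave well at infinity; this is the step I would write out in full.
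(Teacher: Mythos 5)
Your argument is correct in substance, but it follows a genuinely different route from the paper's. You apply the Herglotz--Nevanlinna representation \eqref{2} (equivalently \eqref{3.2}) to the non-negative harmonic function $V_x$ rather than to $V$: you kill the linear term $\beta$ using $V\geq 0$ as $x_0\to-\infty$, reconstruct $V$ by Tonelli as $L(y)+\int_{\R}\bigl(\tfrac{\pi}{2}-\arctan\tfrac{t-x}{y}\bigr)\,d\tau(t)$, convert that integral by parts into the Poisson integral of the distribution function $\nu_0(t)=\tau\bigl((-\infty,t)\bigr)$, and identify the $x$-independent harmonic remainder $L(y)=\alpha y+b$. The paper instead keeps the Herglotz measure $\sigma$ of $V$ itself and converts the hypothesis $V_x\geq0$ into the midpoint-convexity inequality \eqref{3.5} for the distribution function $\sigma(x)$, by integrating $V_x$ against tent-shaped test functions, applying Fubini, and letting $y\downarrow0$ with dominated convergence; convexity theory then forces $\sigma$ to be locally Lipschitz, hence absolutely continuous with a non-negative non-decreasing density $\nu$. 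Your route avoids the convexity machinery and the $y\downarrow0$ boundary-limit argument entirely, at the price of integration-by-parts asymptotics at $t=\pm\infty$; it also makes part \textbf{2} immediate, since once $V_x(x,y)=\int_{\R}\frac{y}{y^2+(x-t)^2}\,d\nu(t)$ is available, strict positivity of the kernel gives the dichotomy at once, whereas the paper argues via the mean value property together with the equality case of \eqref{3.5}.

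One point needs repair when you write out the step you deferred: as stated, your justification of the boundary terms is circular, because you invoke \eqref{th1.2} to get $\nu_0(t)/t\to0$, while \eqref{th1.2} is only obtained \emph{after} the integration by parts has produced the Poisson integral of $\nu_0$. The fix is to extract the needed growth bound before integrating by parts: Tonelli and the finiteness of $V(x,y)$ already give
\begin{equation*}
\int_{\R}\Bigl(\tfrac{\pi}{2}-\arctan\tfrac{t-x}{y}\Bigr)\,d\tau(t)<+\infty\,,
\end{equation*}
and since the integrand is $\geq c\,y/t$ for large $t$, this forces $\int_{1}^{+\infty}d\tau(t)/t<+\infty$ (and $\tau\bigl((-\infty,a]\bigr)<+\infty$, so $\nu_0$ is well defined). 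From $\int_{1}^{+\infty}d\tau(t)/t<+\infty$ and monotonicity one gets $\nu_0(R)=o(R)$, e.g.\ via $\nu_0(R)\leq\nu_0(\sqrt{R})+R\int_{[\sqrt{R},R)}d\tau(t)/t$ with $\nu_0(\sqrt{R})=O(\sqrt{R})$; this is the measure-side counterpart of the paper's Lemma~\ref{lem4.1}. With it, \emph{both} endpoint terms in \eqref{5ad} vanish: at $+\infty$ the combined term is $\nu_0(R)\bigl(\tfrac{\pi}{2}-\arctan\tfrac{R-x}{y}\bigr)\approx y\,\nu_0(R)/R\to0$ (it does not ``reproduce $\tfrac{\pi}{2}\tau(\R)$'' as you wrote), and at $-\infty$ one uses $\nu_0(-\infty)=0$. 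After this, finiteness of $V$ yields \eqref{th1.2} exactly as you say, and the remainder of your argument (the identification $L(y)=\alpha y+b$, uniqueness via Fatou, and the converse, where \eqref{th1.2} \emph{is} a hypothesis and Lemma~\ref{lem4.1} applies directly) goes through.
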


\vspace{0.25cm}
Since in all the statements below the subclasses of $\mathcal{P}\setminus\{0\}$
are considered, we introduce the following general notations.

For arbitrary $f \in {\mathcal{P}} \setminus \{0\}$ denote
\begin{equation*}
U^{f}(x,y) := \RE f(x+iy)\,,\quad
V^{f}(x,y) := \IM f(x+iy)\,,\quad x\in\R\,,\quad y>0\,.
\end{equation*}
Let $\mathcal{D}(U^{f})$ ($\mathcal{D}(V^{f})$) be the set of all those
$x\in\R$ where there exists a finite limit $U^{f}(x)$ ($V^{f} (x)$) of
$\RE f(z)$  ($\IM f(z)$) as $z\in\H$ tends to $x$ non-tangentially.
As mentioned in subsection \sref{sec1.2BVPickFus} 
\begin{equation}\label{aux21}
m\left( \R\setminus\mathcal{D}(f) \right)
=  m\left( \R\setminus\mathcal{D}(U^{f}) \right)
=  m\left( \R\setminus\mathcal{D}(V^{f}) \right) = 0\,,\quad
\mathcal{D}(f) := \mathcal{D}(U^{f})\cap\mathcal{D}(V^{f})\,,
\end{equation}
and on the set $\mathcal{D}(f)$ one can define the non-tangential
boundary value of $f$ as follows
\begin{equation*}
f(x+i0) = U^{f}(x)+iV^{f}(x)\,,\quad x\in\mathcal{D}(f)\,.
\end{equation*}

Theorem~\ref{th1} gives rise to a canonical representation of the functions
in the class $\mathcal{P}\cap\primP$ which, in turn,
leads to explicit expressions for their boundary values.

\vspace{0.25cm}
\begin{theorem}\label{th2}\
Let $\primP \cap \mathcal{P}$ be defined as in \eqref{7}.

\smallskip
{\rm{\textbf{ 1.}}}\
A function $\Phi$ belongs to  the class $\primP\cap\mathcal{P}$
if and only if there exist constants $\alpha\geq0$, $\beta\in\R$ and a
non-negative non-decreasing function $\nu$ on $\R$ satisfying
\begin{equation}\label{th2.2}
\int_{1}^{+\infty} \frac{\nu(t)}{t^2}\,dt < + \infty \ ,
\end{equation}
such that
\begin{equation} \label{th2.1}
\Phi(z) = \alpha z + \beta + \int_{-\infty}^{+\infty}
\left( \frac{1}{t-z} - \frac{t}{1+t^2} \right)\,\nu(t)\,dt\,,\quad
z\in\C\setminus\R\,,
\end{equation}
or, what is the same,
\begin{equation}\label{th2.3}
\Phi(z) = \alpha z + \beta + i\pi\nu(-\infty) +
\int_{-\infty}^{+\infty}\log\frac{\sqrt{1+t^2}}{t-z}\,d\nu(t)\,,\quad z\in\H\,,
\end{equation}
where the principal branch of the logarithm is taken and the integral in
\eqref{th2.3} converges absolutely since
\begin{equation}\label{th2.4}
    \int_{1}^{+\infty}   \frac{d \, \nu (t)}{t}\  < + \infty \ ,
\end{equation}
as  it follows from \eqref{th2.2}.
The representations \eqref{th2.1} and \eqref{th2.3} are unique
if we identify the non-decreasing functions that are equal almost everywhere
on $\R$ {\rm (see \eqref{aux10}, \eqref{aux4}, \eqref{aux11} )}.

\smallskip
{\rm {\textbf{2.}}}\
If $\Phi\in\mathcal{P}\cap\primP$, then for all $y>0$ and $x\in\R$ the
following equalities hold

\vspace{-0.3cm}
\begin{align}
U^{\Phi}(x,y)
&= \alpha x + \beta + \int_{-\infty}^{+\infty}
   \left( \frac{t-x}{y^2+(x-t)^2}-\frac{t}{1+t^2} \right)\,\nu(t)\,dt
   \label{th2.5} \\
&= \alpha x + \beta  +
   \int_{-\infty}^{+\infty} \log\sqrt{\frac{1+t^2}{y^2+(t-x)^2}}\,d\nu(t)\,,
   \nonumber \\[0.3cm]
V^{\Phi} (x,y)
&= \alpha y + \int_{-\infty}^{+\infty} \frac{y}{y^2+(x-t)^2}\,\nu(t)\,dt
   \label{th2.6} \\
&= \alpha y + \pi\nu(-\infty) + \int_{-\infty}^{+\infty}
   \left( \frac{\pi}{2} -\arctan\frac{t-x}{y} \right)\,d\nu(t)\,.
   \nonumber
\end{align}

\vspace{0.1cm} \noindent
For almost all $x\in \R$ we have
$\log\lfrac{\sqrt{1+t^2}}{|t-x|}\in L_1(\R,d\nu(t))$
{\rm (cf. \cite[Lem. 2, p.86]{tw})} and
\begin{equation}\label{th2.7}
U^{\Phi}(x) = \alpha x + \beta  +
\int_{-\infty}^{+\infty} \log \frac{\sqrt{1 + t^2}}{|t-x|} \ d  \nu (t) \ .
\end{equation}
The limit of $V^{\Phi}(x,y)$ as $y\to 0$, $y>0$
{\rm (in short: $y\downarrow 0$)}, exists and is finite for every
$x\in\R$ and
\begin{equation}\label{th2.8}
V^{\Phi}(x,0+0) := \lim_{y \downarrow 0} V^{\Phi}(x,y) = \pi\nu(x)\ ,
\end{equation}
provided that $\nu$ satisfies \eqref{aux2}.
In addition, $\mathcal{D}(V^{\Phi})\subset\R\setminus D_{\nu}$
{\rm (see \eqref{aux0}, \eqref{aux21})}.
\end{theorem}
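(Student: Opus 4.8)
The plan is to derive everything from Theorem~\ref{th1} by observing that the two defining conditions of $\primP\cap\mathcal{P}$ translate exactly into the hypotheses \eqref{th1.1} on the imaginary part. Writing $\Phi=U^{\Phi}+iV^{\Phi}$ and using the Cauchy--Riemann equations, one has $\Phi'=U^{\Phi}_x+iV^{\Phi}_x$, so $\IM\Phi'=V^{\Phi}_x$. Hence $\Phi\in\primP\cap\mathcal{P}$ is equivalent to $V^{\Phi}\geq0$ (from $\Phi\in\mathcal{P}$) together with $V^{\Phi}_x\geq0$ (from $\Phi'\in\mathcal{P}$) throughout $\H$, which are precisely the conditions \eqref{th1.1}. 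Applying Theorem~\ref{th1} to $V:=V^{\Phi}$ produces the constant $\alpha\geq0$ and the non-negative non-decreasing $\nu$ satisfying \eqref{th2.2} for which \eqref{th1.3} holds; this is exactly \eqref{th2.6}. Since the right-hand side of \eqref{th2.1} is holomorphic on $\C\setminus\R$ with imaginary part equal to \eqref{th1.3} (a direct computation of $\IM\tfrac{1}{t-z}$), it differs from $\Phi$ by a purely real additive constant on $\H$, and fixing that constant determines $\beta$ uniquely; the uniqueness of $\nu$ up to the identifications \eqref{aux4}, \eqref{aux10}, \eqref{aux11} is inherited from Theorem~\ref{th1}. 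For the converse I would invoke the converse part of Theorem~\ref{th1}: any admissible $\nu$ yields a harmonic $V$ obeying \eqref{th1.1}, and the function \eqref{th2.1} is then Pick (imaginary part $V\geq0$) with Pick derivative (imaginary part $V_x\geq0$), while the reflection symmetry $f(-i)=\overline{f(i)}$ required for $\primP$ is automatic from the real coefficients in \eqref{th2.1}.

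Next I would establish the equivalent form \eqref{th2.3} and the integrability \eqref{th2.4} by integration by parts. Since $\tfrac{d}{dt}\log\tfrac{t-z}{\sqrt{1+t^2}}=\tfrac{1}{t-z}-\tfrac{t}{1+t^2}$, integrating \eqref{th2.1} by parts against $\nu$ converts the $\nu(t)\,dt$--integral into a $d\nu(t)$--integral. The only delicate point is the boundary contribution at $\pm\infty$: as $t\to+\infty$ the primitive is $O(1/t)$ while $\nu(t)=o(t)$ (the latter from \eqref{th2.2}, since monotonicity gives $\nu(T)/(2T)\leq\int_T^{2T}\nu(t)t^{-2}\,dt\to0$), so that endpoint vanishes, whereas as $t\to-\infty$ the principal branch gives $\log\tfrac{t-z}{\sqrt{1+t^2}}\to-i\pi$ for $z\in\H$, producing the term $i\pi\nu(-\infty)$ in \eqref{th2.3}. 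The same monotonicity estimate inserted into $\int_1^T t^{-1}\,d\nu=\nu(T)/T-\nu(1)+\int_1^T\nu(t)t^{-2}\,dt$ yields \eqref{th2.4} and hence the absolute convergence in \eqref{th2.3}. Formulas \eqref{th2.5} then follow by taking real parts of \eqref{th2.1} and \eqref{th2.3}, using $\RE\log\tfrac{\sqrt{1+t^2}}{t-z}=\log\sqrt{\tfrac{1+t^2}{(t-x)^2+y^2}}$, and the second line of \eqref{th2.6} follows from $\IM\log\tfrac{\sqrt{1+t^2}}{t-z}=\tfrac{\pi}{2}-\arctan\tfrac{t-x}{y}$ together with $\IM\bigl(i\pi\nu(-\infty)\bigr)=\pi\nu(-\infty)$.

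For the boundary values I would treat $V^{\Phi}$ and $U^{\Phi}$ separately. For \eqref{th2.8} I would pass to the limit $y\downarrow0$ in the $\arctan$--form of \eqref{th2.6}: the integrand $\tfrac{\pi}{2}-\arctan\tfrac{t-x}{y}$ lies in $[0,\pi]$, tends to $\pi$, $\tfrac{\pi}{2}$, $0$ according as $t<x$, $t=x$, $t>x$, and for large $t$ is dominated uniformly for small $y$ by a multiple of $1/(t-x)$, which is $d\nu$--integrable by \eqref{th2.4}; dominated convergence then gives $V^{\Phi}(x,0+0)=\pi\nu\bigl((-\infty,x)\bigr)+\tfrac{\pi}{2}\nu(\{x\})+\pi\nu(-\infty)=\tfrac{\pi}{2}\bigl(\nu(x-0)+\nu(x+0)\bigr)$, which equals $\pi\nu(x)$ under the normalization \eqref{aux2}, valid for every $x\in\R$. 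The inclusion $\mathcal{D}(V^{\Phi})\subset\R\setminus D_{\nu}$ would follow by noting that at a jump point $x_0\in D_{\nu}$ (see \eqref{aux0}) the contribution $\nu(\{x_0\})\bigl(\tfrac{\pi}{2}-\arctan\tfrac{x_0-x}{y}\bigr)$ approaches a value depending on the angle of approach, so no non-tangential limit of $V^{\Phi}$ can exist there.

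I expect the real-part boundary value \eqref{th2.7} to be the main obstacle, since one must justify both the finiteness of the logarithmic potential $\int\log\tfrac{\sqrt{1+t^2}}{|t-x|}\,d\nu(t)$ for almost every $x$ and the interchange of $\lim_{y\downarrow0}$ with the integral in the $d\nu$--form of \eqref{th2.5}. I would control the potential by a Fubini argument: integrating $\bigl|\log\tfrac{\sqrt{1+t^2}}{|t-x|}\bigr|$ in $x$ over a bounded interval gives a bound uniform in $t$ once $\int_1^{+\infty}t^{-1}\,d\nu<\infty$ from \eqref{th2.4} is used, so the potential is locally integrable and hence finite almost everywhere (this is the content of the cited \cite[Lem.~2, p.86]{tw}). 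Given almost-everywhere finiteness, splitting the integrand into the parts near and far from $t=x$ and combining monotone convergence on the singular part with dominated convergence on the remainder yields the non-tangential limit \eqref{th2.7}, whose almost-everywhere existence is consistent with \eqref{aux21}.
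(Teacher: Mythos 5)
Your proposal is correct, and its overall skeleton is the same as the paper's: item 1 is reduced to Theorem~\ref{th1} via $\IM\Phi'=V^{\Phi}_x$, the form \eqref{th2.3} and the bound \eqref{th2.4} come from integration by parts using the endpoint asymptotics $\log\frac{t-z}{\sqrt{1+t^2}}\to 0$ (as $t\to+\infty$) and $\to -i\pi$ (as $t\to-\infty$) together with $\nu(t)=o(t)$, and \eqref{th2.8} with $\mathcal{D}(V^{\Phi})\subset\R\setminus D_{\nu}$ follows from dominated convergence in the $\arctan$-form of \eqref{th2.6}, the atom at $x$ contributing an angle-dependent term exactly as in the paper's computation $\lim_{y\downarrow0}V^{\Phi}(x+\theta y,y)=\pi\nu(x)+\nu(\{x\})\arctan\theta$. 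Two of your sub-arguments genuinely differ from the paper's, and both are valid. First, your direct estimate $\nu(T)/(2T)\le\int_T^{2T}\nu(t)t^{-2}\,dt\to0$, which yields $\nu(T)/T\to0$ and then \eqref{th2.4}, is simpler than the paper's Lemma~\ref{lem4.1}, whose item (2) is proved by a subsequence/contradiction argument. Second, and more substantially, for \eqref{th2.7} the paper fixes $x\in\mathcal{D}(U^{\Phi})$ (a set of full measure by \eqref{aux21}), notes that the integrals in \eqref{th2.5} are then bounded above because they converge to the finite limit $U^{\Phi}(x)$, and obtains both $\log\frac{\sqrt{1+t^2}}{|t-x|}\in L_1(\R,d\nu(t))$ and the formula in one stroke from Beppo Levi's theorem; you instead establish the $d\nu$-integrability of the logarithmic kernel for almost every $x$ first, by a Fubini argument (essentially reproving the cited lemma of Twomey), and then compute the limit by a near/far splitting, with monotone convergence near the singularity and dominated convergence away from it. Your route is more self-contained, since it does not lean on the a priori finiteness of the boundary limit, but note that as written it produces only the vertical limit of $U^{\Phi}(x,y)$; to obtain \eqref{th2.7} as stated, i.e. for the non-tangential limit $U^{\Phi}(x)$, you must intersect your full-measure set with $\mathcal{D}(U^{\Phi})$ from \eqref{aux21}, on which the non-tangential limit exists and hence coincides with the vertical one. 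In other words, the phrase ``consistent with \eqref{aux21}'' should read ``combined with \eqref{aux21}''; with that wording tightened, your argument is complete.
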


\vspace{0.25cm}
Some important particular cases of functions from the class
$\mathcal{P}\cap\primP$ are considered in the following corollary.


\vspace{0.25cm}
\begin{corollary}\label{corr1}
The first derivative of any function $\Phi\in\mathcal{P}\cap\primP$
with integral representation \eqref{th2.3} can be expressed as:
\begin{equation}\label{th2.9}
\Phi^{\, \prime}(z)
= \alpha + \int_{-\infty}^{+\infty} \frac{d\nu(t)}{t-z}\,,\quad z\in\H\,.
\end{equation}
and the following assertions hold:
\begin{itemize}
\item[(i)]
If $\Phi$ is a constant function, then $\alpha=0$,
$\nu(x) \equiv \nu(-\infty) \geq 0$ and thus,\\
$\Phi(z) \equiv \beta + i\pi\cdot\nu(-\infty)$,
$\IM\Phi(z) \equiv \pi \cdot \nu(-\infty) \geq 0$,
$\Phi^{\,\prime}(z)\equiv 0$.\vspace{0.2cm}
\item[(ii)]
If $\nu$ is a constant function, i.e. $\nu(x)\equiv\nu(-\infty)\geq 0$,
then $\Phi(z) = \alpha z + \beta  +  i\pi\cdot\nu(-\infty)$ and
$\Phi^{\,\prime} (z) \equiv \alpha$, $\IM\Phi^{\,\prime}(z) \equiv 0$.\vspace{0.2cm}
\item[(iii)]
If $\Phi$ is not a linear function, then $\IM\Phi(z)>0$ and
$\IM\Phi^{\,\prime}(z)>0$ for each $z\in\H$.
\end{itemize}
\end{corollary}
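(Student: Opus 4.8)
The plan is to read off \eqref{th2.9} by differentiating the canonical representation of Theorem~\ref{th2}, and then to reduce all three items to the Pick-function dichotomy recorded after \eqref{2aa} together with part \textbf{2} of Theorem~\ref{th1}.

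First I would establish \eqref{th2.9}. Starting from the representation \eqref{th2.3}, one differentiates under the integral sign using
\[
\frac{\partial}{\partial z}\,\log\frac{\sqrt{1+t^2}}{t-z} = \frac{1}{t-z}\,,\qquad z\in\H\,.
\]
To legitimise this it suffices to check that $\int_{\R}|t-z|^{-1}\,d\nu(t)$ converges locally uniformly for $z\in\H$: on $\{|t|\le R\}$ one has $|t-z|^{-1}\le(\IM z)^{-1}$ while the $\nu$-mass of a bounded set is finite, and on $\{|t|>R\}$ the bound $|t-z|^{-1}\le C/|t|$ together with \eqref{th2.4} (right tail, $t\to+\infty$) and the finiteness of $\nu$ near $-\infty$ (left tail) controls the integral. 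Equivalently, one may differentiate \eqref{th2.1} to get $\alpha+\int_{\R}\nu(t)(t-z)^{-2}\,dt$ and then integrate by parts via \eqref{5ad}, the boundary contributions vanishing since $\nu(t)/t\to0$ as $t\to+\infty$ (a consequence of \eqref{th2.2}) and $\nu(t)$ stays bounded as $t\to-\infty$. This yields \eqref{th2.9}.

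The decisive observation is that the Cauchy--Riemann equations give $\IM\Phi^{\,\prime}(z)=V^{\Phi}_{x}(x,y)$ for $z=x+iy\in\H$, so the behaviour of $\IM\Phi^{\,\prime}$ is governed by Theorem~\ref{th1}. Indeed $V^{\Phi}=\IM\Phi\ge0$ because $\Phi\in\mathcal{P}$, and $V^{\Phi}_{x}=\IM\Phi^{\,\prime}\ge0$ because $\Phi^{\,\prime}\in\mathcal{P}$ (this is precisely the condition $\Phi\in\primP$); hence $V^{\Phi}$ meets the hypotheses \eqref{th1.1}, and the non-decreasing function representing it in \eqref{th1.3} is exactly the $\nu$ appearing in \eqref{th2.6}. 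Part \textbf{2} of Theorem~\ref{th1} then asserts that $\IM\Phi^{\,\prime}\equiv V^{\Phi}_{x}$ is either strictly positive throughout $\H$ or identically zero, the latter holding if and only if $\nu$ is constant.

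With these tools the three items follow quickly. For (iii), ``$\Phi$ not linear'' forces $\nu$ non-constant (otherwise (ii) would make $\Phi$ affine), so Theorem~\ref{th1}.\textbf{2} gives $\IM\Phi^{\,\prime}>0$ on $\H$; moreover $\Phi$ is then non-constant, hence not a real constant, so the dichotomy after \eqref{2aa} yields $\IM\Phi>0$ on $\H$. For (ii), if $\nu\equiv\nu(-\infty)$ then $d\nu=0$, the integrals in \eqref{th2.3} and \eqref{th2.9} vanish, and one reads off $\Phi(z)=\alpha z+\beta+i\pi\nu(-\infty)$ and $\Phi^{\,\prime}\equiv\alpha\in\R$, whence $\IM\Phi^{\,\prime}\equiv0$. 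For (i), ``$\Phi$ constant (on $\H$)'' means $\Phi^{\,\prime}\equiv0$, so $V^{\Phi}_{x}\equiv0$ and Theorem~\ref{th1}.\textbf{2} forces $\nu$ constant; then \eqref{th2.6} gives $V^{\Phi}=\alpha y+\pi\nu(-\infty)$, which is constant only if $\alpha=0$, and substituting back into \eqref{th2.3} produces $\Phi\equiv\beta+i\pi\nu(-\infty)$ with $\IM\Phi\equiv\pi\nu(-\infty)\ge0$ and $\Phi^{\,\prime}\equiv0$. The only genuinely technical point is the justification of the differentiation under the integral (equivalently the integration by parts), where the two tails of $d\nu$ must be handled separately; everything else is bookkeeping, provided one keeps in mind that here ``constant'' refers to constancy on $\H$, the values on $-\H$ being fixed by $\Phi(z)=\overline{\Phi(\overline z)}$.
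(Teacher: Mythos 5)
Your proposal is correct, and its core---the derivation of \eqref{th2.9}---is essentially the paper's: the paper computes the difference quotient of \eqref{th2.3} and passes to the limit by dominated convergence, using the bound $\frac{1}{|\varepsilon|}\bigl|\log\left(1-\varepsilon/(t-z)\right)\bigr|\le 2/|t-z|\in L_1(\R,d\nu)$ guaranteed by \eqref{th2.4}; your differentiation under the integral sign (or your integration-by-parts variant of \eqref{th2.1}) rests on exactly the same tail estimates, namely \eqref{th2.4} on the right and the finiteness of the mass of $d\nu$ near $-\infty$ on the left. Where you genuinely diverge is in items (i) and (iii). The paper proves (i) by a uniqueness argument: any admissible constant $a+ib$, $b\ge 0$, is represented by \eqref{th2.3} with $\alpha=0$, $\beta=a$, $\nu\equiv b/\pi$, and uniqueness of the representation leaves no other choice of $(\alpha,\beta,\nu)$. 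For (iii) the paper argues directly from the representations: $\nu$ non-constant means $\supp\nu\neq\emptyset$, so the integrals $\int_{\R}\left(\frac{\pi}{2}-\arctan\frac{t-x}{y}\right)d\nu(t)$ and $\int_{\R}\frac{y}{y^2+(x-t)^2}\,d\nu(t)$, having strictly positive integrands against a nonzero measure, are strictly positive, which gives $\IM\Phi>0$ and $\IM\Phi'>0$ at once. You instead route both items through the dichotomy of Theorem~\ref{th1}, part 2, via the identity $\IM\Phi'=V^{\Phi}_x$ and the uniqueness statement that identifies the $\nu$ of \eqref{th1.3} with the $\nu$ of \eqref{th2.6}, and you invoke the Pick dichotomy recorded after \eqref{2aa} to get $\IM\Phi>0$. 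Both arguments are sound: your packaging is shorter and reuses already-established dichotomies, while the paper's is more self-contained, exhibiting positivity explicitly from the kernels instead of through the mean-value-property argument hidden inside Theorem~\ref{th1}, and obtaining (i) by pure representation-matching rather than by a separate analytic step.
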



It follows from \cite[Th. 2, p.143]{kats} and \eqref{th2.9} that:
\begin{equation*}
\int_{1}^{+\infty}\frac{\IM\Phi^{\,\prime}(it)}{t}\,dt<+\infty\quad
\text{for every $\Phi\in\mathcal{P}\cap\primP$}\,.
\end{equation*}

A direct application of Theorem~\sref{th2}, with the additional
restriction that $\Phi\in\log{\mathcal{P}}\subset\mathcal{P}$,
yields a canonical representation of all functions in
$\primP\cap\log\mathcal{P}\subset\primP\cap\mathcal{P}$,
with an explicit expressions for their boundary values.


\vspace{0.2cm}
\begin{theorem}\label{cor1}
Let  $\primP\cap\log\mathcal{P}$ be defined as in \eqref{8}.
Every function $f$ in the class $\primP\cap\log\mathcal{P}$
admits a canonical representation of the form
\begin{equation}\label{cor1.2}
f(z) = \beta + \int_{-\infty}^{+\infty}
\left( \frac{1}{t-z} - \frac{t}{1+t^2} \right)\,\nu(t)\,dt\,,\quad
z\in\C\setminus\R\,,
\end{equation}
or, what is the same,
\begin{equation}\label{cor1.3}
f(z) = \beta + i\pi\nu(-\infty) + \int_{-\infty}^{+\infty}
\log\frac{\sqrt{1+t^2}}{t-z}\,d\nu(t)\,,\quad z\in\H\,,
\end{equation}
where the principal branch of the logarithm is taken, $\beta\in\R$
and $\nu$ is a  non-decreasing function on $\R$ satisfying
\begin{equation}\label{cor1.1}
0\leq\nu(x_1)\leq\nu(x_2)\leq1\,,\quad x_1<x_2\,,\quad x_1,x_2\in\R\,.
\end{equation}
Conversely, any function of the form \eqref{cor1.2} or \eqref{cor1.3}
with a non-decreasing function $\nu $ on $\R$ satisfying
\eqref{cor1.1} belongs to $\primP\cap\log\mathcal{P}$.
The representations \eqref{cor1.2} and \eqref{cor1.3} are unique
if we identify the non-decreasing functions that are equal almost
everywhere on $\R$ {\rm (see \eqref{aux10}, \eqref{aux4}, \eqref{aux11})}.

For almost all $x\in\R$ we have
$\log\lfrac{\sqrt{1+t^2}}{|t-x|}\in L_1(\R, d\nu(t))$ and
\begin{equation}\label{cor1.6}
U^{f}(x) = \beta + \int_{-\infty}^{+\infty}
\log\frac{\sqrt{1+t^2}}{|t-x|}\,d\nu(t)\,.
\end{equation}
Whenever $\nu$ satisfies \eqref{aux2}, then
\begin{equation}\label{cor1.7}
V^{f}(x, 0+0) := \lim_{y\downarrow 0} V^{f}(x,y) = \pi\nu(x)\,,\quad
\text{for every $x\in\R$}\,.
\end{equation}
Furthermore, for $f\in\primP\cap\log\mathcal{P}$ the following holds:
\begin{itemize}
\item[(i)]
The function $f$ is identically constant if and only if $\nu$ is a
constant function.\vspace{0.1cm}
\item[(ii)]
If $f$ is a constant function, then $f(z)=\beta+i\pi\theta$ and
$\nu(x)\equiv\theta$, where $\beta\in\R$ and $\theta\in[0,1]$.\vspace{0.1cm}
\item[(iii)]
If $f$ is a non-constant function, then $\IM f(z)>0$ and $\IM f'(z)>0$
for each $z\in\H$.
\end{itemize}
\end{theorem}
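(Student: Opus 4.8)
The plan is to derive Theorem~\ref{cor1} as a specialization of Theorem~\ref{th2}, since $\primP\cap\log\mathcal{P}$ is, by \eqref{8}, a subclass of $\primP\cap\mathcal{P}$. First I would take an arbitrary $f\in\primP\cap\log\mathcal{P}$ and apply Theorem~\ref{th2}, part \textbf{1}, to obtain constants $\alpha\geq0$, $\beta\in\R$ and a non-negative non-decreasing $\nu$ satisfying \eqref{th2.2} for which $f$ has the representation \eqref{th2.1}. The whole task then reduces to showing that the extra membership $f\in\log\mathcal{P}$ forces $\alpha=0$ and upgrades the one-sided condition \eqref{th2.2} on $\nu$ to the two-sided bound \eqref{cor1.1}.

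To do this I would compare \eqref{th2.1} with the canonical representation \eqref{3} of $f$ as an element of $\log\mathcal{P}$, which carries no linear term and an $L_\infty$ density $\rho$ with $0\leq\rho\leq1$ almost everywhere. Both \eqref{th2.1} and \eqref{3} are instances of the Nevanlinna representation \eqref{2}, in which the triple $(\alpha,\beta,\sigma)$ is uniquely determined by $f$; here the measure $\sigma$ is absolutely continuous in both cases, with densities $\nu$ and $\rho$ respectively. Uniqueness therefore gives $\alpha=0$ and $\nu=\rho$ almost everywhere, so the non-decreasing representative $\nu$ coincides $m$-a.e. with a function bounded in $[0,1]$. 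Since $\nu$ is normalized by \eqref{aux2}, I would read off the pointwise bound $0\leq\nu(x)\leq1$ for every $x\in\R$ by evaluating the one-sided limits $\nu(x\pm0)$ along the full-measure set $\{\,0\leq\rho\leq1\,\}$ via \eqref{aux12}, together with $\nu(x-0)\leq\nu(x)\leq\nu(x+0)$. This establishes \eqref{cor1.1} and, with $\alpha=0$, turns \eqref{th2.1} and \eqref{th2.3} into \eqref{cor1.2} and \eqref{cor1.3}; the boundary-value formulas \eqref{cor1.6} and \eqref{cor1.7} then follow from \eqref{th2.7} and \eqref{th2.8} by setting $\alpha=0$.

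The converse is the reverse of this comparison: given \eqref{cor1.2} with $\nu$ non-decreasing and satisfying \eqref{cor1.1}, boundedness of $\nu$ makes \eqref{th2.2} automatic, so the converse half of Theorem~\ref{th2} yields $f\in\primP\cap\mathcal{P}$, while reading \eqref{cor1.2} as \eqref{3} with $\rho=\nu\in[0,1]$ and invoking the converse characterization of $\log\mathcal{P}$ stated after \eqref{3} gives $f\in\log\mathcal{P}$; hence $f\in\primP\cap\log\mathcal{P}$. Finally I would settle (i)--(iii): uniqueness of the representation gives that $f$ is constant precisely when $\nu$ is constant, in which case $f=\beta+i\pi\nu(-\infty)$ with $\nu(-\infty)=\theta\in[0,1]$, which is (i)--(ii); and when $f$ is non-constant it is not a linear function (as $\alpha=0$), so Corollary~\ref{corr1}(iii) applies and yields $\IM f>0$ and $\IM f'>0$ on $\H$, which is (iii). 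The only genuinely delicate point in the whole argument is the passage from the almost-everywhere identity $\nu=\rho$ to the everywhere bound $0\leq\nu\leq1$, which is exactly why the normalization \eqref{aux2} and the limit identities \eqref{aux12} are needed.
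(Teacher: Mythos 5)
Your proposal is correct and follows essentially the same route as the paper: both specialize Theorem~\ref{th2} to $f\in\primP\cap\log\mathcal{P}$, compare the resulting representation \eqref{th2.1} with the $\log\mathcal{P}$ representation \eqref{3}, and invoke uniqueness of the Nevanlinna representation \eqref{2} to force $\alpha=0$ and $\nu=\rho$ almost everywhere, which yields \eqref{cor1.1} and reduces everything else (including items (i)--(iii), via Corollary~\ref{corr1} after excluding non-constant linear functions) to statements already proved. Your explicit treatment of the passage from the a.e.\ identity $\nu=\rho$ to the everywhere bound $0\leq\nu\leq1$ via \eqref{aux2} and \eqref{aux12} is simply a spelled-out version of what the paper leaves implicit in its identification of non-decreasing functions equal almost everywhere.
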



\vspace{0.25cm}
It obviously follows from \eqref{cor1.2} and \eqref{cor1.3} that
for arbitrary $f\in\primP\cap\log\mathcal{P}$, $y > 0$
and $x\in\R$ the following equalities hold:
\begin{align*} 
U^{f}(x, y)
&= \beta + \int_{-\infty}^{+\infty}
   \left( \frac{t-x}{y^2+(x-t)^2} - \frac{t}{1+t^2} \right)\,\nu(t)\,dt \\
&= \beta + \int_{-\infty}^{+\infty}
   \log\sqrt{\frac{1+t^2}{y^2+(t-x)^2}}\,d\nu(t)\,, \\
V^{f}(x,y)
&= \int_{-\infty}^{+\infty} \frac{y}{y^2+(x-t)^2}\,\nu(t)\,dt \\
&= \pi\nu(-\infty) + \int_{-\infty}^{+\infty}
   \left( \frac{\pi}{2} - \arctan\frac{t-x}{y} \right)\,d\nu(t)\,.
\end{align*}
The following fact follows easily from Theorem~\sref{cor1}.


\vspace{0.25cm}
\begin{corollary}\label{cor2}
$f\in\primP\cap\log\mathcal{P}\cap\hol ((-\infty,1))$
if and only if there exists some $\mu\in\mathcal{M}^{+}(\R)$
with $\supp\mu\subseteq[0,1]$ and $\mu(\R)=1$, such that
\begin{equation}\label{cor2.1}
f(z) = \int_{[0, 1]} \log\frac{1}{1-tz}\,d\mu(t)\,, \quad z\in\H\,.
\end{equation}
If $f$ is identically constant in \eqref{cor2.1},
then $f\equiv 0$, $\supp\mu = \{0\}$ and $\mu(\{0\})=1$.
\end{corollary}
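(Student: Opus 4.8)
The plan is to deduce the corollary directly from the canonical representation in Theorem~\ref{cor1}, the holomorphy criterion \eqref{a2}, and the explicit identity \eqref{5}, the bridge between the two sides being the change of variables $s=1/t$, which converts a non-decreasing density supported on $[1,+\infty)$ into a probability distribution on $[0,1]$.

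For sufficiency, suppose $f=L_{\mu}$ with $\mu\in\mathcal{M}^{+}(\R)$, $\supp\mu\subseteq[0,1]$, $\mu(\R)=1$. The identity \eqref{5} already exhibits $L_{\mu}$ in the form \eqref{cor1.2}, with $\beta=-\int_{[0,1]}\log\sqrt{1+t^2}\,d\mu(t)$ and $\nu(t)=\big(1-\mu(1/t)\big)\,\mathbf 1_{[1,+\infty)}(t)$. Since $s\mapsto\mu(s)$ is non-decreasing with values in $[0,1]$, the map $t\mapsto 1-\mu(1/t)$ is non-decreasing on $[1,+\infty)$ with values in $[0,1]$, so $\nu$ satisfies \eqref{cor1.1}; the converse part of Theorem~\ref{cor1} then gives $f\in\primP\cap\log\mathcal{P}$. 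Finally $f$ is holomorphic on $\C\setminus[1,+\infty)$, because $1-tz\neq0$ for every $t\in[0,1]$ once $z\notin[1,+\infty)$, and integration against the finite measure $\mu$ preserves holomorphy; in particular $f\in\hol((-\infty,1))$.

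For necessity, let $f\in\primP\cap\log\mathcal{P}\cap\hol((-\infty,1))$. Theorem~\ref{cor1} supplies the representation \eqref{cor1.2} with a non-decreasing $\nu$, $0\le\nu\le1$, and the measure attached to $f$ by \eqref{2} is $d\sigma(t)=\nu(t)\,dt$. As $f\in\mathcal{P}\cap\hol((a,1))$ for every $a<1$, criterion \eqref{a2} forces $\supp\sigma\subseteq\R\setminus(a,1)$ for all such $a$, hence $\supp\sigma\subseteq[1,+\infty)$; thus $\nu=0$ almost everywhere on $(-\infty,1)$, and since $\nu$ is non-decreasing this means $\nu(t)=0$ for $t<1$ and $\nu(-\infty)=0$. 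I then define $\mu$ on $(0,1]$ by $\mu(s):=1-\nu(1/s)$, set $\mu\equiv0$ on $(-\infty,0)$ and $\mu\equiv1$ on $(1,+\infty)$; monotonicity of $\nu$ makes $\mu$ non-decreasing, so it induces a measure with $\supp\mu\subseteq[0,1]$, and the boundary values $\mu(0-0)=0$, $\mu(1+0)=1$ give $\mu(\R)=1$. By construction $\nu(t)=1-\mu(1/t)$ for $t\ge1$, so comparing \eqref{cor1.2} with \eqref{5} the integral parts coincide; matching the additive constants by evaluation at $z=0$, where $L_{\mu}(0)=0$ forces $\beta=-\int_{[0,1]}\log\sqrt{1+t^2}\,d\mu$, yields $f=L_{\mu}$, i.e.\ \eqref{cor2.1}. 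The constant case follows from the final assertions (i)--(ii) of Theorem~\ref{cor1}: if $f$ is constant then $\nu$ is constant, and $\nu\equiv0$ on $(-\infty,1)$ forces $\nu\equiv0$, whence $f\equiv0$, $\mu=\delta_0$, so $\supp\mu=\{0\}$ and $\mu(\{0\})=1$.

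The part resting on Theorem~\ref{cor1} and \eqref{a2} is routine; the delicate point, and the main obstacle, is the endpoint bookkeeping in the correspondence $\nu\leftrightarrow\mu$. One must track the two normalization conventions \eqref{aux2} (once for $\nu$, once for $\mu$) and verify that the atoms of $\mu$ at the endpoints $0$ and $1$ — which correspond to $t=+\infty$ and $t=1$ and are therefore invisible to the absolutely continuous integral in \eqref{5} — can be assigned so that $\mu$ is a genuine probability measure on $[0,1]$. Concretely, the atom at $0$ is forced to have mass $1-\nu(+\infty)$ by the limiting density, whereas the atom at $1$ may be chosen as $\nu(1+0)$ to bring the total mass to exactly $1$; checking that these assignments are compatible with \eqref{aux2} and with $\supp\mu\subseteq[0,1]$ is the only step requiring genuine care.
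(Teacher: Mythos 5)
Your proof is correct and follows essentially the same route as the paper's: Theorem~\ref{cor1} plus criterion \eqref{a2} to force $\nu=0$ on $(-\infty,1)$, the substitution $t\mapsto 1/t$ with the same endpoint bookkeeping (mass $1-\nu(+\infty)$ at $0$, mass $\nu(1+0)$ at $1$), your use of the identity \eqref{5} being equivalent to the paper's Lebesgue--Stieltjes change of variables applied directly to \eqref{cor1.3}. One caveat, inherited rather than introduced: your constant-matching step (``evaluation at $z=0$ forces $\beta=-\int_{[0,1]}\log\sqrt{1+t^2}\,d\mu(t)$'') tacitly uses $f(0)=0$, which is not among the corollary's stated hypotheses --- adding a nonzero real constant to $f$ keeps it in $\primP\cap\log\mathcal{P}\cap\hol\big((-\infty,1)\big)$ but destroys \eqref{cor2.1} --- yet the paper's own proof makes exactly the same tacit assumption, so this is a defect of the statement (which is only correct under that normalization, cf.\ Corollary~\ref{cor4}, where $\varphi(0)=1$ is imposed explicitly) rather than a gap specific to your argument.
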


\vspace{0.1cm}
According to the definition \eqref{8} and Theorem~\sref{cor1}
a non-constant  function $f$ belongs to the class
$\primP\cap\log\mathcal{P}$ {\em if and only if\/}
for every $x+iy\in\H$, $0<V^{f}(x,y)<\pi$ and $V_x^f(x,y)>0$.
But it follows from $V_x^f= -U_y^f$ that the latter inequality is equivalent
to $U_y^f(x,y)<0$, $x+iy\in\H$.
Thus,  a non-constant function $f$ belongs to the class
$\primP\cap\log\mathcal{P}$ {\em if and only if\/} any one
(and therefore all) of the following conditions hold:
\begin{equation}\label{a1}\hspace{-0.5cm}
\left\{
\begin{array}{llll}
(a) & 0<V(x_1,y)<V(x_2,y)< \pi\,,
    & -\infty\!<\!x_1\!<\!x_2\!<\!+\infty\,,
    & y\!>\!0\,, \\[0.3cm]
(b) & U(x,y_2)<U(x,y_1)\,,\ 0<V(x,y)<\pi\,,
    & 0\!<\!y_1\!<\!y_2\!<\!+\infty\,,
    & y\!>\!0\,,\ x\!\in\!\R\,.
\end{array}
\right.\hspace{-0.15cm}
\end{equation}
Indeed, the inequalities in the left-hand side of \eqref{a1}, (a) and (b),
follow obviously from the inequalities $V_x^f(x,y)>0$ and $U_y^f(x,y)<0$,
respectively.
Conversely, \eqref{a1}, (a) and (b), imply that the harmonic functions
$V_x^f(x,y)$ and $-U_y^f(x,y)$ are non-constant and non-negative,
which means that they are strictly positive in $\H$ in view of the mean
value property of harmonic functions \cite[(1.5.4), p.34]{hk}.

An exceptional position in the class $\primP\cap\log\mathcal{P}$
is played only by those functions whose measure $\nu$ given by
\eqref{cor1.3} have a support consisting of at most  one point.

If $\nu$ in \eqref{cor1.2} is a constant function (i.e., $\nu(\R)= 0$
for the corresponding measure $\nu$) then by \eqref{cor1.3}
$f\equiv\beta+i\pi\nu(-\infty)$ is a constant function as well,
where $\nu(-\infty)\in[0,1]$ by \eqref{cor1.1}.
If $\text{supp}\,\nu$ consists of only one point $a\in\R$ and
$\theta:=\nu(\R)\in(0,1]$, then obviously $\nu(-\infty)\in[0,1-\theta]$
and we can introduce a number $\theta_1\in[0,1]$ in order to have
$\nu(-\infty)=\theta_1\cdot(1-\theta)$ and
\begin{equation*}
\nu_{\theta_1,a,\theta}(x) =
\begin{cases}
\theta_1\,(1-\theta)\,, & x< a\,, \\[0.3cm]
\theta_1\,(1-\theta)+{\theta}/{2}\,, & x=a\,, \\[0.3cm]
\theta_1\,(1-\theta)+\theta\,, & x > a\,,
\end{cases}
\qquad\text{where $\theta\in(0,1]$ and $\theta_1\in[0,1]$}\,.
\end{equation*}
If here we put $\theta=0$, we get a constant function
$\nu_{{\fo{\theta_1, a, 0}}}(x)\equiv\theta_1 = \nu(-\infty)$.
By \eqref{cor1.3}, the measure $\nu_{{\fo{\theta_1, a, \theta}}}$ corresponds to the
function of the following form
\begin{equation*}
f_{{\fo{\theta_1, a, \theta}}}(z)
= \beta + i\pi\theta_1\,(1-\theta) +
  \theta\,\log\frac{\sqrt{1+a^2}}{a-z}\,,
\end{equation*}
and so we can define an exceptional class of functions in
$\primP\cap\log\mathcal{P}$ as follows
\begin{align}
\Delta_{\primP}
&= \left\{
   \beta + i\pi\theta_1\,(1-\theta) +
   \theta\cdot\log\frac{\sqrt{1+a^2}}{a-z} \ \, \Big| \ \,
   \theta,\theta_1\in[0,1]\,,\  a,\beta\in\R\,\right\} \label{2.3} \\
&= \left\{
   \beta + i\pi\theta_1\cdot(1-\theta) +
   \theta\cdot\log\frac{1}{a-z} \ \, \Big| \ \,
   \theta,\theta_1\in[0,1]\,,\  a,\beta\in\R\,\right\}\,. \nonumber
\end{align}
Our key result is contained in the following theorem.


\vspace{0.25cm}
\begin{theorem}\label{th3}
Let $\primP\cap\log \mathcal{P}$ and $\Delta_{\primP}$ be defined as in
\eqref{8} and \eqref{2.3}, respectively.
Let furthermore $f\in\primP \cap \log\mathcal{P}$, $\nu$ be a measure
corresponding to $f$ in the integral representation \eqref{cor1.3} and\vspace{-0.2cm}
\begin{equation}\label{th3.1}
J_{\nu}(A):=\sup_{x\in A}\nu\left(\left\{x\right\}\right)\,,\quad
A\subseteq\R\,.
\end{equation}
Then the support of $\nu$ consists of at least two different points
if and only if
$f\in\left(\primP\cap\log\mathcal{P}\right)\setminus\Delta_{\primP}$.
In this case
$J_\nu(\R)<\nu(\R)\leq1$ and for every $0<y_1<y_2<+\infty$ and
${1}/{\nu(\R)}<p<{1}/{J_{\nu}(\R)}$, the following relations
hold
\begin{align}
&\hspace{-0.3cm} \int\limits_{-\infty}^{+\infty}
\exp\left( p\cdot U^f(x,y_2) \right)\,dx
< \int\limits_{-\infty}^{+\infty}
  \exp\left( p\cdot U^f(x,y_1) \right)\,dx
< \int\limits_{-\infty}^{+\infty}
  \exp\left( p\cdot U^f(x) \right)\,dx
< +\infty\,, \label{th3.2} \\[0.3cm] 
&\hspace{-0.3cm}  \exp\left( -p\cdot U^f(x,y)\right)\,,\, \exp\left( -p\cdot U^f(x)\right)
\notin L_1\left(\R,\frac{dx}{1+x^2}\right)\,,\quad y>0\,,\quad
p>\frac{1}{\nu(\R)}\geq1\,. \label{th3.3}
\end{align}
\end{theorem}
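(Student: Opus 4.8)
The plan is to settle the two qualitative assertions first (the dichotomy with $\DeltaprimP$ and the mass inequality $J_{\nu}(\R)<\nu(\R)\le 1$), then to reduce the whole analytic content of \eqref{th3.2}--\eqref{th3.3} to a single integrability statement for the boundary function $U^{f}(x)$, and finally to prove that statement by separating local and global scales. For the dichotomy I would invoke Theorem~\ref{cor1}: $f$ is determined (up to the normalisation of $\nu$) by its non-decreasing $\nu$ with $0\le\nu\le 1$, and the computation preceding \eqref{2.3} identifies $\DeltaprimP$ with exactly those $f$ for which $\supp\nu$ is empty or a single point; hence $\supp\nu$ has at least two points iff $f\in(\primP\cap\logP)\setminus\DeltaprimP$. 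The bound $\nu(\R)\le 1$ is \eqref{cor1.1}. Since $\nu$ is finite its atomic masses are summable, so $J_{\nu}(\R)=\sup_{x}\nu(\{x\})$ is either $0$ or attained at some atom $a$; choosing a second support point $b\ne a$ and a neighbourhood of $b$ omitting $a$ produces mass beyond $\nu(\{a\})$, giving $J_{\nu}(\R)<\nu(\R)$. Thus $(1/\nu(\R),\,1/J_{\nu}(\R))$ is a non-empty interval of values $p>1/\nu(\R)\ge 1$.

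Next I would write $\Phi:=e^{f}\in\mathcal{P}_{\log}$, so that $\exp(p\,U^{f}(x,y))=|\Phi(x+iy)|^{p}$ and \eqref{th3.2} becomes the assertion that $\Phi\in\mathcal{H}_p(\H)$ together with a monotone ordering of the line integrals. Because $f$ is non-constant, \eqref{a1}(b) shows $U^{f}(\cdot,y)$ is strictly decreasing in $y$, hence $\exp(p\,U^{f}(x,y))\uparrow\exp(p\,U^{f}(x))$ as $y\downarrow 0$; monotone convergence then yields the strict chain in \eqref{th3.2} and reduces everything to proving $\int_{\R}\exp(p\,U^{f}(x))\,dx<+\infty$. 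Throughout I would keep the combined kernel of \eqref{cor1.3}, i.e. work with $U^{f}(x)=\beta+\int\log\big(\sqrt{1+t^{2}}/|t-x|\big)\,d\nu(t)$, since $\int\log\sqrt{1+t^{2}}\,d\nu$ may diverge while the combined integral converges.

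The core estimate I would prove by separating scales. Near any point $x_{0}$ I would choose $\delta$ so small that $m_{0}:=\nu((x_{0}-2\delta,x_{0}+2\delta))<1/p$, which is possible because $\nu(\{x_{0}\})\le J_{\nu}(\R)<1/p$; splitting the kernel at $|t-x_{0}|=2\delta$, the far part stays bounded on $(x_{0}-\delta,x_{0}+\delta)$, while Jensen's inequality for the probability measure $\nu_{\mathrm{loc}}/m_{0}$ bounds the near part by a constant multiple of $\int|t-x|^{-pm_{0}}\,d\nu_{\mathrm{loc}}(t)$, which is integrable in $x$ because $pm_{0}<1$. Globally, the condition $\int_{1}^{\infty}d\nu(t)/t<+\infty$ forces $\int_{|t|\le|x|/2}\log\sqrt{1+t^{2}}\,d\nu=o(\log|x|)$, so off a small exceptional set $U^{f}(x)=-\nu(\R)\log|x|+o(\log|x|)$; this gives $\exp(p\,U^{f}(x))\lesssim|x|^{-p\nu(\R)+\varepsilon}$, integrable at infinity since $p\nu(\R)>1$. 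The same two-sided asymptotics yield, for \eqref{th3.3}, $\exp(-p\,U^{f}(x))\gtrsim|x|^{p\nu(\R)-\varepsilon}$ on a set of infinite measure, whence $\int|x|^{p\nu(\R)-\varepsilon}/(1+x^{2})\,dx=+\infty$ as soon as $p\nu(\R)>1$; the height-$y$ versions follow because $U^{f}(x,y)<U^{f}(x)$ makes the integrands in \eqref{th3.3} even larger.

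The step I expect to be the main obstacle is the passage to infinity, where the local requirement $p\cdot(\text{local mass})<1$ and the decay requirement $p\cdot\nu(\R)>1$ pull in opposite directions, so no single global convexity inequality suffices when $\nu$ carries infinitely many atoms or has unbounded support. I would resolve this by a dyadic decomposition over $|x|\in[2^{k},2^{k+1}]$: the mass of $\nu$ within unit distance of such a block tends to $0$ and is eventually below $1/p$, so a single Jensen step per block gives a bound of order $2^{k(1-p\nu(\R)+o(1))}$, which is summable precisely because $p\nu(\R)>1$, while the finitely many intermediate scales are absorbed by the local argument. The exceptional set entering the global asymptotics is controlled by the observation that $x\mapsto\int_{|t-x|<1}\log(1/|t-x|)\,d\nu(t)$ lies in $L_{1}(\R)$, together with a Chebyshev estimate, which simultaneously secures the divergence claimed in \eqref{th3.3}.
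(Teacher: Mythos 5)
For the dichotomy, the mass inequality $J_{\nu}(\R)<\nu(\R)\leq 1$, the reduction of \eqref{th3.2} via \eqref{a1} to the finiteness of $\int_{\R}e^{p\,U^{f}(x)}\,dx$, and that finiteness itself, your proposal coincides with the paper's proof: your Jensen step for the normalized local measure $\nu_{\mathrm{loc}}/m_{0}$ is exactly the paper's generalized H\"older inequality (Lemma~\ref{lem32}), your local estimate under $p\,m_{0}<1$ is its Lemma~\ref{l6.1}, compact sets are absorbed by the same finite-subcover argument, and your dyadic blocks at infinity with one Jensen step per block and decay exponent $p\nu(\R)+o(1)>1$ reproduce the paper's decomposition over $[a,2a]$, $a=4A\cdot 2^{n}$, where the decay $(2A/a)^{p\,\sigma((-A,A))}$ is extracted from the mass on a fixed interval $(-A,A)$ chosen with $p\,\sigma((-A,A))>1$ (the paper treats $x\to-\infty$ by the reflected measure $\widehat{\nu}(t)=\nu(+\infty)+\nu(-\infty)-\nu(-t)$; you elide this symmetric step). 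The one genuinely different piece is \eqref{th3.3}. The paper needs no boundary asymptotics at all: if $e^{-p\,U^{f}}\in L_{1}\left(\R,dx/(1+x^{2})\right)$ for some $p\in\left(1/\nu(\R),\,1/J_{\nu}(\R)\right)$, then \eqref{3.1.1} with $y=e^{-p\,U^{f}(x)}$ gives
\begin{equation*}
4\log(1+\Lambda)\;\leq\;\int_{\R}e^{\,p\,U^{f}(x)}\,dx\;+\;\int_{-\Lambda}^{\Lambda}\frac{e^{-p\,U^{f}(x)}}{(1+|x|)^{2}}\,dx\,,
\end{equation*}
which contradicts \eqref{th3.2} as $\Lambda\to+\infty$; all $p>1/\nu(\R)$ then follow from the inclusion $L_{q}\subset L_{1}$ for the finite measure $dx/(1+x^{2})$, and the interior versions from \eqref{a1}(b), just as in your last sentence. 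You instead prove $e^{-p\,U^{f}(x)}\gtrsim |x|^{p\nu(\R)-\varepsilon}$ off an exceptional set controlled by Chebyshev applied to $x\mapsto\int_{|t-x|<1}\log(1/|t-x|)\,d\nu(t)\in L_{1}(\R)$. That route is workable but more expensive: besides the near singularity you must also control the mid-range mass $|t|\asymp|x|$ (via $\nu(\{|t|>|x|/2\})=o(1)$) to secure the upper bound on $U^{f}$, and the phrase ``on a set of infinite measure'' should be sharpened to ``on a fixed positive fraction of each dyadic annulus,'' which is what your per-annulus Chebyshev estimate actually delivers and what divergence of the integral requires when $1<p\nu(\R)<2$. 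What your version buys is explicit quantitative decay of $U^{f}$ at infinity; what the paper's contradiction trick buys is brevity and complete independence from such asymptotics.
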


\vspace{0.25cm}
\subsection{\texorpdfstring{The class $\mathcal{P}_{\log}$}
           {The class (1.17)}} \label{22b}$\phantom{a}$ \\

\vspace{-0.15cm}
The next characteristic properties of the functions in the class
$\mathcal{P}_{\log}$ follow immediately from \eqref{9} and \eqref{a1}.

\vspace{0.25cm}
\begin{theorem}\label{pr1}
A non-constant function $\varphi\in\mathcal{P}$ belongs to $\mathcal{P}_{\log}$
iff one of the following conditions holds:
\begin{align*}
(1) & \qquad
\big| \varphi(x+iy_1) \big| > \big| \varphi(x+iy_2) \big|\,,
\quad x\in\R\,,\quad 0 < y_1 < y_2 < +\infty\,; \\[0.2cm]
(2) & \qquad
\arg\varphi(x_1+iy) < \arg\varphi(x_2+iy)\,,
\quad y>0\,,\quad -\infty < x_1 < x_2 < +\infty\,; \\[0.2cm]
(3) & \qquad
\begin{vmatrix}
U^{\varphi}(x_1,y) & U^{\varphi}(x_2,y) \\ V^{\varphi}(x_1,y) & V^{\varphi}(x_2,y)
\end{vmatrix}
> 0\,,\quad y>0\,,\quad -\infty < x_1 < x_2 < +\infty\,; \\[0.2cm]
(4) & \qquad
U^\varphi(x,y)\, U_y^\varphi(x,y) +  V^\varphi(x,y)\, V_y^\varphi(x,y)
< 0\,,\quad x\in\R\,,\quad y > 0\,.
\end{align*}
\end{theorem}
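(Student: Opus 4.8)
The plan is to reduce everything to the logarithm $f:=\log\varphi$ and then read off the four conditions from the already-established description \eqref{a1} of the class $\primP\cap\logP$. Since $\varphi\in\mathcal{P}$ is non-constant, the known fact (see \cite[p.31]{berg}) gives $\IM\varphi(z)>0$ for every $z\in\H$; hence $\varphi$ is zero-free on $\H$, so $f=\log\varphi\in\logP$ is well defined with $\varphi=e^{f}$, and the values of $f$ lie in the strip $0<\IM f<\pi$. By \eqref{9}, $\varphi\in\mathcal{P}_{\log}$ if and only if $\varphi'/\varphi=f'\in\mathcal{P}$, that is, if and only if $f\in\primP\cap\logP$; moreover $f$ is non-constant precisely when $\varphi$ is. Recording that $U^{f}=\RE f=\log|\varphi|$ and $V^{f}=\IM f=\arg\varphi\in(0,\pi)$, I would then invoke \eqref{a1}, which states that for non-constant $f$ the membership $f\in\primP\cap\logP$ is equivalent to each of its conditions (a) and (b).

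Condition \eqref{a1}(a) reads $0<V^{f}(x_{1},y)<V^{f}(x_{2},y)<\pi$ for $x_{1}<x_{2}$; the two outer bounds are automatic from $\varphi(\H)\subset\H$, so the sole content is the strict monotonicity $\arg\varphi(x_{1}+iy)<\arg\varphi(x_{2}+iy)$, which is exactly statement $(2)$. Likewise condition \eqref{a1}(b) reduces, after discarding the automatic bound $0<V^{f}<\pi$, to $U^{f}(x,y_{2})<U^{f}(x,y_{1})$, i.e. $|\varphi(x+iy_{1})|>|\varphi(x+iy_{2})|$ for $0<y_{1}<y_{2}$, which is statement $(1)$. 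This already yields $\varphi\in\mathcal{P}_{\log}\Leftrightarrow(1)\Leftrightarrow(2)$.

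For statement $(3)$ I would pass to polar form $\varphi(x_{j}+iy)=r_{j}e^{i\theta_{j}}$ with $r_{j}>0$ and $\theta_{j}\in(0,\pi)$, and compute the $2\times2$ determinant: writing $U^{\varphi}=r\cos\theta$, $V^{\varphi}=r\sin\theta$ one finds
\begin{equation*}
U^{\varphi}(x_{1},y)\,V^{\varphi}(x_{2},y)-U^{\varphi}(x_{2},y)\,V^{\varphi}(x_{1},y)=r_{1}r_{2}\sin(\theta_{2}-\theta_{1}).
\end{equation*}
Since $r_{1}r_{2}>0$ and $\theta_{2}-\theta_{1}\in(-\pi,\pi)$, the determinant is positive exactly when $\theta_{2}>\theta_{1}$, i.e. exactly when $(2)$ holds; thus $(3)\Leftrightarrow(2)$. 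For statement $(4)$ I would use $\varphi_{y}=i\varphi'$ together with $U^{\varphi}U_{y}^{\varphi}+V^{\varphi}V_{y}^{\varphi}=\RE(\overline{\varphi}\,\varphi_{y})$ to obtain
\begin{equation*}
U^{\varphi}(x,y)\,U_{y}^{\varphi}(x,y)+V^{\varphi}(x,y)\,V_{y}^{\varphi}(x,y)=-\,|\varphi(x+iy)|^{2}\,\IM\frac{\varphi'(x+iy)}{\varphi(x+iy)},
\end{equation*}
so $(4)$ is equivalent to $\IM(\varphi'/\varphi)>0$ throughout $\H$, hence to $\varphi'/\varphi\in\mathcal{P}$ being non-constant, i.e. to $\varphi\in\mathcal{P}_{\log}$ (a constant value of $\varphi'/\varphi$ would force $\varphi=Ce^{\alpha z}$, which is not a non-constant Pick function).

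The only computations are the two displayed identities, both routine. The point requiring care is the interplay of strict and non-strict inequalities: I must make sure that the strict inequalities in $(1)$–$(4)$ correspond to the non-constant alternative in \eqref{a1}, which is precisely where the mean-value argument quoted after \eqref{a1} (forcing a non-negative non-constant harmonic function to be strictly positive) does the work, and that the automatic strip bounds $0<\arg\varphi<\pi$ are correctly invoked so that $(1)$ and $(2)$ capture \eqref{a1}(b) and \eqref{a1}(a) verbatim.
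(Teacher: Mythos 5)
Your proposal is correct and takes essentially the same route as the paper: both reduce to $f=\log\varphi$, identify $\varphi\in\mathcal{P}_{\log}$ with $f\in\primP\cap\logP$, read conditions (1) and (2) off \eqref{a1}(b) and \eqref{a1}(a) using the automatic bounds $0<V^{f}<\pi$, and obtain (3) from the identity $U^{\varphi}(x_1,y)V^{\varphi}(x_2,y)-U^{\varphi}(x_2,y)V^{\varphi}(x_1,y)=e^{U^{f}(x_1,y)+U^{f}(x_2,y)}\sin\bigl(V^{f}(x_2,y)-V^{f}(x_1,y)\bigr)$, which is exactly your polar-form computation. The only (minor) divergence is condition (4): the paper writes $U^{\varphi}U^{\varphi}_y+V^{\varphi}V^{\varphi}_y=\tfrac12\,\partial_y e^{2U^{f}}$ and matches it against the monotonicity in \eqref{a1}(b), while you write it as $-|\varphi|^2\,\IM(\varphi'/\varphi)$ and close via the dichotomy for Pick functions applied to $\varphi'/\varphi$ (ruling out the constant case $\varphi=Ce^{\alpha z}$); since $U^{f}_y=-\IM(\varphi'/\varphi)$ these are the same identity, and both ways of finishing are valid.
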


\smallskip
According to the definition \eqref{9}, the corresponding exceptional class
of functions in $\mathcal{P}_{\log}$ can be written in the following manner
\begin{equation}\label{2.4}
\exp(\Delta_{\primP})
= \left\{
  \frac{ \exp\left( \beta+i\pi\theta_1\,(1-\theta) \right)}{(a-z)^{\theta}}
  \ \Big| \ \theta,\theta_1\in[0,1]\,,\quad a,\beta\in\R\,\right\}\,.
\end{equation}
In the sequel, for arbitrary function $v\in\bigcup_{p>1} L_p(\R,dx)$
we  use of the notation
\begin{equation}\label{2.5}
\widetilde{v}(x):=
\frac{1}{\pi} \cdot \lim_{\varepsilon\downarrow 0}
\int_{\varepsilon}^{+\infty} \frac{v(x+t)-v(x-t)}{t}\; dt
= \frac{1}{\pi} \cdot P\int_{-\infty}^{+\infty} \frac{v(t)\;dt}{t-x}\;,
\end{equation}
where $\ds P\int_{\R}\frac{v(t)}{t-x}\,dt$ denotes the  principal value
of the integral, defined as
$\ds\lim_{\varepsilon\downarrow 0}\int_{|t-x|>\varepsilon}
\frac{v(t)}{t-x}\,dt\,$.
The following assertion is a simple consequence of Theorem~\sref{th3}
and Theorem~\sref{cor1}.

\vspace{0.25cm}

\vspace{0.25cm}
\begin{theorem}\label{cor3}
Let $\mathcal{P}_{\log}$, $\exp(\Delta_{\primP})$ and ${J}_{\nu}(\R)$ be
defined as in \eqref{9}, \eqref{2.4} and  \eqref{th3.1}, respectively.
Then:

\smallskip
{\rm {\textbf{1.}}}
Every function $\varphi$ in the class $\mathcal{P}_{\log}$ admits an
exponential representations of the form

\vspace{-0.2cm}
\begin{align}
\varphi(z)
&= \exp\left[ \beta + \int_{-\infty}^{+\infty}
   \left( \frac{1}{t-z} - \frac{t}{1+t^2} \right)\,
   \nu(t)\,dt\, \right] \label{cor3.1} \\[0.25cm]
&= \exp\left[ \beta + i\pi\nu(-\infty) + \int_{-\infty}^{+\infty}
   \log\frac{\sqrt{1+t^2}}{t-z}\,d\nu(t)\,\right]\,, \quad
   z\in\H\,, \nonumber
\end{align}

\vspace{0.2cm} \noindent
where the principal branch of the logarithm is taken, $\beta\in\R$ and
$\nu$ is a non-decreasing function on $\R$ satisfying
$0\leq\nu(x_1)\leq\nu(x_2)\leq 1$ and $-\infty<x_1<x_2<+\infty$.

Conversely, any function of one of the forms \eqref{cor3.1} with
$\nu\in M^{\uparrow}(\R)$ satisfying \eqref{cor1.1} belongs to
$\mathcal{P}_{\log}$.

The representations \eqref{cor3.1} are unique if we identify
non-decreasing functions that are equal almost everywhere on
$\R$ {\rm{(}}see \eqref{aux4}, \eqref{aux10}, \eqref{aux11}{\rm{)}}.

For almost all $x\in\R$ we have
$\log\lfrac{\sqrt{1+t^2}}{|t-x|}\in L_1(\R,d\nu(t))$ and
\begin{equation}\label{cor3.6}
\varphi(x+i0)
= \exp\left( \beta + \int_{-\infty}^{+\infty}
  \log\lfrac{\sqrt{1+t^2}}{|x-t|}\,d\nu(t) + i\pi\nu(x) \right)\,.
\end{equation}

\smallskip
{\rm {\textbf{2.}}}
Let $\varphi\in\mathcal{P}_{\log}\setminus\exp(\Delta_{\primP})$ and $\nu$
be a measure corresponding to $\varphi$ in the righthand side of the
exponential representation in \eqref{cor3.1}.
Then $J_{\nu}(\R) < \nu(\R) \leq 1$ and for every
${1}/{\nu(\R)}<p<{1}/{J_{\nu}(\R)}$ and $0<y_1<y_2<+\infty$
the following inequalities hold

\vspace{-0.2cm}
\begin{equation*}
\int_{-\infty}^{+\infty} |\varphi(x+iy_2)|^p\,dx
< \int_{-\infty}^{+\infty} |\varphi(x+iy_1)|^p\,dx
< \int_{-\infty}^{+\infty} |\varphi(x+i0)|^p\,dx
< +\infty
\end{equation*}
and
\begin{equation}\label{cor3.8}
\varphi(x+i0)
= \widetilde{v}(x) + iv(x) \quad\text{\rm $m$-a.e. on $\R$}\,,
\end{equation}
where $\widetilde{v}(x),v(x)\in L_{p}(\R,dx)$, $v(x):= V^{\varphi}(x)$
and $\widetilde{v}$ is defined by formula \eqref{2.5}.
In particular,

\vspace{-0.2cm}
\begin{equation*}
\mathcal{P}_{\log}\setminus\exp(\Delta_{\primP})\subset
\bigcup_{p > 1} \mathcal{H}_p(\H)\,.
\end{equation*}
\end{theorem}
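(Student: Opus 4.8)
The plan is to obtain both parts of Theorem~\ref{cor3} directly by exponentiating the canonical representation furnished by Theorem~\ref{cor1} and by transcribing the analytic estimates of Theorem~\ref{th3}. The starting observation is that, by definition \eqref{9}, exponentiation is a bijection of $\primP\cap\logP$ onto $\mathcal{P}_{\log}$: for $\varphi\in\mathcal{P}_{\log}$ the function $f:=\log\varphi\in\primP\cap\logP$ is uniquely recovered (the principal branch in $\H$ forces $\IM f\in(0,\pi)$ for non-constant $f$ by \eqref{a1}(a), which rules out the ambiguity by integer multiples of $2\pi i$), and under this bijection the exceptional set $\Delta_{\primP}$ of \eqref{2.3} is carried precisely onto $\exp(\Delta_{\primP})$ of \eqref{2.4}. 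Hence $\varphi\in\mathcal{P}_{\log}\setminus\exp(\Delta_{\primP})$ if and only if $f=\log\varphi\in(\primP\cap\logP)\setminus\Delta_{\primP}$.

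For Part~1, writing $\varphi=e^{f}$ and inserting the two forms of $f$ given in \eqref{cor1.2}--\eqref{cor1.3} yields at once the two exponential representations \eqref{cor3.1}, with the same non-decreasing $\nu$ satisfying \eqref{cor1.1}; the converse and the uniqueness statement are inherited verbatim from the corresponding assertions of Theorem~\ref{cor1}. The boundary formula \eqref{cor3.6} follows by taking non-tangential limits: since $\varphi(x+i0)=e^{f(x+i0)}=\exp(U^{f}(x)+iV^{f}(x))$ for almost all $x$, it suffices to substitute $U^{f}(x)$ from \eqref{cor1.6} and $V^{f}(x,0+0)=\pi\nu(x)$ from \eqref{cor1.7}, the integrability $\log\lfrac{\sqrt{1+t^2}}{|t-x|}\in L_1(\R,d\nu(t))$ being granted by Theorem~\ref{cor1}.

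For Part~2, fix $\varphi\in\mathcal{P}_{\log}\setminus\exp(\Delta_{\primP})$ and put $f=\log\varphi$. By the bijection above and Theorem~\ref{th3}, the support of $\nu$ has at least two points, whence $J_{\nu}(\R)<\nu(\R)\leq1$. For $x\in\R$ and $y>0$ one has the pointwise identity $|\varphi(x+iy)|^{p}=\exp(p\,U^{f}(x,y))$, and $|\varphi(x+i0)|^{p}=\exp(p\,U^{f}(x))$ for almost all $x$. Substituting these into \eqref{th3.2} for any $p$ with $1/\nu(\R)<p<1/J_{\nu}(\R)$ and $0<y_1<y_2<+\infty$ produces the three displayed inequalities together with their finiteness. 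Because $\nu(\R)\le1$ gives $1/\nu(\R)\ge1$, every such $p$ exceeds $1$; moreover the integrals $\int_{\R}|\varphi(x+iy)|^{p}\,dx$ increase as $y\downarrow0$ and stay bounded by $\int_{\R}|\varphi(x+i0)|^{p}\,dx<+\infty$, so $\sup_{y>0}\int_{\R}|\varphi(x+iy)|^{p}\,dx<+\infty$. This is exactly the membership $\varphi\in\mathcal{H}_{p}(\H)$, and since such a $p>1$ always exists, $\mathcal{P}_{\log}\setminus\exp(\Delta_{\primP})\subset\bigcup_{p>1}\mathcal{H}_{p}(\H)$.

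The one step that needs genuine input from $H_{p}$-theory rather than mere substitution is the boundary decomposition \eqref{cor3.8}. Having placed $\varphi$ in $\mathcal{H}_{p}(\H)$ with $p>1$, its boundary function lies in $L_{p}(\R)$, so $v:=V^{\varphi}\in L_{p}(\R)$; by the M.\,Riesz theorem its conjugate $\widetilde{v}$, given by \eqref{2.5}, also lies in $L_{p}(\R)$. The remaining point is to identify $\RE\varphi(x+i0)$ with $\widetilde{v}(x)$. I would argue this through the Cauchy (equivalently Poisson plus conjugate Poisson) integral representation of an $\mathcal{H}_{p}(\H)$ function with $p>1$ in terms of its boundary values, which exhibits $\varphi$ as an analytic function whose real and imaginary boundary parts are conjugate functions; the normalization built into \eqref{2.5} is chosen so that the resulting identity reads $\RE\varphi(x+i0)=\widetilde{v}(x)$ $m$-a.e., giving \eqref{cor3.8}. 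This is the only place where care with the sign convention and with the precise form of the recovery formula is required; everything else is a direct transcription of Theorems~\ref{cor1} and~\ref{th3}.
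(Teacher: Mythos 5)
Your proposal is correct and follows essentially the same route as the paper: Part 1 is read off from Theorem~\ref{cor1} via exponentiation, Part 2 is obtained by applying Theorem~\ref{th3} to $f=\log\varphi$ together with the identity $|\varphi(x+iy)|^p=\exp\bigl(p\,U^{f}(x,y)\bigr)$, and \eqref{cor3.8} is settled by the M.~Riesz conjugate-function theorem, which is exactly the paper's citation. Your added remarks (the injectivity of $f\mapsto e^{f}$ on $\primP\cap\logP$ and the monotone-in-$y$ bound giving $\sup_{y>0}\int_{\R}|\varphi(x+iy)|^p\,dx<+\infty$) only make explicit what the paper leaves implicit.
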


We observe, that the righthand sides of \eqref{cor3.1} and \eqref{cor3.6}
can be written as 
\begin{align*}
U^{\varphi}(x,y)\!
&= \!\!\left[ \cos\left(\! \pi\nu(-\infty) + \!\! \int\limits_{-\infty}^{+\infty}
   \left( \scalebox{1.0}{$\frac{\pi}{2}-\arctan\frac{t-x}{y}$} \right)\,
   d\nu(t) \!\right)\right]\exp\left(\! \beta + \!\! \int\limits_{-\infty}^{+\infty}
   \log\sqrt{ \scalebox{0.8}{$\ds\frac{1+t^2}{y^2+(t-x)^2}$} }\,d\nu(t) \!\right)
  \!, \\
V^{\varphi}(x,y)\!
&=\!\!\left[ \sin\left(\! \pi\nu(-\infty) +\!\! \int\limits_{-\infty}^{+\infty}
   \left( \scalebox{1.0}{$\frac{\pi}{2}-\arctan\frac{t-x}{y}$} \right)\,
   d\nu(t) \!\right)\right] \exp\left(\! \beta + \!\! \int\limits_{-\infty}^{+\infty}
   \log\sqrt{ \scalebox{0.8}{$\ds\frac{1+t^2}{y^2+(t-x)^2}$}}\,d\nu(t) \!\right)
  \!, \\
\end{align*}

\vspace{-0.45cm}

\noindent
and \vspace{-0.2cm}
\begin{align}\label{cor3.7}
\begin{array}{ll}
U^{\varphi}(x)
&
\begin{displaystyle}
=\Big[\cos\pi\nu(x)\Big] \exp\left( \beta + \int_{-\infty}^{+\infty}
   \log\frac{\sqrt{1+t^2}}{|x-t|}\,d\nu(t) \right) \,,
\end{displaystyle}
 \\[0.5cm]
V^{\varphi}(x)
&
\begin{displaystyle}
= \Big[\sin\pi\nu(x)\Big]\exp\left( \beta + \int_{-\infty}^{+\infty}
   \log\frac{\sqrt{1+t^2}}{|x-t|}\,d\nu(t) \right) \,,
\end{displaystyle}
       \end{array}
  \end{align}

\vspace{-0.45cm}
\noindent
respectively.

The following fact follows easily from Theorem~\sref{cor3}.


\vspace{0.1cm}
\begin{corollary}\label{cor4}
A function $z\cdot\varphi$ is universally starlike, i.e. $\varphi$ can
be represented in the form
\begin{equation}\label{cor4.1}
\varphi (z) = \exp\left( \int_0^1 \log\frac{1}{1-tz}\,d\mu(t) \right)\,,
\quad z\in\H\,,
\end{equation}
for some $\mu\in\mathcal{M}^{+}(\R)$ satisfying $\supp\mu\subseteq[0,1]$
and $\mu([0,1])=1$, if and only if $\varphi(0)=1$,
$\varphi\in \hol ((-\infty,1))$ and $\varphi\in\mathcal{P}_{\log}$.
\end{corollary}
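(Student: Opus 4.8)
The plan is to combine Corollary~\sref{cor2} with the known characterization \eqref{1} of universally starlike functions. First I observe that by the very definition in the Introduction, a function $\Psi$ is universally starlike precisely when $\Psi\in\hol_1(\lm)$ and $\Psi$ maps every circular domain in $\lm$ containing the origin one-to-one onto a starlike domain; and by \eqref{1} (the result of \cite{R}) this holds if and only if $\Psi(z)/z$ has the exponential representation \eqref{cor4.1} for some probability measure $\mu$ supported in $[0,1]$. Writing $\varphi(z):=\Psi(z)/z$, the claim of the corollary is thus reduced to identifying the class of admissible $\varphi$ with the conditions $\varphi(0)=1$, $\varphi\in\hol((-\infty,1))$ and $\varphi\in\mathcal{P}_{\log}$.

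The forward direction is immediate: if $\varphi$ admits the representation \eqref{cor4.1}, then setting $z=0$ gives $\varphi(0)=\exp(0)=1$; since $1-tz\neq 0$ for $t\in[0,1]$ and $z\in(-\infty,1)$ (as $tz<1$ there), the integrand $\log\frac{1}{1-tz}$ is holomorphic on $(-\infty,1)$, so $\varphi\in\hol((-\infty,1))$; and the exponent is exactly $L_{\mu}(z)$ of \eqref{5}, which by the discussion following \eqref{5} lies in $\log\mathcal{P}$, whence $\varphi=\exp(L_\mu)\in\mathcal{P}_{\log}$. For the converse, suppose $\varphi(0)=1$, $\varphi\in\hol((-\infty,1))$ and $\varphi\in\mathcal{P}_{\log}$. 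By definition \eqref{9}, $f:=\log\varphi$ satisfies $f\in\primP\cap\log\mathcal{P}$, and the holomorphy of $\varphi$ on $(-\infty,1)$ together with $\varphi(0)=1$ gives $f\in\hol((-\infty,1))$ with $f(0)=0$. Thus $f\in\primP\cap\log\mathcal{P}\cap\hol((-\infty,1))$, and Corollary~\sref{cor2} applies directly: there exists $\mu\in\mathcal{M}^{+}(\R)$ with $\supp\mu\subseteq[0,1]$, $\mu(\R)=1$, and $f(z)=\int_{[0,1]}\log\frac{1}{1-tz}\,d\mu(t)$. Exponentiating recovers \eqref{cor4.1}.

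The only genuinely delicate point is matching the normalization in Corollary~\sref{cor2}, whose representation \eqref{cor2.1} carries no free additive constant $\beta$, against the a~priori freedom $f=\log\varphi+2\pi i k$ in choosing the branch of the logarithm of $\varphi\in\mathcal{P}_{\log}$. Here I would use that $\varphi\in\mathcal{P}_{\log}\subset\mathcal{P}$ is real and positive on $(-\infty,1)$ at $z=0$ (since $\varphi(0)=1$) and that $f=\log\varphi$ is the canonical Pick logarithm normalized by $f(-i)=\overline{f(i)}$, so that the branch is pinned down and $f(0)=0$ forces the constant term to vanish; this is precisely the hypothesis under which Corollary~\sref{cor2} yields the constant-free form \eqref{cor2.1}. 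Once this normalization bookkeeping is settled, the equivalence with universal starlikeness follows by reading \eqref{1} in the form $\Psi(z)=z\varphi(z)$, noting that $\Psi\in\hol_1(\lm)$ is equivalent to $\varphi\in\hol_0(\lm)$ with $\varphi(0)=1$, which is exactly the stated condition restricted appropriately to $(-\infty,1)\subset\lm$.
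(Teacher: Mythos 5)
Your forward direction is essentially the paper's and is fine, up to one imprecision: membership in $\mathcal{P}_{\log}$ requires the exponent $L_\mu$ of \eqref{5} to lie in $\primP\cap\logP$, not merely in $\logP$ (the derivative $L_\mu'$ must also be a Pick function); this is exactly what Corollary~\ref{cor2} supplies, so you should cite it rather than only the remark that $L_\mu\in\log\mathcal{P}$. The converse, however, has a genuine gap at its central step: you assert that "$\varphi\in\hol((-\infty,1))$ together with $\varphi(0)=1$ gives $f=\log\varphi\in\hol((-\infty,1))$", and this is precisely what has to be proved. Holomorphy of $\varphi$ across $(-\infty,1)$ transfers to its Pick logarithm only if $\varphi$ is zero-free and \emph{positive} on all of $(-\infty,1)$: at a point where $\varphi<0$, the canonical branch $f$ (which has $\IM f\in[0,\pi]$ on $\H$ and satisfies $f(\overline{z})=\overline{f(z)}$) jumps by $2\pi i$ across the axis, and at a zero of $\varphi$ it is not even locally bounded. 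A Pick function holomorphic across $(-\infty,1)$ with $\varphi(0)=1$ can perfectly well be negative on part of that ray --- $\varphi(z)=1+z$ is such a function --- so positivity on the whole ray must be extracted from the hypothesis $\varphi\in\mathcal{P}_{\log}$, which your argument never uses beyond invoking \eqref{9} to name $f$. Your closing paragraph does not repair this: the "delicate point" you treat is only the additive $2\pi i k$ ambiguity of the branch, and the phrase "real and positive on $(-\infty,1)$ at $z=0$" conflates positivity at one point with positivity on the ray.

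This missing step is where the paper spends its entire effort. It writes $\varphi$ through the exponential representation \eqref{cor3.1}, uses \eqref{a2} to get $\IM\varphi(x+i0)=0$ for $x<1$, deduces from \eqref{cor3.7} that $\sin\pi\nu(x)=0$ almost everywhere on $(-\infty,1)$, and then exploits monotonicity of $\nu\in[0,1]$ to reduce to three cases: $\nu=0$ a.e. on $(-\infty,1)$ (the good case, which yields \eqref{cor2.1} via \eqref{5.1}--\eqref{5.3}), $\nu=1$ a.e. there (forcing $\varphi$ constant, hence $\varphi\equiv1$ by the normalization), and $\nu=\chi_{[a,+\infty)}$ with $a<1$ (forcing $\varphi(z)=a/(a-z)$, whose pole at $a<1$ contradicts $\varphi\in\hol((-\infty,1))$). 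Alternatively, you could close your gap directly: if $\varphi$ vanished at some $x_0\in(-\infty,1)$, then $\varphi'/\varphi\in\mathcal{P}$ would have a simple pole at $x_0$ with residue equal to the order of the zero, hence positive, which contradicts $\IM(\varphi'/\varphi)\geq0$ on $\H$; since $\varphi$ is continuous and zero-free on $(-\infty,1)$ and $\varphi(0)=1$, it is positive there, and then your branch-matching argument (on the simply connected domain $\C\setminus[1,+\infty)$) does finish the proof. Either way, an argument of this kind must be supplied; as written, the proposal assumes the conclusion at the decisive point.
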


\vspace{0.1cm}
\subsection{\texorpdfstring{Characterization of $\IM\mathcal{P}_{\log}$}{Characterization
of imaginary parts of functions defined in (1.17)}}\label{22a}
$\phantom{a}$ \\

 \vspace{-0.25cm}
Theorems~\sref{cor3} and~\sref{pr1} allow to describe the set of functions
$\IM\varphi$, when $\varphi$ runs over the class
$\varphi\in\mathcal{P}_{\log}\setminus\exp(\Delta_{\primP})$.

\begin{theorem}\label{th4}
Let $\varphi\in\mathcal{P}_{\log}$ and $\exp(\Delta_{\primP})$  be defined
in \eqref{9} and \eqref{2.4}, respectively.

For each function $\varphi\in\mathcal{P}_{\log}\setminus\exp(\Delta_{\primP})$
there exists a non-negative non-zero function
$v\in\bigcup_{p>1} L_p(\R,dx)$, satisfying for almost all
$-\infty<x_1<x_2<+\infty$ the inequality
\begin{equation}\label{th4.1}
\pi\cdot
\begin{vmatrix}
\widetilde{v}(x_1 ) & \widetilde{v}(x_2) \\ v(x_1) & v(x_2)
\end{vmatrix}
= \lim_{\varepsilon\downarrow0}
  \int_{\R\setminus(-\varepsilon,\varepsilon)}
  \begin{vmatrix}
  v (x_1+t) & v(x_2+t) \\ v(x_1) & v(x_2)
  \end{vmatrix}\,\frac{dt}{t}\geq 0\,,
\end{equation}
such that
\begin{equation}\label{th4.2}
\varphi(z)
= \frac{1}{\pi}\int_{-\infty}^{+\infty}\frac{v(t)\,dt}{t-z}\,,\quad z\in\H\,.
\end{equation}
Conversely, for each non-negative non-zero function
$v\in\bigcup_{p>1} L_p(\R,dx)$, satisfying \eqref{th4.1} for almost all
$-\infty<x_1<x_2<+\infty$, the function $\varphi$ defined in \eqref{th4.2}
belongs to the class $\mathcal{P}_{\log}\setminus\exp(\Delta_{\primP})$.
The representation \eqref{th4.2} is unique for every
$\varphi\in\mathcal{P}_{\log}\setminus\exp(\Delta_{\primP})$.

For every $\varphi\in\mathcal{P}_{\log}\setminus\exp(\Delta_{\primP})$,
\begin{equation}\label{th4.3}
\varphi(x+i0)
= \widetilde{v}(x) + iv (x)\quad \text{\rm $m$-a.e. on $\R$}\,,
\end{equation}
where $\widetilde{v}\in\bigcup_{p>1} L_p(\R,dx)$ and the functions $v$
and $\widetilde{v}$ are defined in \eqref{th4.2} and \eqref{2.5},
respectively.
\end{theorem}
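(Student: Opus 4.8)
The plan is to reduce the entire statement to a single equivalence: for $\varphi\in\mathcal{P}$ with $\IM\varphi>0$ in $\H$ and boundary decomposition $\varphi(x+i0)=\widetilde v(x)+iv(x)$, the a.e.\ inequality \eqref{th4.1} holds precisely when the boundary argument $x\mapsto\arg\varphi(x+i0)=\pi\nu(x)$ is non-decreasing, which by Theorem~\ref{cor1} and Theorem~\ref{cor3} is exactly the condition placing $\varphi$ in $\mathcal{P}_{\log}$. Everything else is assembled from the representation and $H_p$ facts already proved in Theorem~\ref{cor3}, the characterization in Theorem~\ref{pr1}, and standard Hilbert-transform theory.

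For the forward direction I would start from $\varphi\in\mathcal{P}_{\log}\setminus\exp(\Delta_{\primP})$ and invoke Theorem~\ref{cor3} (part 2) to get $\varphi\in\mathcal{H}_p(\H)$ for every $p\in(1/\nu(\R),1/J_\nu(\R))$ together with the boundary decomposition $\varphi(x+i0)=\widetilde v(x)+iv(x)$, where $v=V^{\varphi}\ge0$, $v\not\equiv0$, and $\widetilde v,v\in L_p(\R,dx)$. Since $v\in L_p$ with $p>1$, its Cauchy transform $\tfrac1\pi\int_\R v(t)(t-z)^{-1}\,dt$ again lies in $H_p(\H)$, and by the Sokhotski--Plemelj formula its non-tangential boundary value equals $\widetilde v(x)+iv(x)$ almost everywhere; uniqueness of $H_p$-functions from their boundary values then yields \eqref{th4.2}. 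To obtain \eqref{th4.1} I would apply condition (3) of Theorem~\ref{pr1}, giving the strict interior inequality for the determinant of $(U^{\varphi},V^{\varphi})$ at $x_1<x_2$ and common $y>0$; letting $y\downarrow0$ and using the a.e.\ existence of the limits $U^{\varphi}(x)=\widetilde v(x)$, $V^{\varphi}(x)=v(x)$ turns the strict inequality into $\ge0$. The middle equality in \eqref{th4.1} is then the algebraic identity obtained by unfolding $\pi\widetilde v(x_j)=P\!\int_\R v(t)(t-x_j)^{-1}\,dt$, substituting $t=x_j+s$, and recognizing $v(x_1+s)v(x_2)-v(x_2+s)v(x_1)$ as the displayed $2\times2$ determinant.

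For the converse I would take a non-negative, non-zero $v\in\bigcup_{p>1}L_p(\R,dx)$ satisfying \eqref{th4.1} and define $\varphi$ by \eqref{th4.2}; as the Cauchy transform of $v\in L_p$ one has $\varphi\in\mathcal{H}_p(\H)$. Because $v\ge0$, the imaginary part of $\varphi(x+iy)$ is the Poisson integral $\tfrac1\pi\int_\R y\,v(t)\big((t-x)^2+y^2\big)^{-1}\,dt\ge0$, so $\varphi\in\mathcal{P}$; since $v\not\equiv0$ this integral is strictly positive, hence $\varphi$ is non-constant and $\log\varphi\in\mathcal{P}$ admits the representation \eqref{3} with density $\rho$, where $\pi\rho(x)=\arg\varphi(x+i0)$. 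Writing the boundary values as $\varphi(x_j+i0)=|\varphi(x_j+i0)|e^{i\pi\rho(x_j)}$ (legitimate since an $H_p$-function has nonvanishing boundary values a.e.), the determinant in \eqref{th4.1} equals $|\varphi(x_1+i0)|\,|\varphi(x_2+i0)|\sin\!\big(\pi(\rho(x_2)-\rho(x_1))\big)$, so \eqref{th4.1} forces $\sin\!\big(\pi(\rho(x_2)-\rho(x_1))\big)\ge0$, and as $\rho$ takes values in $[0,1]$ this gives $\rho(x_1)\le\rho(x_2)$ for almost all $x_1<x_2$. Passing to the non-decreasing representative $\nu$ of $\rho$ (permissible because \eqref{3} depends on $\rho$ only up to the identification \eqref{aux4}) puts $\log\varphi$ in the form \eqref{cor1.3}, equivalently $\varphi$ in the form \eqref{cor3.1}, with non-decreasing $\nu$, so the converse direction of Theorem~\ref{cor3} yields $\varphi\in\mathcal{P}_{\log}$. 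Finally $\varphi\notin\exp(\Delta_{\primP})$: reading off \eqref{2.4}, every function in $\exp(\Delta_{\primP})$ has boundary imaginary part either identically zero (for $\theta\in\{0,1\}$) or equal to $\sin(\pi\theta)\,|x-a|^{-\theta}\mathbf 1_{x>a}$ with $0<\theta<1$, which lies in no $L_p(\R)$ since $\theta p<1$ is required near $a$ while $\theta p>1$ is required at infinity; neither alternative is compatible with a non-zero $v\in\bigcup_{p>1}L_p$.

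Uniqueness of \eqref{th4.2} is immediate, since the representation forces $v=\IM\varphi(\cdot+i0)=V^{\varphi}$, which is determined a.e.\ by $\varphi$; and the boundary formula \eqref{th4.3} with $\widetilde v\in\bigcup_{p>1}L_p$ is exactly \eqref{cor3.8} of Theorem~\ref{cor3}, following also from Sokhotski--Plemelj applied to \eqref{th4.2} together with boundedness of the Hilbert transform on $L_p$ for $1<p<\infty$. I expect the main obstacle to be the careful justification, in the forward direction, that the strict interior determinant inequality of Theorem~\ref{pr1} survives the passage $y\downarrow0$ as the a.e.\ inequality \eqref{th4.1} and that the limiting determinant coincides with the principal-value integral; this needs the a.e.\ existence of the non-tangential boundary limits and enough $L_p$-control to pass to the limit, all of which is supplied by Theorem~\ref{cor3} and standard $H_p$-theory.
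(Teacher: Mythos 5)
Your proposal is correct and follows essentially the same route as the paper's own proof: necessity via Theorem~\ref{cor3}(2) and the limit $y\downarrow 0$ in condition (3) of Theorem~\ref{pr1}, sufficiency by writing $\log\varphi$ in the form \eqref{3} with density $\rho$, reading off $\sin\pi\bigl(\rho(x_2)-\rho(x_1)\bigr)\geq 0$ from the boundary determinant, passing to a non-decreasing representative to invoke Theorem~\ref{cor1}, and excluding $\exp(\Delta_{\primP})$ by the same $L_p$ incompatibility of $|x-a|^{-\theta}$ near $a$ and at infinity. The only cosmetic deviations are that you derive \eqref{th4.2} from Plemelj plus uniqueness of $H_p$-functions with given boundary values where the paper applies the Schwarz integral formula to $\varphi/i$, and you phrase the boundary computation directly in terms of $\arg\varphi(x+i0)=\pi\rho(x)$ rather than through the paper's intermediate formulas \eqref{9.2}--\eqref{9.3}; both are equivalent in substance.
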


\vspace{0.25cm}
\subsection{Universally starlike functions }\label{23}$\phantom{a}$ \\

\vspace{-0.35cm}
To apply the results of subsection~\sref{22} to the universally starlike
functions 
it is necessary to observe that according to \eqref{2.4}
\begin{equation}
\varphi(z):=
\exp\left( \int\limits_0^1 \log\frac{1}{1-tz}\,d\mu(t) \right)
\in\exp(\Delta_{\primP})
\end{equation}
if and only if
\begin{equation}\label{2.6}
\varphi(z) =
\begin{cases}
1\,,
& \supp\mu=\{0\}\,; \\
\dfrac{ a^{1-\mu(\{0\})} }{ (a-z)^{1-\mu(\{0\})} }\;,
& \supp\mu = \left\{ 0,{1}/{a} \right\}\,,\quad
  \mu(\{0\}) \in (0,1)\,, \\[-0.1cm]
& \mu\left( \left\{ {1}/{a} \right\} \right)
  = 1-\mu(\{0\})\,,\quad 1\leq a<+\infty\,; \\[0.2cm]
\dfrac{a}{a-z}\,,
& \supp\mu = \left\{ {1}/{a} \right\}\,,\quad
  \mu\left( \left\{ {1}/{a} \right\} \right)
  = 1\,,\quad 1\leq a < +\infty\,.
\end{cases}
\end{equation}

\medskip
This fact leads to the following consequence  of 
Corollary~\ref{cor4} and Theorem~\sref{th4}.

\vspace{0.1cm}
\begin{theorem}\label{th5}
If function $\Psi$ is universally starlike, then either
\begin{equation}\label{th5.1}
\frac{\Psi(z)}{z} \in \left\{
\dfrac{ a^{\theta} }{(a-z)^{\theta}}\; \mid\;
\theta\in[0,1]\,,\ 1\leq a<+\infty\,\right\}\,,
\end{equation}
or $\Psi(z)/z\in\bigcup_{p>1}\mathcal{H}_p(\H)$ and there exists a
function $v$ satisfying
\begin{subequations}
\begin{align}
\label{th5.2a} 
& v\in\bigcup_{p>1} L_p(\R,dx)\,,\quad v(x)=0\,,\quad
  x<1\,,\quad v(x)\geq0\,,\quad x\geq1\,; \\[0.3cm]
\label{th5.2b} 
& \int_{1}^{+\infty} \frac{v(t)}{t}\,dt = \pi\,, \\[0.3cm]
\label{th5.2c} 
& \pi\cdot
\begin{vmatrix}
\widetilde{v}(x_1) & \widetilde{v}(x_2) \\ v(x_1) & v(x_2)
\end{vmatrix}
= \lim_{\varepsilon \downarrow 0}
  \int_{ \R\setminus(-\varepsilon,\varepsilon)}
  \begin{vmatrix}
  v(x_1+t) & v(x_2+t) \\ v(x_1) & v(x_2)
  \end{vmatrix}
  \,\frac{dt}{t} \geq 0\,,
\end{align}
\end{subequations}

\vspace{0.25cm} \noindent
for almost all $1<x_1<x_2<+\infty$ such that
\begin{equation}\label{th5.3}
\frac{\Psi(z)}{z}
= \frac{1}{\pi} \int_{1}^{+\infty} \frac{v(t)\, dt}{t-z}\,,\quad z\in\H\,.
\end{equation}
Conversely, each function in the set
\begin{equation}\label{th5.4}
\left\{ \dfrac{a^{\theta}\cdot z}{(a-z)^{\theta}}\;\mid\;
\theta\in[0,1]\,,\ 1\leq a< +\infty \,\right\}\,,
\end{equation}
and each function of the type
\begin{equation}\label{th5.5}
\frac{z}{\pi} \int_{1}^{+\infty} \frac{v(t)\,dt}{t-z}\ \, ,
\end{equation}
where $v$ satisfies conditions \eqref{th5.2a}, \eqref{th5.2b} and
\eqref{th5.2c}, is universally starlike.
The function $\widetilde{v}(x)$ in \eqref{th5.2c} is defined by
formula \eqref{2.5}.
\end{theorem}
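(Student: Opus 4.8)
The plan is to reduce Theorem~\ref{th5} entirely to the already-stated characterizations, so that essentially no new analysis is required beyond correctly bookkeeping the normalizations $\Psi(0)=\Psi'(0)-1=0$ and the holomorphy on $\lm$. First I would recall, from the discussion around \eqref{1} and Corollary~\ref{cor4}, that $\Psi$ is universally starlike if and only if $\varphi(z):=\Psi(z)/z$ has the representation \eqref{cor4.1} for some probability measure $\mu$ with $\supp\mu\subseteq[0,1]$, equivalently $\varphi(0)=1$, $\varphi\in\hol((-\infty,1))$ and $\varphi\in\mathcal{P}_{\log}$. The entire proof then splits according to the dichotomy $\varphi\in\exp(\DeltaprimP)$ versus $\varphi\notin\exp(\DeltaprimP)$.

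In the first case, I would invoke the explicit description \eqref{2.6} of those $\varphi$ of the form \eqref{cor4.1} that lie in $\exp(\DeltaprimP)$. Reading off the three cases there, each is of the form $a^{\theta}/(a-z)^{\theta}$ with $\theta\in[0,1]$ and $1\leq a<+\infty$ (the value $\theta=0$ giving the constant $1$, the value $a=+\infty$ being excluded since $\supp\mu\subseteq[0,1]$ forces the pole at $1/a\in[0,1]$, i.e. $a\geq 1$). This yields exactly the exceptional family \eqref{th5.1}, and multiplying by $z$ gives \eqref{th5.4}; conversely each function in \eqref{th5.4} is manifestly universally starlike, e.g. because its quotient by $z$ is of the form \eqref{cor4.1} with $\mu$ a combination of a point mass at $0$ and a point mass at $1/a$ of total mass one. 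This step is routine case-checking against \eqref{2.6}.

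In the second case, $\varphi\in\mathcal{P}_{\log}\setminus\exp(\DeltaprimP)$, and I would apply Theorem~\ref{th4} directly: it provides the unique representation \eqref{th4.2}, the membership $\varphi\in\bigcup_{p>1}\mathcal{H}_p(\H)$ (from Theorem~\ref{cor3}, part~2), the boundary-value formula \eqref{th4.3}, and the determinant inequality \eqref{th4.1}, which is exactly \eqref{th5.2c}. What remains is to translate the two extra constraints coming from universal starlikeness, namely $\varphi\in\hol((-\infty,1))$ and $\varphi(0)=1$, into \eqref{th5.2a} and \eqref{th5.2b}. For \eqref{th5.2a}, holomorphy of $\varphi$ on $(-\infty,1)$ together with \eqref{th4.3} and the fact (from Theorem~\ref{cor3}) that $v(x)=V^{\varphi}(x)=\pi$-times the density governed by $\nu$, combined with Corollary~\ref{cor2}'s condition $\supp\nu\subseteq$ the complement of $(-\infty,1)$, forces $v(x)=0$ for $x<1$; nonnegativity of $v$ for $x\geq1$ is the statement $\IM\varphi\geq0$ on the boundary. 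For \eqref{th5.2b} I would evaluate \eqref{th4.2} at $z=0$: since $\varphi(0)=1$ is real, taking imaginary parts of $\tfrac{1}{\pi}\int v(t)\,dt/(t-0)$ must vanish automatically, while the normalization is captured by matching the residue/total-mass condition $\mu([0,1])=1$ of \eqref{cor4.1}; concretely $\int_1^{\infty}v(t)\,t^{-1}\,dt=\pi\nu(\R)=\pi$ because $\nu(\R)=\mu([0,1])=1$. The converse direction reverses these identifications: given $v$ satisfying \eqref{th5.2a}--\eqref{th5.2c}, Theorem~\ref{th4} returns a $\varphi\in\mathcal{P}_{\log}\setminus\exp(\DeltaprimP)$, conditions \eqref{th5.2a}--\eqref{th5.2b} guarantee $\varphi\in\hol((-\infty,1))$ and $\varphi(0)=1$, and Corollary~\ref{cor4} then gives that $z\varphi(z)$ is universally starlike.

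The main obstacle I anticipate is the careful verification that holomorphy of $\varphi$ on $(-\infty,1)$ is equivalent to the one-sided vanishing $v|_{(-\infty,1)}=0$ together with the normalization \eqref{th5.2b}, since this is where the support condition $\supp\mu\subseteq[0,1]$ from Corollary~\ref{cor2} must be matched against the support of the measure $\nu$ in the $\mathcal{P}_{\log}$ representation and against the boundary density $v$; the identity \eqref{a2} relating holomorphy on an interval to the support of the representing measure, applied to $\log\varphi$, is the technical lynchpin, and I would spend most of the argument making that translation precise while keeping track of which functions $\nu$ in the class $\aleph_{\nu}$ realize \eqref{aux2}.
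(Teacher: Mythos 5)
Your overall architecture coincides with the paper's own proof: reduce via Corollary~\ref{cor4} to the characterization $\varphi(0)=1$, $\varphi\in\hol((-\infty,1))$, $\varphi\in\mathcal{P}_{\log}$ for $\varphi:=\Psi(z)/z$; split according to $\varphi\in\exp(\DeltaprimP)$ or not; dispose of the exceptional case by \eqref{2.6}; and handle the generic case by Theorem~\ref{th4} together with Theorem~\ref{cor3}. However, two steps do not hold up as written. The more serious one is in your converse direction: you assert that ``Theorem~\ref{th4} returns a $\varphi\in\mathcal{P}_{\log}\setminus\exp(\DeltaprimP)$'' from a $v$ satisfying \eqref{th5.2a}--\eqref{th5.2c}, but Theorem~\ref{th4} requires the determinant inequality \eqref{th4.1} for almost all $-\infty<x_1<x_2<+\infty$, whereas \eqref{th5.2c} is hypothesized only for almost all $1<x_1<x_2<+\infty$; the application is not justified until this discrepancy is closed. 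The paper opens its proof with exactly the bridging observation you omit: if $v$ satisfies \eqref{th5.2a}, then for $x_1<x_2<1$ the determinant vanishes (the bottom row is zero), and for $x_1<1\leq x_2$ it equals $\widetilde{v}(x_1)\,v(x_2)\geq0$, since
\begin{equation*}
\widetilde{v}(x_1)=\frac{1}{\pi}\int_{1}^{+\infty}\frac{v(t)}{t-x_1}\,dt\geq 0\,,\qquad x_1<1\,,
\end{equation*}
so \eqref{th5.2c} extends automatically to all pairs. The verification is easy, but it is a necessary step and must be stated.

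The second defect is your ``concrete'' justification of \eqref{th5.2b}: the identities $\int_1^{+\infty}v(t)\,t^{-1}\,dt=\pi\nu(\R)$ and $\nu(\R)=\mu([0,1])=1$ are both false in general. From \eqref{5} one reads off $\nu(t)=1-\mu(1/t)$ on $[1,+\infty)$, hence $\nu(\R)=1-\mu(\{0\})$, which is strictly less than $1$ whenever $\mu$ has an atom at the origin; for instance $\mu=\tfrac12\delta_0+\tfrac14\delta_{1/a}+\tfrac14\delta_{1/b}$ with $1\leq a<b$ gives $\varphi(z)=(ab)^{1/4}(a-z)^{-1/4}(b-z)^{-1/4}$ with $\nu(\R)=\tfrac12$, yet $\int_1^{+\infty}v(t)\,t^{-1}\,dt=\pi$ still holds. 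The correct argument is the one you mention first and then abandon: because $v$ vanishes on $(-\infty,1)$, both sides of \eqref{th4.2} are holomorphic on $\C\setminus[1,+\infty)$, so the identity extends there by analytic continuation and may be evaluated at $z=0$, giving $1=\varphi(0)=\frac{1}{\pi}\int_1^{+\infty}v(t)\,t^{-1}\,dt$, i.e. \eqref{th5.2b}; this is precisely the paper's one-line argument that \eqref{th5.2b} follows from \eqref{th5.3} and $\varphi(0)=1$. With these two repairs (and replacing your detour through Theorem~\ref{cor3} and Corollary~\ref{cor2} for \eqref{th5.2a} by the paper's direct application of \eqref{a2} and \eqref{2} to $\varphi$ itself, which is cleaner but not essential), your proof matches the paper's.
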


\vspace{0.25cm}
Observe that according to the results of Theorem~\sref{th4} two different
functions $v$ in \eqref{th5.5} correspond to two different universally
starlike functions.
Since
\begin{equation*}
\int\limits_{\R\setminus(-\varepsilon,\varepsilon)}
\begin{vmatrix}
v(x_1+t) & v(x_2+t) \\ v(x_1) & v(x_2)
\end{vmatrix}
\,\frac{d t}{t}
= \int\limits_{\varepsilon}^{+\infty}
 \left(\;
 \begin{vmatrix}
 v(x_1+t) & v(x_2+t) \\ v(x_1) & v (x_2)
 \end{vmatrix}
 +
 \begin{vmatrix}
 v(x_1) & v(x_2) \\ v(x_1-t) & v(x_2-t)
 \end{vmatrix}
 \;\right)
 \,\frac{dt}{t}\,,
\end{equation*}

\vspace{-0.3cm} \noindent
then according to \cite[Th.1.9, p.159]{kar} any function of the form
\begin{equation*}
v(x) = \chi_{[a,+\infty)}(x)\cdot e^{-\psi(x)}\,,\quad a\geq 1\,,
\end{equation*}
where $\psi$ is convex on $[a,+\infty)$ and
$\exp(-\psi(x))\in\bigcup_{p > 1} L_p([a,+\infty),dx)$ satisfies \eqref{th5.2c}.
Thus, the following assertion holds.

\vspace{0.25cm}
\begin{corollary}\label{cor5}
Let $a \geq 1$, $\gamma > 1$ and $\psi$ be a convex function on $[a,+\infty)$
such that
\begin{equation*}
\int_{a}^{+\infty} e^{-\gamma\cdot\psi(t)}\,dt < +\infty\,.
\end{equation*}
Put $b:=\int_{a}^{+\infty} t^{-1} e^{-\psi(t)}\,dt\,$.
Then the function
\begin{equation*}
\frac{z}{b}\,\cdot\int_{a}^{+\infty}\frac{e^{-\psi(t)}\;dt}{t-z}
\end{equation*}
is universally starlike.
\end{corollary}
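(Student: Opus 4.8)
The plan is to obtain the corollary as a direct instance of the converse direction of Theorem~\sref{th5}. To this end I would set
\[
w(t) := \frac{\pi}{b}\,\chi_{[a,+\infty)}(t)\,e^{-\psi(t)}\,,\qquad t\in\R\,,
\]
a positive scalar multiple of the function $v=\chi_{[a,+\infty)}e^{-\psi}$ featured in the remark preceding this corollary. With this choice one has the identity $\frac{z}{\pi}\int_1^{+\infty}\frac{w(t)\,dt}{t-z}=\frac{z}{b}\int_a^{+\infty}\frac{e^{-\psi(t)}\,dt}{t-z}$, so the function in the statement is exactly of the type \eqref{th5.5}. It therefore suffices to check that $w$ satisfies the three hypotheses \eqref{th5.2a}, \eqref{th5.2b}, \eqref{th5.2c} and then to quote Theorem~\sref{th5}.

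First I would settle the integrability bookkeeping, which simultaneously guarantees that $b$ is a well-defined, finite and strictly positive normalizing constant. Since $\gamma>1$, the hypothesis $\int_a^{+\infty}e^{-\gamma\psi(t)}\,dt<+\infty$ is precisely the statement $e^{-\psi}\in L_{\gamma}([a,+\infty),dx)$, whence $w\in\bigcup_{p>1}L_p(\R,dx)$; combined with $a\geq1$ and $e^{-\psi}\geq0$ this yields \eqref{th5.2a}. To see $0<b<+\infty$ I would apply H\"older's inequality with the conjugate exponent $\gamma'=\gamma/(\gamma-1)$: because $\gamma'>1$ and $a\geq1$, the function $t^{-1}$ lies in $L_{\gamma'}([a,+\infty))$, so $b=\int_a^{+\infty}t^{-1}e^{-\psi(t)}\,dt$ is finite, while positivity of $e^{-\psi}$ forces $b>0$. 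The normalization \eqref{th5.2b} is then immediate from $\int_1^{+\infty}\tfrac{w(t)}{t}\,dt=\tfrac{\pi}{b}\int_a^{+\infty}\tfrac{e^{-\psi(t)}}{t}\,dt=\pi$.

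It remains to verify the determinantal total-positivity condition \eqref{th5.2c}, and for this I would invoke the remark immediately preceding the corollary: by \cite[Th.1.9, p.159]{kar} the function $v=\chi_{[a,+\infty)}e^{-\psi}$ satisfies \eqref{th5.2c} whenever $\psi$ is convex on $[a,+\infty)$ and $e^{-\psi}\in\bigcup_{p>1}L_p([a,+\infty),dx)$, both of which now hold. Passing from $v$ to $w=(\pi/b)v$ preserves \eqref{th5.2c}, since the transform in \eqref{2.5} is linear, so $\widetilde{w}=(\pi/b)\widetilde{v}$, and each $2\times2$ determinant in \eqref{th5.2c} is merely multiplied by the positive factor $(\pi/b)^2$, leaving its sign unchanged. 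Once \eqref{th5.2a}--\eqref{th5.2c} are in place, Theorem~\sref{th5} declares the associated function $\frac{z}{b}\int_a^{+\infty}\frac{e^{-\psi(t)}\,dt}{t-z}$ universally starlike. The only genuine difficulty is the sign condition \eqref{th5.2c}; but this has already been delegated to the variation-diminishing result \cite[Th.1.9]{kar} in the discussion above, so the main obstacle reduces to confirming the hypotheses of that remark—chiefly the finiteness of $b$ via H\"older's inequality—and noting that positive scaling leaves \eqref{th5.2c} invariant.
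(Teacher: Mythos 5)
Your proof is correct and follows essentially the same route as the paper: the paper obtains Corollary~\ref{cor5} exactly by combining the remark preceding it (Karlin's theorem \cite[Th.1.9, p.159]{kar}, which yields \eqref{th5.2c} for $v=\chi_{[a,+\infty)}e^{-\psi}$) with the sufficiency part of Theorem~\sref{th5}. Your extra bookkeeping --- the normalization $w=(\pi/b)\,\chi_{[a,+\infty)}e^{-\psi}$, H\"older's inequality giving $0<b<+\infty$, and the observation that positive scaling leaves \eqref{th5.2c} invariant --- simply makes explicit the routine verifications of \eqref{th5.2a} and \eqref{th5.2b} that the paper leaves to the reader.
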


We should also point out that Theorem~\ref{cor3} and Theorem~\ref{th3}
answer the question about finding a representation of all those non-negative
functions $v\in\bigcup_{p> 1} L_p(\R,dx)$ which satisfy \eqref{th4.1} for
almost all $-\infty<x_1<x_2<+\infty$.

\vspace{0.25cm}
\begin{corollary}\label{cor6}
A non-negative non-zero function $v$ belongs to $\bigcup_{p>1} L_p(\R,dx)$
and satisfies \eqref{th4.1} for almost all $-\infty<x_1<x_2<+\infty$
if and only if there exists a real number $\beta\in\R$ and a non-decreasing
function $\nu:\R\to[0,1]$ which has at least two growing points such that
\begin{equation}\label{cor6.1}
v(x) = \Big[\sin\pi\nu(x)\Big] \exp\left(
\beta + \int_{-\infty}^{+\infty} \log\lfrac{\sqrt{1+t^2}}{|x-t|}\,d\nu(t) \right)
\,,\quad x\in\R\,.
\end{equation}
\end{corollary}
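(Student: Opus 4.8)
The plan is to read Corollary~\ref{cor6} as nothing more than the conjunction of the two characterizations already proved in Theorem~\ref{th4} and Theorem~\ref{cor3}, with the dichotomy of Theorem~\ref{th3} supplying the clause ``at least two growing points''. The whole argument rests on one bridge: the boundary identities \eqref{th4.3} and \eqref{cor3.8}, which say $\varphi(x+i0)=\widetilde{v}(x)+iv(x)$ $m$-a.e., so that the density $v$ parametrizing $\varphi$ through the Cauchy-type integral \eqref{th4.2} is \emph{literally} the boundary imaginary part $V^{\varphi}$ whose explicit shape is recorded in \eqref{cor3.7}. Once this identification $v=V^{\varphi}$ is in place, matching \eqref{cor3.7} against \eqref{cor6.1} is immediate, and everything else is a matter of citing the relevant theorem in the right direction.

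For the sufficiency direction I would start from a $v$ of the form \eqref{cor6.1}, with $\beta\in\R$ and a non-decreasing $\nu:\R\to[0,1]$ having at least two growing points, and define $\varphi$ by the exponential representation \eqref{cor3.1} built from this same pair $(\beta,\nu)$. Since such a $\nu$ satisfies \eqref{cor1.1}, the converse part of Theorem~\ref{cor3}(1) yields $\varphi\in\mathcal{P}_{\log}$, and its boundary formula \eqref{cor3.7} gives $V^{\varphi}(x)=$ the right-hand side of \eqref{cor6.1}, i.e.\ $v=V^{\varphi}$ $m$-a.e. Because the set of growing points $\supp\nu$ contains two distinct points, Theorem~\ref{th3} forces $\varphi\in\mathcal{P}_{\log}\setminus\exp(\Delta_{\primP})$. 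Now the forward direction of Theorem~\ref{th4} applies to this $\varphi$ and produces a non-negative non-zero density in $\bigcup_{p>1}L_p(\R,dx)$ satisfying \eqref{th4.1}; by the identification of the previous step and the uniqueness clause of Theorem~\ref{th4}, that density is exactly our $v$, so $v$ enjoys all four required properties at once.

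For the necessity direction I would run the same machinery backwards. Given $v$ non-negative, non-zero, in $\bigcup_{p>1}L_p(\R,dx)$ and satisfying \eqref{th4.1} for almost all $x_1<x_2$, the converse part of Theorem~\ref{th4} furnishes $\varphi\in\mathcal{P}_{\log}\setminus\exp(\Delta_{\primP})$ with $\varphi(z)=\tfrac{1}{\pi}\int_{-\infty}^{+\infty}v(t)(t-z)^{-1}\,dt$ and, via \eqref{th4.3}, $v=V^{\varphi}$ $m$-a.e. Applying Theorem~\ref{cor3}(1) to this $\varphi$ then produces $\beta\in\R$ and a non-decreasing $\nu:\R\to[0,1]$ for which \eqref{cor3.7} holds; comparing with $v=V^{\varphi}$ gives precisely the representation \eqref{cor6.1}. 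Finally, since $\varphi\notin\exp(\Delta_{\primP})$, Theorem~\ref{th3} guarantees that $\supp\nu$ consists of at least two distinct points, i.e.\ $\nu$ has at least two growing points, as required.

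The only genuinely non-routine point, and the one I would be most careful about, is the matching of the two parametrizations: one must verify that the $v$ appearing as a Cauchy density in \eqref{th4.2} coincides with the boundary imaginary part displayed in \eqref{cor3.7}, rather than merely agreeing up to the Hilbert-transform bookkeeping. This is exactly what the two boundary-value identities \eqref{th4.3} and \eqref{cor3.8} deliver, and the uniqueness statements in Theorem~\ref{th4} (uniqueness of $v$) and Theorem~\ref{cor3} (uniqueness of $(\beta,\nu)$ modulo the a.e.\ identification of \eqref{aux4}) upgrade this to a genuine bijection $v\leftrightarrow(\beta,\nu)$ between the two sides of the claimed equivalence. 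Once that correspondence is pinned down, non-negativity is automatic from $0\le\nu\le1\Rightarrow\sin\pi\nu\ge0$, and non-vanishing follows because two growing points prevent $\nu$ from taking only the values $0$ and $1$, so $\sin\pi\nu>0$ on a set of positive measure.
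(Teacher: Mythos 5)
Your proposal is correct and matches the paper's own treatment: the paper offers no separate proof of Corollary~\ref{cor6}, remarking only that it follows from Theorem~\ref{cor3} and Theorem~\ref{th3} (with Theorem~\ref{th4} supplying the characterization of admissible $v$), and your argument is exactly that assembly --- identify the Cauchy density of \eqref{th4.2} with the boundary imaginary part \eqref{cor3.7} via \eqref{th4.3} and \eqref{cor3.8}, then use Theorem~\ref{th3} to translate ``at least two growing points'' into $\varphi\notin\exp(\Delta_{\primP})$. Your care with the a.e.\ identifications (the jump points of $\nu$ form a null set, so the normalization \eqref{aux2} is immaterial) is the intended reading of \eqref{cor6.1}.
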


\vspace{0.5cm}
\section{Preliminary Results}

 \vspace{0.35cm}
\subsection{Special properties of Pick functions}\las{31}$\phantom{a}$ \\

\vspace{-0.35cm}

Taking into account the special importance of the properties \eqref{3a}-\eqref{3c}
and \eqref{f1} for the theory of Pick functions, we give here independent simple
proofs of these relations.

For this purpose we recall a well-known result in the theory of $H_p$-spaces
(see \cite[p.70]{ko}, \cite[Th.3.1, p.57]{gar})
\begin{equation}\label{r1}
f\in H_p(\D)
\quad\Rightarrow\quad \exists\,\lim_{r\uparrow 1}
f(re^{i\theta})\in L_p([-\pi,\pi],\,d\theta)\,,\quad 0<p<+\infty\,,
\end{equation}
and observe that each $\Phi\in\PL$, $\Phi\not\equiv a$, $a\in\R$, is an
extension to $\hol(\C\setminus\R)$ of a unique function from the class
\begin{equation*}
\mathcal{P}_{+}
:= \left\{ \Phi\in\hol(\H) \mid \Phi(\H)\subseteq\H \right\}\,,
\end{equation*}
by means of the formula $\Phi(z) = \overline{\Phi(\overline{z})}$, $z\in-\H$.

\subsubsection{Proof of \eqref{3a} - \eqref{3c}.}\las{311}
Inequalities \eqref{3a} and \eqref{3b} are obviously valid for any constant
functions.
Thus, it is sufficient to prove them for all functions from $\mathcal{P}_{+}$
and $\log \mathcal{P}_{+}$, respectively.
Two fundamental properties of arbitrary function $\Phi\in\mathcal{P}_{+}$ are:
\begin{equation*}
f:=\log\Phi\in\mathcal{P}_{+}
\qquad\text{and}\qquad
\IM f=\arg \Phi\in(0,\pi)\,.
\end{equation*}
Now, introducing the change of variables
\begin{equation*}
\Phi_{\D} (w) :=
\Phi \left(i\,\frac{1-w}{1+w}\right)\, , \qquad
f_{\D} (w):= f \left(i\,\frac{1-w}{1+w}\right)
 = \log \Phi_{\D} (w)\,,\qquad
w\in\D\,,
\end{equation*}
we get
\begin{equation*}
\IM f_{\D} = \arg\Phi_{\D} \in (0,\pi)\,,
\end{equation*}
and so $\Phi_{\D}$ satisfies the conditions of a theorem of V. Smirnov (1928)
\cite{smi} (see \cite[p.409]{gol}, \cite[pp.253, 255, 444]{ost}).
According to the proof of this theorem given in \cite[p.114]{gar},
for arbitrary $\delta \in (0,1)$, we have:
\begin{align*}
i^{-\delta} \Phi_{\D}^{\delta}
&= |\Phi_{\D}|^{\delta} \big(
   \cos\delta (\pi/2-\arg \Phi_{\D} )
   - i\,\sin\delta ( \pi/2-\arg\Phi_{\D} ) \big)\in\hol(\D)\,, \\
\RE\big[ i^{-\delta} \Phi_{\D}^{\delta} \big]
&= |\Phi_{\D}|^{\delta} \cos\delta( \pi/2-\arg\Phi_{\D} )
   \geq |\Phi_{\D}|^{\delta} \cos( \delta\pi/2 )\,.
\end{align*}
Hence, by the mean value property of the harmonic function
$\RE\big[ i^{-\delta}\Phi_{\D}^{\delta}]$,
we have (see \cite[p.34]{hk}):
\begin{equation*}
\cos(\delta\pi/2)
\int_{-\pi}^{\pi} \left|\Phi_{\D}(r e^{i \theta})\right|^{\delta}\,d\theta
\leq \int_{-\pi}^{\pi}
\RE\left[ i^{-\delta}\Phi_{\D}^{\delta}(re^{i\theta})\right]\,d\theta
= 2\pi\RE\left[ i^{-\delta}\Phi_{\D}^{\delta}(0)\right]\quad\forall\,r\in[0,1)\,,
\end{equation*}

\vspace{-0.2cm} \noindent
i.e., $\Phi_{\D}\in H_{\delta}(\D)$ for any $\delta\in(0,1)$ and the proof
of the leftmost inclusion $\PL\subset H_\delta(\H)$ in \eqref{3c} is complete.

From $-1/\Phi\in\mathcal{P}_{+}$ it follows that we also have
$1/\Phi_{\D}\in H_{\delta}(\D)$, which in view of \eqref{r1} proves the
inequality \eqref{3a}.

Now, the evident inequality:
\begin{equation*}
x^{\delta} + x^{-\delta} \geq e^{\delta |\log x|}\,,\quad
x>0\,,\quad \delta \in (0,1)\,,
\end{equation*}
and the fact that $\Phi_{\D},\,1/\Phi_{\D}\in H_{\delta}(\D)$ imply
\begin{align}
\sup_{0\leq r<1} \int_{-\pi}^{\pi} e^{\ {\fo{\delta\left| f_{\D}(re^{i\theta}) \right|}}}\,d\theta
& \leq e^{\ {\fo{\delta\pi}}} \sup_{0\leq r<1} \int_{-\pi}^{\pi}
  e^{\ {\fo{\delta\left| \log|\Phi_{\D}(re^{i\theta})| \right|}}}\,d\theta \label{auxa21} \\
& \leq e^{\delta\pi} \sup_{0\leq r<1} \int_{-\pi}^{\pi}
  \left| \Phi_{\D}(re^{i\theta}) \right|^{\delta}\, d\theta
  + e^{\delta\pi} \sup_{0\leq r<1} \int_{-\pi}^{\pi}
  \left| \Phi_{\D}(re^{i\theta}) \right|^{-\delta}\,d\theta \notag \\
&= e^{\delta\pi}
   \left(\,\left\| \Phi_{\D} \right\|_{H_\delta(\D)}
   + \left\| 1/\Phi_{\D} \right\|_{H_\delta(\D)}\, \right) \notag \\
& < +\infty\,. \notag
\end{align}
Furthermore, the inequality
\begin{equation*}
x^{p} \leq \left( \frac{p}{\delta} \right)^p e^{\delta x}\,,\quad
x\geq0\,,\quad p>0\,,
\end{equation*}
applied to \eqref{auxa21} gives:
\begin{equation}\label{aux212}
\sup_{0\leq r<1} \int_{-\pi}^{\pi}
\left| f_{\D}(re^{i\theta}) \right|^p\,d\theta
\leq e^{\delta\pi} (p/\delta)^p\cdot
\left( \|\Phi_{\D}\|_{H_\delta(\D)} + \|1/\Phi_{\D}\|_{H_\delta(\D)} \right)
< +\infty\,,
\end{equation}
and so $f_{\D}\in H_p(\D)$ for all $p> 0$.
Thus, $f\in H_p(\H)$, $p>0$, which completes the proof of \eqref{3c}.

Finally,  \eqref{3b} follows from an application of the Fatou lemma
\cite[p.22]{rud} and \eqref{r1} to the inequality \eqref{auxa21},
\begin{equation*}
\int_{\R} \frac{e^{\ {\fo{\delta\,|f(x+i0)|}}}}{1+x^{2}}\,dx
= \int_{-\pi}^{\pi}
  e^{\ {\fo{\delta\left| f(\tan\lfrac{\theta}{2} + i0) \right|}}}\ \frac{d\theta}{2}
= \frac{1}{2} \int_{-\pi}^{\pi} 
  e^{\ {\fo{\delta\left| f_{\D}(e^{i\theta}) \right|}}}\,d\theta < +\infty\,,
\end{equation*}
and to \eqref{aux212} in a similar manner. \hfill$\square$

\subsubsection{Proof of \eqref{f1}.}\las{312}
According to \eqref{3b},
$\exp\big(\,|f(x+i0)|\,\big)\in L_{\delta}\big([-\Lambda,\Lambda],\,dx\big)$
for arbitrary  $\Lambda>0$ and $\delta\in(0,1)$.
Obviously,
\begin{equation}\label{3.1.1}
\frac{y}{(1+|x|)^{2}} + \frac{1}{y} \geq \frac{2}{1+|x|}\,,\quad
y>0\,,\quad x\in\R\,,
\end{equation}
and therefore, in view of \eqref{3b},
\begin{align*}
4\log(1+\Lambda)
&= \int_{-\Lambda}^{\Lambda} \frac{2}{1+|x|}\,dx \\[0.5cm]
&\leq \int_{-\Lambda}^{\Lambda} e^{\ {\fo{\pm\delta\cdot\RE f(x+i0)}}}\,dx
  + \int_{-\Lambda}^{\Lambda} \frac{e^{\ {\fo{\mp\delta\cdot\RE f(x+i0)}}}}{(1+|x|)^{2}}\,dx \\[0.5cm]
&\leq \int_{-\Lambda}^{\Lambda} e^{\ {\fo{\pm\delta\cdot\RE f(x+i0)}}}\,dx
  + \int_{\R} \frac{e^{\ {\fo{\delta\cdot|\RE f(x+i0)|}}}}{(1+|x|)^{2}}\,dx\,,
\end{align*}

\noindent \vspace{0.5cm}
from which we get the validity of \eqref{f1} by taking the limit as
$\Lambda\to+\infty$. \hfill$\square$

\vspace{0.2cm}
\subsection{Auxiliary lemma}\las{32}$\phantom{a}$ \\

\vspace{-0.1cm}

We first prove \eqref{5}.
Using \eqref{5ad} and the equality $\mu(1+0)=1$ we get
\begin{align*}
\int_{[0,1]} \log & \frac{1}{1-tz}\;d\mu(t)
+ \int_{[0,1]} \log\sqrt{1+t^2}\;d\mu(t) \\[0.2cm]
&= \int_{[0,1]} \log\frac{1-tz}{\sqrt{1+t^2}}\;d(1-\mu(t)) \\[0.2cm]
&= (1-\mu(t))\cdot\left.\log\frac{1-tz}{\sqrt{1+t^2}}\right|_{0-0}^{1+0} +
   \int_{[0,1]} (1-\mu(t))\;
   d\left( \lfrac{1}{2}\log(1+t^2) - \log(1-tz) \right) \\[0.2cm]
&= \int_0^1 (1-\mu(t))
   \left( \frac{t}{1+t^2} + \frac{z}{1-tz} \right)\;dt \\[0.2cm]
&= \int_0^1 \frac{(1-\mu(t))\,(t+z)}{(1+t^{2})\,(1-tz)}\;dt \\[0.2cm]
&= \int_1^\infty \frac{(1-\mu(1/t))\,(z+1/t)\;dt}{(1+1/t^2)\,(1-z/t)\,t^2} \\[0.2cm]
&= \int_1^\infty (1-\mu(1/t))\,\frac{(1+zt)\;dt}{(t^2+1)\,(t-z)} \\[0.2cm]
&= \int_1^\infty \left( \frac{1}{t-z}-\frac{t}{1+t^2} \right)\,
   (1-\mu(1/t))\;dt\,,
\end{align*}

\vspace{0.25cm} \noindent
as it was to be proved. \hfill$\square$

\medskip
In order to prove Theorem~\ref{th2} we will need the following assertion.


\vspace{0.25cm}
\begin{lemma}\las{lem4.1}
Let $\nu$ be a non-negative and non-decreasing function on $\R$ satisfying
\eqref{th2.2}.
Then
\begin{enumerate}
\item[{\rm{(1)}}]\quad
$\exists\,\lim_{t\to-\infty}\nu(t) =: \nu(-\infty) \geq 0$ {\rm{;}}
\vspace{0.15cm}\item[{\rm{(2)}}]\quad
$\exists\,\lim_{t\to+\infty} \nu(t)/t = 0$  {\rm{;}}
\vspace{0.15cm}\item[{\rm{(3)}}]\quad
\eqref{th2.4} holds.
\end{enumerate}
\end{lemma}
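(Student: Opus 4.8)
The plan is to establish the three claims in the stated order, since part (2) supplies exactly what is needed to complete part (3). Part (1) is immediate from monotonicity: as $t\to-\infty$ the values $\nu(t)$ decrease (because $\nu$ is non-decreasing) and stay $\geq 0$ (because $\nu$ is non-negative), so the monotone limit $\nu(-\infty):=\lim_{t\to-\infty}\nu(t)$ exists and is non-negative. No use of \eqref{th2.2} is required here.

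For part (2), the idea is to convert the pointwise ratio $\nu(t)/t$ into a tail of the convergent integral in \eqref{th2.2} by exploiting monotonicity on a dyadic block. For $t>1$ we have $\nu(s)\geq\nu(t)$ for all $s\in[t,2t]$, whence
\[
\int_t^{2t}\frac{\nu(s)}{s^2}\,ds \;\geq\; \nu(t)\int_t^{2t}\frac{ds}{s^2} \;=\; \frac{\nu(t)}{2t}.
\]
The left-hand side is dominated by $\int_t^{+\infty}\nu(s)s^{-2}\,ds$, which tends to $0$ as $t\to+\infty$ because the integral in \eqref{th2.2} converges. Hence $\nu(t)/(2t)\to 0$, giving $\lim_{t\to+\infty}\nu(t)/t=0$.

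For part (3), I would integrate by parts. Since $t^{-1}$ is positive and bounded on each $[1,R]$ and $d\nu$ is a non-negative measure finite on compacts, the map $R\mapsto\int_{[1,R]}t^{-1}\,d\nu(t)$ is non-decreasing, so it suffices to bound it uniformly. Applying the formula \eqref{5ad} with $g(t)=1/t$ and $\phi=\nu$ on $[1,R]$ yields
\[
\int_{[1,R]}\frac{d\nu(t)}{t} \;=\; \frac{\nu(R+0)}{R} - \nu(1-0) + \int_1^R\frac{\nu(t)}{t^2}\,dt .
\]
Letting $R\to+\infty$, the boundary term $\nu(R+0)/R$ vanishes by part (2), the integral $\int_1^R\nu(t)t^{-2}\,dt$ increases to the finite value guaranteed by \eqref{th2.2}, and $\nu(1-0)$ is finite; therefore the right-hand side converges, which is \eqref{th2.4} (and in fact $\int_1^{+\infty}t^{-1}\,d\nu(t)=\int_1^{+\infty}\nu(t)t^{-2}\,dt-\nu(1-0)$).

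I expect the only genuine difficulty to lie in part (2): convergence of $\int_1^{+\infty}\nu(t)t^{-2}\,dt$ does not by itself force $\nu(t)/t\to0$ without the monotonicity of $\nu$, so the dyadic comparison is where the hypothesis is truly used, and it is precisely what makes the boundary term in part (3) disappear. The remaining work is bookkeeping, apart from a little care with the endpoint conventions ($\nu(R+0)$, $\nu(1-0)$) for the Lebesgue--Stieltjes integration by parts in \eqref{5ad}.
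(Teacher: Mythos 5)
Your proof is correct, and parts (1) and (3) essentially coincide with the paper's argument: (1) is immediate from monotonicity, and (3) is the same integration by parts
\[
\int_{[1,R]}\frac{d\nu(t)}{t} \;=\; \frac{\nu(R+0)}{R} \;-\; \nu(1-0) \;+\; \int_1^R\frac{\nu(t)}{t^2}\,dt\,,
\]
with part (2) killing the boundary term as $R\to+\infty$ (you are in fact more careful than the paper about the endpoint conventions $\nu(R+0)$, $\nu(1-0)$ in \eqref{5ad}). Where you genuinely diverge is part (2). The paper argues by contradiction: assuming $\limsup_{t\to+\infty}\nu(t)/t=2a>0$, it extracts a sequence $t_n\uparrow+\infty$ with $\nu(t_n)\geq a\,t_n$, deduces from \eqref{th2.2} that $\sum_n\bigl(1-t_n/t_{n+1}\bigr)<+\infty$, and then shows by a telescoping-product estimate that this forces $t_{n+1}\leq t_1\exp\bigl(2\sum_k\theta_k\bigr)$ to remain bounded, contradicting $t_n\to+\infty$. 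Your dyadic comparison
\[
\frac{\nu(t)}{2t}\;=\;\nu(t)\int_t^{2t}\frac{ds}{s^2}\;\leq\;\int_t^{2t}\frac{\nu(s)}{s^2}\,ds\;\leq\;\int_t^{+\infty}\frac{\nu(s)}{s^2}\,ds\;\longrightarrow\;0
\]
replaces all of that with two lines: it is direct rather than by contradiction, uses only the vanishing of tails of a convergent integral, and even yields the quantitative bound $\nu(t)/t\leq 2\int_t^{+\infty}\nu(s)s^{-2}\,ds$, which the paper's sequential argument does not provide. Nothing in the paper's longer route buys additional information here, so your version is strictly more economical while proving the same statement.
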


\begin{proof}\noindent
Statement  (1) is trivial.
To prove (2) suppose on the contrary that
\begin{equation*}
2 a := \limsp\limits_{t\to +\infty} \nu(t)/t > 0\,.
\end{equation*}
Then there exists a sequence of monotonically  increasing positive numbers
\begin{equation*}
1 < t_1 < t_2 < \dots < t_n < t_{n+1} < \dots \,,\quad
t_n\to+\infty\,, \quad n\to\infty\,,
\end{equation*}
such that
\begin{equation*}
\frac{\nu(t_n)}{t_n} \geq a\,,\quad n\geq1\,.
\end{equation*}
Since $\nu$ is a non-decreasing non-negative function, and in view of the
hypothesis \eqref{th2.2}, we have
\begin{align*}
+\infty
&> \int_1^{+\infty} \frac{\nu(t)\,dt}{t^2}
   \geq \int_{t_1}^{+\infty} \frac{\nu(t)\,dt}{t^2}
   = \sum_{n=1}^\infty \int_{t_n}^{t_{n+1}} \frac{\nu(t)\,dt}{t^2} \\
&\geq \sum_{n=1}^\infty \nu(t_n) \int_{t_n}^{t_{n+1}} \frac{dt}{t^2}
   \geq \sum_{n=1}^\infty a\cdot t_n
   \left( \frac{1}{t_n} - \frac{1}{t_{n +1}} \right)
=  a \sum_{n=1}^\infty \left( 1 - \frac{t_n}{t_{n +1}} \right)\,.
\end{align*}
Whence we have
\begin{equation*}
\sum_{n=1}^\infty \theta_n<+\infty\,,\quad
\theta_n := 1-\frac{t_n}{t_{n +1}}\in(0,1)\,,\quad n\geq1\,.
\end{equation*}
Thus,  $\lim_{n\to\infty}\theta_n=0$ and without loss of generality we may
assume that $\theta_n\leq\frac{1}{2}$ for all $n \geq 1$.
Then, iterating $t_{n+1}=\frac{t_n}{1-\theta_n}$ we get
\begin{equation*}
t_{n +1}
= \frac{t_n}{1-\theta_n}
= \frac{t_{n-1}}{(1-\theta_n)\,(1-\theta_{n-1})} = \dots
= \frac{t_1}{(1-\theta_n)\,(1-\theta_{n-1}) \dots (1-\theta_1)}\;,
\end{equation*}
and
\begin{equation*}
\log\prod_{k=1}^n \frac{1}{1-\theta_k}
= \sum_{k=1}^n \log\left( 1+ \frac{\theta_k}{1-\theta_k} \right)
\leq \sum_{k=1}^n \frac{\theta_k}{1-\theta_k}
\leq 2\sum_{k=1}^n \theta_k
\leq 2\sum_{k=1}^\infty \theta_k\,.
\end{equation*}
Therefore
\begin{equation*}
t_{n+1}\leq t_1\cdot e^{{\fo{2\sum_{k=1}^\infty \theta_k }}}\,,
\end{equation*}
in contradiction to the fact that $t_n\to+\infty$ as $n\to\infty$,
and (2) is verified.

\smallskip
In proving (3) it suffices to observe that \eqref{th2.2} for $A>1$ implies
\begin{align*}
+\infty
& > \int_1^{+\infty} \frac{\nu(t)\,dt}{t^2}
  \geq \int_1^A \frac{\nu(t)\,dt}{t^2}
  = - \int_1^A \nu(t)\;d\,\frac{1}{t}  \\
& = - \left. \nu(t) \cdot \frac{1}{t} \right|_1^A
    + \int_1^A \frac{d\nu(t)}{t}
   = \nu(1) - \frac{\nu(A)}{A} + \int_1^A \frac{d\nu(t)}{t}\,.
\end{align*}
Using the result of item (2) of this lemma and letting $A\to+\infty$ gives
\begin{equation*}
+\infty > \nu(1) + \int_1^{+\infty} \frac{d\nu(t)}{t}\;.
\end{equation*}
This completes the proof of Lemma~\sref{lem4.1}.
\end{proof}

\vspace{0.25cm}
\subsection{Particular case of the generalized H{\"o}lder inequality}\las{33}$\phantom{a}$ \\

\vspace{-0.1cm}

In order to prove Theorem~\sref{th3} we need to use a special  case of the
generalized H{\"o}lder inequality established in \cite[(11), p.460]{mrsh}.

Let $\N := \{1,2,\dots\}$, $\R^{+}:= [0,+\infty)$, $A\times B\subset\R^2$
the Cartesian product of $A,B\in\mathcal{B}(\R)$ \cite[p.8]{roy} and
$\mu_1\times\mu_2$ the product of $\mu_1,\mu_2\in\mathcal{M}^{+}(\R)$
\cite[p.304]{roy}.


\vspace{0.25cm}
\begin{lemma}\las{lem32}
Let $\Omega,Q\in\mathcal{B}(\R)$ be fixed and
$\rho,\omega\in\mathcal{M}^{+}(\R)$ such that
\begin{equation}\label{lem32.1}
0<\rho(\Omega)<1\,\quad 0<\omega(Q)<\infty\,.
\end{equation}
Let furthermore $\Psi:Q\times\Omega\to\R^{+}\cup\{+\infty\}$ be a Borel
measurable function \cite[Def.11.13, p.310]{rud1} such that
$\Psi(x,t)^{{\fo{\rho(\Omega)}}}\in L_1(Q,d\omega(x))$ for  all $t\in \Omega$ and
\begin{equation}\label{lem32.2}
{\rm (a)}\quad
\inf_{t\in\Omega} \Psi(x,t)>0\,,\ x\in Q\,; \qquad
{\rm (b)}\quad
D_{\Psi} := \sup_{t\in\Omega} \int_Q \Psi(x,t)^{{\fo{\rho(\Omega)}}}\,d\omega(x)<\infty\,.
\end{equation}
Then $\log\Psi(x,t)\in L_1(\Omega,d\rho(t))$ for almost all $x\in Q$ with
respect to the measure $\omega$ and
\begin{equation}\label{lem32.3}
\int_Q \exp\left( \int_\Omega
\log\Psi(x,t)\,d\rho(t) \right)\,d\omega(x)\leq D_{\Psi}\,.
\end{equation}
\end{lemma}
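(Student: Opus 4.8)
The plan is to reduce \eqref{lem32.3} to Jensen's inequality for the convex exponential function combined with Tonelli's theorem. Write $s:=\rho(\Omega)\in(0,1)$ and introduce the \emph{probability} measure $\widetilde\rho:=\rho/s$ on $\Omega$, so that $\widetilde\rho(\Omega)=1$ and $\int_\Omega g\,d\rho=s\int_\Omega g\,d\widetilde\rho$ for every $\rho$-integrable $g$. Note that by \eqref{lem32.1} the restrictions $\omega|_Q$ and $\rho|_\Omega$ are finite measures, so every application of Tonelli below is legitimate, the joint (Borel) measurability of $\Psi$ guaranteeing measurability of the integrands.

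First I would settle the integrability assertion. Since $(x,t)\mapsto\Psi(x,t)^{s}$ is non-negative and Borel measurable on $Q\times\Omega$, Tonelli's theorem and hypothesis \eqref{lem32.2}(b) give
\[
\int_Q\Big(\int_\Omega\Psi(x,t)^{s}\,d\rho(t)\Big)\,d\omega(x)
=\int_\Omega\Big(\int_Q\Psi(x,t)^{s}\,d\omega(x)\Big)\,d\rho(t)
\leq D_\Psi\cdot\rho(\Omega)<+\infty.
\]
Hence the inner integral $\int_\Omega\Psi(x,t)^{s}\,d\rho(t)$ is finite for $\omega$-almost every $x\in Q$. Using the elementary bound $(\log u)^{+}\leq s^{-1}u^{s}$ for $u>0$ (which follows from $\log v\leq v-1$ applied to $v=u^{s}$), the positive part of $t\mapsto\log\Psi(x,t)$ is $\rho$-integrable for those $x$; and by hypothesis \eqref{lem32.2}(a) one has $\Psi(x,t)\geq c_x:=\inf_{t\in\Omega}\Psi(x,t)>0$, so the negative part is bounded by the constant $|\log c_x|$, which is $\rho$-integrable because $\rho(\Omega)<\infty$. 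Therefore $\log\Psi(x,\cdot)\in L_1(\Omega,d\rho)$ for $\omega$-a.e. $x\in Q$, as claimed.

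Next I would prove the inequality itself. Fix such an $x$ and set $g(t):=s\log\Psi(x,t)=\log\Psi(x,t)^{s}$, so that $e^{g(t)}=\Psi(x,t)^{s}$. Since $\exp$ is convex and $\widetilde\rho$ is a probability measure, Jensen's inequality yields
\[
\exp\Big(\int_\Omega\log\Psi(x,t)\,d\rho(t)\Big)
=\exp\Big(\int_\Omega g(t)\,d\widetilde\rho(t)\Big)
\leq\int_\Omega e^{g(t)}\,d\widetilde\rho(t)
=\frac{1}{s}\int_\Omega\Psi(x,t)^{s}\,d\rho(t).
\]
Integrating this pointwise bound over $Q$ against $\omega$, applying Tonelli once more and invoking \eqref{lem32.2}(b), I obtain
\[
\int_Q\exp\Big(\int_\Omega\log\Psi(x,t)\,d\rho(t)\Big)\,d\omega(x)
\leq\frac{1}{s}\int_\Omega\Big(\int_Q\Psi(x,t)^{s}\,d\omega(x)\Big)\,d\rho(t)
\leq\frac{1}{s}\,D_\Psi\,\rho(\Omega)=D_\Psi,
\]
which is precisely \eqref{lem32.3}.

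The argument is conceptually short, so I expect the main (and only modest) obstacle to be the bookkeeping that makes Jensen applicable. One must handle the admissible value $\Psi=+\infty$: on the $\omega$-null exceptional set the pointwise bound is vacuous, while for the remaining $x$ the finiteness of $\int_\Omega\Psi(x,t)^{s}\,d\rho(t)$ forces $\{t:\Psi(x,t)=+\infty\}$ to be $\rho$-null, so $\log\Psi(x,\cdot)$ is finite $\rho$-a.e. and genuinely lies in $L_1(\Omega,d\widetilde\rho)$. The delicate point is thus to observe that hypothesis \eqref{lem32.2}(a) controls the negative part of $\log\Psi$ and hypothesis \eqref{lem32.2}(b) controls its positive part, together guaranteeing the integrability on which the pointwise application of Jensen's inequality rests.
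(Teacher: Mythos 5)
Your proof is correct, and it takes a genuinely different, more elementary route than the paper's. Both arguments rest on Jensen's inequality for the exponential with respect to the normalized probability measure $P=\rho/\rho(\Omega)$ on $\Omega$, but the paper implements the Marshall--Olkin scheme for the generalized H\"older inequality: it truncates, setting $\Phi_n:=\min\{n,\Psi^{\rho(\Omega)}\}$, applies Jensen to the normalized integrand $\log\Phi_n(x,t)-\log\int_Q\Phi_n(u,t)\,d\omega(u)$, integrates in $x$ and uses Fubini to obtain a bound by $1$, which yields the geometric-mean estimate
\[
\int_Q\exp\left(\int_\Omega\log\Phi_n\,dP\right)d\omega
\leq\exp\left(\int_\Omega\log\left(\int_Q\Phi_n\,d\omega\right)dP\right),
\]
then applies Jensen a \emph{second} time to pass from the geometric to the arithmetic mean, and finally removes the truncation (and proves the integrability assertion) with Beppo Levi's monotone convergence theorem. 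You bypass the truncation and the normalization trick altogether: one Tonelli application together with hypothesis \eqref{lem32.2}(b) shows $\int_\Omega\Psi(x,t)^{\rho(\Omega)}\,d\rho(t)<+\infty$ for $\omega$-a.e.\ $x\in Q$; the elementary bounds $(\log u)^{+}\leq\rho(\Omega)^{-1}u^{\rho(\Omega)}$ and $\log\Psi(x,\cdot)\geq\log\inf_{t\in\Omega}\Psi(x,t)$ (the latter finite by \eqref{lem32.2}(a)) then give the $L_1(\Omega,d\rho(t))$ claim by domination; and a single pointwise Jensen followed by a second Tonelli gives \eqref{lem32.3}, the constants matching because $\rho(\Omega)^{-1}D_\Psi\,\rho(\Omega)=D_\Psi$. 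The trade-off is that your chain proves only the arithmetic-mean bound $\int_\Omega\bigl(\int_Q\Psi^{\rho(\Omega)}\,d\omega\bigr)\,dP\leq D_\Psi$, whereas the paper's intermediate step is the sharper geometric-mean inequality it cites from Marshall--Olkin; but the lemma as stated, and every later use of it in the paper (Lemma~\ref{l6.1} and the estimates in the proof of Theorem~\ref{th3}), needs only the bound by $D_\Psi$, so nothing is lost, and your handling of the integrability assertion (domination rather than truncation plus monotone convergence) is noticeably shorter.
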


\begin{proof}
Let $\Phi(x,t):=\Psi(x,t)^{\rho(\Omega)}$,
$\gamma(x):=\min\{1,\inf_{t\in\Omega}\Phi(x,t)\}$
and $P (A):=\rho (A)/\rho(\Omega)$, $A\subset\Omega$, $A\in\mathcal{B}(\R)$.
In view of \eqref{lem32.2}(a), $\gamma (x)\in(0,1]$ for all $x\in Q$ and
therefore we have $\gamma:=\int_Q \gamma(x)\,d\omega(x)\in(0,\omega(Q)]$
by \eqref{lem32.1}.
We introduce the functions
\begin{equation*}
\Phi_n(x,t) := \min\{n,\Phi(x,t)\}\,,\qquad
\varphi_n(t) := \int_Q \Phi_n(u,t)\,d\omega(u)\,,\quad
x\in Q\,,\ t\in\Omega\,,\ n\in\N\,.
\end{equation*}
From $0<\gamma(x)\leq\Phi_n(x,t)\leq n$ it follows that for every
$x\in Q$ and $t\in\Omega$ the following inequalities hold:
\begin{equation*}
-\infty < \log \gamma \leq \log\varphi_n(t) \leq \log(n\cdot\omega(Q))\,,
\qquad -\infty < \log \gamma(x) \leq \log\Phi_n(x,t) \leq \log n\,.
 \end{equation*}
Thus, for each $n\in\N$ and $x\in Q$, $\log\Phi_n(x,t)$,
$\log\varphi_n(t)\in L_1(\Omega,\,dP(t))$ and hence it is possible to define
the finite-valued functions
\begin{align*}
h_n(x,t)
&:= \log\Phi_n(x,t) - \log\int_Q \Phi_n(u,t)\,d\omega(u)\,, \\
H_n(x)
&:= \int_\Omega h_n(x,t)\,dP(t) = \int_\Omega\log\Phi_n(x,t)\,dP(t)
    - \int_\Omega\left( \log\int_Q \Phi_n(u,t)\,d\omega(u) \right)\,dP(t)\,,
\end{align*}
for all $t\in\Omega$, $x\in Q$  and $n\in\N$.
Jensen's inequality \cite[Lem.6.1, p.34]{gar} applied to the exponential
function gives for all $x\in Q$ and $n\in\N$,
\begin{align*} 
\exp\left( \int_\Omega h_n(x,t)\,dP(t)\right)
& \leq \int_\Omega \exp\left( h_n(x,t) \right)\,dP(t) \\[0.2cm]
&= \int_\Omega \exp\left( \log\Phi_n(x,t) -
   \log\int_Q \Phi_n(u,t)\,d\omega(u) \right)\,dP(t) \\[0.2cm]
&= \int_\Omega \left[
   \frac{\Phi_n(x,t)}{\int_Q \Phi_n(u,t)\;d\omega(u)}\right]\,dP(t) \\[0.2cm]
&= \int_\Omega \left[ \frac{\Phi_n(x,t)}{\varphi_n(t)} \right]\,dP(t)\,,
\end{align*}
i.e.,
\begin{equation}\label{lem32.4}
e^{H_n(x)} \leq \int_{\Omega}
\left[ \frac{ \Phi_n(x,t)}{\int_Q \Phi_n(u,t)\,d\omega(u)}\right]\,dP(t)
= \int_\Omega \left[ \frac{\Phi_n(x,t)}{\varphi_n(t)} \right]\,dP(t)\,,
\quad x\in Q\,,\ n\in\N\,,
\end{equation}

\vspace{0.15cm} \noindent
where the function $\Phi_n(x,t)/\varphi_n(t)$, being positive and bounded
above by $n/\gamma$ on $Q \times \Omega$, belongs to
$L_1(Q\times\Omega,d(\omega\times P)(x,t))$.
Integrating \eqref{lem32.4} over $x\in Q$ with respect to the measure
$\omega$ we obtain by  Fubini's theorem \cite[p.307]{roy}
\begin{equation*}
\int_Q e^{H_n(x)}\,d\omega(x)
\leq \int_Q \int_{\Omega}
\frac{\Phi_n(x,t)}{\int_Q\Phi_n(u,t)\,d\omega(u)}\;dP(t)\,d\omega(x)
= \int_{\Omega} \dfrac{\int_Q \Phi_n(x,t)\,d\omega(x)}
  {\int_Q \Phi_n(u,t)\,d\omega(u)}\;dP(t) = 1\,,
\end{equation*}
from which it follows that
\begin{equation*}
\int_Q \exp\left\{ \int_\Omega \log\Phi_{n}(x,t)\,dP(t) - \int_\Omega\left(
\log\int_Q \Phi_{n}(u,t)\,d\omega(u)\right)\,dP(t) \right\}\,d\omega(x)
\leq 1 \, ,
\end{equation*}
i.e.,
\begin{equation*}
\int_Q \exp\left\{ \int_\Omega \log\Phi_{n}(x,t)\,dP(t)\right\}\;d\omega(x)
\leq \exp\left\{ \int_\Omega\left( \log\int_Q \Phi_{n}(u,t)\;d\omega(u)
\right)\,dP(t) \right\}\,.
\end{equation*}
Applying Jensen's inequality to the right-hand side of the last inequality
we obtain
\begin{align*}
\int_Q \exp & \left\{
  \int_\Omega \log\Phi_n(x,t)\,dP(t) \right\}\,d\omega(x)
  \leq \int_\Omega \exp \left( \log\int_Q
  \Phi_n(u,t)\,d\omega(u) \right) dP(t) \\[0.2cm]
&= \int_\Omega
  \left( \int_Q \Phi_n(u,t)\,d\omega(u) \right)\,dP(t)
  \leq \int_\Omega \left( \int_Q \Phi(u,t)\,d\omega(u) \right)\,dP(t) \\[0.2cm]
&= \int_\Omega
  \left( \int_Q \Psi(u,t)^{\rho(\Omega)}\,d\omega(u) \right)\,dP(t)
  \leq \int_\Omega D_\Psi\,dP(t) = D_\Psi\,.
\end{align*}

\vspace{0.15cm} \noindent
Summarizing, we have obtained the upper bound
\begin{equation}\label{lem32.5}
\int_{\Omega} \exp\left( 
\int_{\Omega} \log \Phi_n(x,t)\;dP(t) \right)\,  
d\omega(x) \leq D_{\Psi}\,,\quad n\in\N\,.
\end{equation}
Inequalities $0<\gamma(x)\leq\Phi_{n}(x,t)\leq\Phi_{m}(x,t)\leq m$,
which are valid for all $(x,t)\in Q\times\Omega$ and $m\geq n\geq 1$,
yield that $\Phi_{n}(x,t)$, $\log\Phi_{n}(x,t)$ and
$\exp\big(\int_{\Omega}\log\Phi_{n}(x,t)\,dP(t)\big)$, $n\in\N$,
are pointwise monotone non-decreasing sequences of finite-valued measurable
functions.
Hence, their limits as $n\to \infty$ exist, regardless of whether they are
finite or equal to $+\infty$.
Furthermore, the first two of these limits equal to $\Phi(x,t)$ and
$\log\Phi(x,t)$, respectively.
Now let us define:
\begin{equation*}
\psi(x):= \lim_{n\to\infty} \int_{\Omega}\log\Phi_{n}(x,t)\,dP(t)\,,\quad x\in Q\,.
\end{equation*}
It is clear that for each $x\in Q$
\begin{equation*}
-\infty
< \log\gamma(x)
= \int_\Omega \log\gamma(x)\,dP(t)
\leq \int_\Omega \log\Phi_n(x,t)\;dP(t)
\leq \lim_{n\to\infty} \int_\Omega \log\Phi_n(x,t)\;dP(t)
= \psi(x)\,.
\end{equation*}
Beppo Levi's theorem \cite[p.305]{kol} applied to the inequalities
\eqref{lem32.5} gives
\begin{align*}
\int_Q \exp\left( \psi(x) \right)\;d\omega(x)
&= \int_Q \exp\left(
   \lim_{n\to\infty}\int_{\Omega}\log\Phi_{n}(x,t)\,dP(t)
   \right)\,d\omega(x) \\[0.2cm]
&= \int_Q \lim_{n\to\infty}\exp\left(
   \int_{\Omega}\log\Phi_{n}(x,t)\,dP(t)
   \right)\,d\omega(x) \\[0.2cm] &
= \lim_{n\to\infty} \int_Q \exp\left(
   \int_{\Omega}\log\Phi_{n}(x,t)\,dP(t)
   \right)\,d\omega(x) \leq D_\Psi \ ,
\end{align*}
i.e., summarizing,
\begin{equation}\label{lem32.6}
\int_{\Omega}\exp\big( \psi (x) \big)\;d\omega(x) \leq D_{\Psi}\,,\quad x\in Q\,.
\end{equation}
Let $Q_\psi^\infty:=\big\{x\in Q\mid \psi(x)=+\infty\big\}$ and
$\widehat{Q}:=Q\setminus Q_\psi^\infty$.
Then (\ref{lem32.6}) implies that $\omega\left(Q_\psi^\infty\right)=0$.

Now, for each fixed $x\in\widehat{Q}$ one can again apply Beppo Levi's
theorem \cite[p.305]{kol} to the non-decreasing sequence of integrable
functions $n\geq\log\Phi_n(x,\cdot) > \log\gamma(x) > -\infty$, $n\in\N$,
satisfying
\begin{equation*}
-\infty < \log\gamma(x) \leq \int_\Omega \log\Phi_n(x,t)\;dP(t)
\leq \psi(x)  < +\infty\,,
\end{equation*}
to get $\log\Phi(x,t)\in L_1(\Omega,dP(t))$,
\begin{equation*}
\int_\Omega \log\Phi(x,t)\,dP(t)
= \int_\Omega \lim_{n\to\infty}\log\Phi_n(x,t)\,dP(t)
= \lim_{n\to\infty}\int_\Omega \log\Phi_n(x,t)\,dP(t)
= \psi(x) < +\infty\;,
\end{equation*}
and hence, $\log\Psi(x,t)\in L_1(\Omega,d\rho (t))$.
Thus,
\begin{equation*}
\psi(x) = \int_{\Omega} \log\Phi(x,t)\,dP(t)
= \int_{\Omega} \log\Psi(x,t)\,d\rho(t)\,.
\end{equation*}
Substitution of the last expression for $\psi$ in \eqref{lem32.6}
proves \eqref{lem32.3} and completes the proof of Lemma~\ref{lem32}.
\end{proof}


\vspace{0.45cm}
\section{Proofs of the main assertions}

\vspace{0.25cm}
\subsection{Proof of Theorem~\ref{th1} }

\subsubsection{Proof of item 1.}

For a non-negative function $V$, which is harmonic in the upper half plane
$\H$, the following well known representation holds (see \cite[p.107]{ko},
\cite[Th.II, p.20]{D}):
there exists a measure $\sigma\in\mathcal{M}^{+}(\R)$ with
\begin{equation} \label{3.1}
\int_{-\infty}^{+\infty} \frac{d\sigma(t)}{1+t^2} < +\infty\,,
\end{equation}
and a constant $\alpha\geq 0$ such that
\begin{equation}\label{3.2}
V(x,y) = \alpha y + \int_{-\infty}^{+\infty}
\frac{y}{y^2 + (x-t)^2}\;d\sigma(t)\,,\quad y>0\,,\quad x\in\R\,.
\end{equation}
Since for every $y > 0$ and
$x\in \R$ we have
\begin{align}
\frac{V(x +\varepsilon,y) - V(x,y)}{\varepsilon}
&= \frac{1}{\varepsilon} \left(
   \int_{-\infty}^{+\infty} \frac{y}{y^2+(x-t+\varepsilon)^2}\;d\sigma(t)
   -\int_{-\infty}^{+\infty} \frac{y}{y^2+(x-t)^2}\;d\sigma(t) \right) \label{3.2a} \\
&= \int_{-\infty}^{+\infty} \frac{1}{\varepsilon} \left(
   \frac{y}{y^2+(x-t+\varepsilon)^2}
   -\frac{y}{y^2+(x-t)^2} \right)\,d\sigma(t)\nonumber  \\
&= \int_{-\infty}^{+\infty}
   \frac{-y\,\varepsilon-2y\,(x-t)}
        {\big( y^2+(x-t+\varepsilon)^2 \big)\,\big( y^2+(x-t)^2 \big)}\;
   d\sigma(t)\nonumber  \\
& \longrightarrow \int_{-\infty}^{+\infty}
  \frac{-2y\,(x-t)}{\left( y^2+(x-t)^2 \right)^2}\; d\sigma(t)
  \quad\text{as $\varepsilon\to 0$\,.} \nonumber
\end{align}
Since by hypothesis $V_{x}(x,y)\geq0$ for $x\in\R$ and $y>0$, it follows that
\begin{equation} \label{3.3}
\frac{\partial}{\partial x} V(x,y)
= \int_{-\infty}^{+\infty} \frac{\partial}{\partial x}
  \left( \frac{y}{y^2 + (x-t)^2} \right)\;d \sigma(t) \geq 0\,,\quad
  y>0\,,\quad x\in\R\,.
\end{equation}
For any two real points $-\infty < x_1 < x_2 < +\infty$ let
\begin{equation*}
\Delta_{x_2}^{x_1}(x) 
= (x-x_1) \cdot\chi_{\raisebox{-4pt}{{\footnotesize $[\,x_1,(x_1+x_2)/2\,)$}}}(x) +
(x_2 - x) \cdot \chi_{\raisebox{-4pt}{{\footnotesize $[\,(x_1+x_2)/2, x_2\,]$}}}(x)\,,
\quad x\in\R\,.
\end{equation*}
It is obvious that $\Delta_{x_2}^{x_1}\in C(\R)$ and
\begin{align}
\int_{x_1}^{x_2} \Delta_{x_2}^{x_1}(x)
& \cdot
  \frac{\partial}{\partial x} \left( \frac{y}{y^2+(x-t)^2} \right)\,dx
  \label{3.4b} \\
& = \int_{x_1}^{x_2} \Delta_{x_2}^{x_1}(x)\;
  d\,\frac{y}{y^2 + (x-t)^2}
  = -\int_{x_1}^{x_2} \frac{y}{y^2+(x-t)^2}\;d\,\Delta_{x_2}^{x_1}(x)
  \nonumber \\
&= -\int_{x_1}^{\tfrac{x_1+x_2}{2}} \frac{y}{y^2+(x-t)^2}\;dx
   +\int_{\tfrac{x_1+x_2}{2}}^{x_2}\, \frac{y}{y^2 + (x-t)^2}\;dx \nonumber \\
&= \arctan\frac{x_2-t}{y} + \arctan\frac{x_1-t}{y}
   - 2\arctan\frac{\lfrac{x_1+x_2}{2}-t}{y} \ \ , \qquad\quad t\in\R\,.
  \nonumber
\end{align}
For any $y>0$, $-\infty<a<b<+\infty$ and $t\in\R$ a brief calculation
using the formulas \cite[4.4.34, p.80, 4.3.80, p.75]{ab}  gives
\begin{equation}\label{3.4a} 
0 < \arctan\frac{b-t}{y} - \arctan\frac{a-t}{y}
\leq
\begin{cases}
  \pi\,, & \quad \text{if $a-1\leq t\leq b+1$}\,, \\[0.2cm]
  y \cdot \dfrac{b-a+1+a^2+b^2}{1+t^2}\,,
         & \quad \text{if $t\notin[a-1,b+1]$}\,.
\end{cases}
\end{equation}
Thus, \eqref{3.3}, \eqref{3.4b},  \eqref{3.4a} and Fubini's theorem  yield
\begin{align}
0
&\leq \int_{x_1}^{x_2} \Delta_{x_2}^{x_1}(x) \cdot
   \frac{\partial }{\partial x}V(x,y) \;dx \label{3.4} \\
&= \int_{x_1}^{x_2}\Delta_{x_2}^{x_1}(x) \cdot
   \left( \int_{-\infty}^{+\infty} \frac{\partial}{\partial x}
   \left( \frac{y}{y^2+(x-t)^2} \right)\,d\sigma(t)\right)\,dx \nonumber \\
&= \int_{-\infty}^{+\infty} \left(
   \int_{x_1}^{x_2}\Delta_{x_2}^{x_1}(x) \cdot \frac{\partial}{\partial x}
   \left( \frac{y}{y^2+(x-t)^2} \right)\,dx \right)\,d\sigma(t)\nonumber \\
&= \int_{-\infty}^{+\infty} \left(
   \arctan\frac{x_2-t}{y} - \arctan\frac{(x_1+x_2)/2\,-\,t}{y} \right)\,d\sigma(t)
   \nonumber \\
& -\int_{-\infty}^{+\infty}  \left(
   \arctan\frac{(x_1+x_2)/2\,-\,t}{y} -
   \arctan\frac{x_1-t}{y}\right)\,d\sigma(t)\,. \nonumber
\end{align}
It is obvious that for arbitrary $-\infty < a < b < +\infty$
\begin{equation*}
\lim_{{\fo{y \downarrow 0}}}
\left( \arctan\frac{b-t}{y} - \arctan\frac{a-t}{y}\right)
= \begin{cases}
    0, & t < a\,, \\
    {\pi}/{2}, & t = a\,, \\
    \pi , &  t \in (a,b)\,, \\
   {\pi}/{2}, & t =b\,, \\
    0, & t > b\,.
\end{cases}
\end{equation*}
Due to \eqref{3.4a} we are able to apply Lebesgue's dominated convergence
theorem \cite[11.32, p.321]{rud} for \eqref{3.4} when $y \downarrow 0$ in
order to get the following inequality
\begin{align*}
0 \leq \lfrac{\pi}{2} & \left( \sigma(\{x_2\})
+ \sigma \left( \left\{ \lfrac{x_1+x_2}{2} \right\} \right) \right)
+ \pi\,\sigma \left( \left( \lfrac{x_1+x_2}{2}, x_2 \right) \right) \\[0.2cm]
&- \lfrac{\pi}{2} \left( \sigma( \{x_1\} )
+ \sigma \left( \left\{ \lfrac{x_1+x_2}{2} \right\} \right) \right)
- \pi\,\sigma \left( \left( x_1,\lfrac{x_1+x_2}{2} \right) \right)\,,
\end{align*}
from which we get
\begin{align*}
0 \leq \sigma(x_2+0)
& - \sigma(x_2-0)  - \sigma(x_1+0) + \sigma(x_1-0) \\[0.2cm]
& + 2 \sigma(x_2-0) - 2 \sigma\left(\lfrac{x_1+x_2}{2}+ 0\right) -
    2 \sigma\left(\lfrac{x_1+x_2}{2}- 0\right) + 2 \sigma(x_1+0) \\[0.2cm]
&= \sigma(x_2+0)
   + \sigma(x_2-0) + \sigma(x_1+0) + \sigma(x_1-0) \\[0.2cm]
& \qquad\quad - 2\left( \sigma\left(\lfrac{x_1+x_2}{2}+ 0\right)
   + \sigma\left(\lfrac{x_1+x_2}{2} - 0\right)\right)\,.
\end{align*}
As explained in subsubsection~\ref{21}, the function $\sigma(x)$
can be chosen to satisfy \eqref{aux2} and therefore the latter
inequality implies
\begin{equation}\label{3.5}
\sigma\left(\frac{x_1+x_2}{2} \right) \leq \frac{\sigma(x_1)+\sigma(x_2)}{2}\,,
\quad -\infty < x_1 < x_2 < +\infty\,.
\end{equation}
But $\sigma (x)$, being  a nondecreasing function on $\R$, is  bounded above
by $\sigma(A)$ in $(-\infty,A]$ for every $A>0$.
Thus, in view of \cite[111, p.91]{har}, the property \eqref{3.5} means  that
$\sigma (x)$ is convex i.e.,
\begin{equation*}
\sigma\left( \theta x_1 + (1-\theta) x_2 \right)
\leq \theta \sigma(x_1) + (1-\theta) \sigma(x_2)\qquad
\forall\,x_1,x_2\in\R\quad \forall\,\theta\in [0,1]\,.
\end{equation*}
Then, on each segment $[-A,A]$, $A>0$, the function $\sigma$  satisfies a
Lipschitz condition and is thus absolutely continuous
(see \cite[Th.1, p.230]{nat2}, \cite[p.244]{nat}).
Furthermore, by \cite[111, p.91]{har},  the one-sided derivatives of
$\sigma (x)$ exist at every point $x\in\R$
\begin{equation}\label{3.5a}
\nu(x+0) := \lim_{h \downarrow 0} \lfrac{\sigma(x+h) - \sigma(x)}{h}\,,\qquad
\nu(x-0) := \lim_{h \downarrow 0} \lfrac{\sigma(x) - \sigma(x-h)}{h}\,,
\end{equation}
and moreover,
since the function $\sigma$ is convex and nondecreasing on $\R$,
its derivative $\nu$ satisfies:
\begin{equation*}
0\leq\nu(x-0)\leq\nu(x+0)\leq\nu(y-0)\leq\nu(y+0)\quad
\text{for}\quad -\infty<x<y< +\infty\,.
\end{equation*}
Putting $\nu(x)$ to be an arbitrary number in the interval
$[\nu(x-0),\nu(x+0)$, $x\in\R$, we get a non-negative nondecreasing function
$\nu $ on  $\R$ which according to \cite[Prop.4.9, p.202; (4.27), p.203]{mp}
satisfies
\begin{equation*}
\sigma(E) = \int_{E}\nu(x)\,dx\,,\quad
\text{for any bounded Borel set $E\subset\R$}\,.
\end{equation*}
The change of variable theorem \cite[Cor 4.3, p.214]{mp} applied to
\eqref{3.1} and \eqref{3.2}, gives \eqref{th1.2} and \eqref{th1.3},
respectively.

\smallskip
We now prove the inverse statement. For any $x\in \R$ and $h> 0$ the property
\begin{equation*}
 \alpha y + \int_{-\infty}^{+\infty} \frac{y}{y^2 + (x-t)^2} \ \nu (t) \ d t \leq
\alpha y  +  \int_{-\infty}^{+\infty} \frac{y}{y^2 + (x+h -t)^2} \ \nu (t) \ d t \ ,
\end{equation*}
or, what is the same,
\begin{equation*}
  \int_{-\infty}^{+\infty} \frac{y}{y^2 + (x-t)^2} \ \nu (t) \ d t \leq
  \int_{-\infty}^{+\infty} \frac{y}{y^2 + (x+h -t)^2} \ \nu (t) \ d t =
 \int_{-\infty}^{+\infty} \frac{y}{y^2 + (x -t)^2} \ \nu (t+h) \ d t \ ,
\end{equation*}
follows from the evident inequality
\begin{equation*}
 \int_{-\infty}^{+\infty} \frac{y}{y^2 + (x -t)^2} \  ( \nu (t+h) - \nu (t) ) \ d t
\geq 0 \ .
\end{equation*}
Since \eqref{3.2} is unique (see \cite[Th.II, p.20]{D}) and the Radon-Nikodym
derivative $\nu = d \sigma / d m$ is determined almost everywhere on $\R$ we
have  the uniqueness of \eqref{th1.3}  provided that all functions in the
class $\aleph_{\,\nu}$, defined in \eqref{aux4}, are identified.

\vspace{0.25cm}
\subsubsection{Proof of item 2.}

Since $V_x (x,y)$ is a non-negative function harmonic  in $\H$ it
follows from the mean value
property of harmonic functions \cite[(1.5.4), p.34]{hk} that either
$V_x (x,y)> 0$ or $V_x (x,y)= 0$ everywhere in $\H$.

Assume  that  $V_x (x,y) \equiv 0$ in $\H$.
Then, the leftmost integral in \eqref{3.4} would be zero and
hence equality would be attained in \eqref{3.5} for all
$-\infty < x_1 < x_2 < +\infty$.
As $\sigma (x)$ is nondecreasing and finite on $\R$,
this means by \cite[111, p.91]{har} that $\sigma (t) = a t + b$
for some $a \geq 0$ and $b \in \R$.  Using \eqref{3.2} and
\eqref{3.5a} we get $V (x,y) = \alpha y  +  a \cdot \pi$ and
$\nu (x)  \equiv a $ for all  $y > 0$ and $x \in \R$,
 as stated in Theorem~\sref{th1}.
Conversely, if $\nu  \equiv a \geq 0$ then we get from \eqref{th1.3}
that $V (x,y) = \alpha y  +  a \cdot \pi$ and so  $V_{x} (x, y) \equiv 0$.
Finally, if for some $\alpha, a \geq 0$ we have $V (x,y) = \alpha y  +
 a \cdot \pi$ for all $y > 0$ and $x \in \R$, then the uniqueness of
the representation \eqref{th1.3} proved above gives  $\nu  \equiv a $.
The proof is complete. \hfill $\Box$

\vspace{0.5cm}
\subsection{Proof of Theorem~\ref{th2}}

\subsubsection{Proof of item 1.}

If $\Phi \in  \mathcal{P}_{\int} \cap \mathcal{P} \subset \mathcal{P}$,
then according to \cite[Th.II, p.20]{D} it has a unique canonical
representation of the form \eqref{2} and $V = V^{\Phi}$ satisfies the
conditions of Theorem~\sref{th1}. Since the measure $\sigma$ is completely
determined by the function  $V^{\Phi}$    \cite[Th.II, p.21]{D}, applying
Theorem~\ref{th1} we get the representation \eqref{th2.1} for $\Phi$.
Conversely, a function $\Phi$ with the integral representation
\eqref{th2.1} has an imaginary part $V$ with the representation
\eqref{th1.3} and therefore $\Phi \in  \mathcal{P}_{\int} \cap \mathcal{P}$,
due to the results of Theorem~\sref{th1}.

In view of \eqref{aux4} and \eqref{aux11} it is possible to replace
$\nu$ in \eqref{th2.1} and \eqref{th2.3}  by an arbitrary function  from
the class $\aleph_{\,\nu} $  defined in \eqref{aux4}.
Since \eqref{th2.1}  is a particular case of the unique representation
\eqref{2} and  \eqref{th2.3} is equivalent to \eqref{th2.1} we get
the uniqueness of \eqref{th2.1} and \eqref{th2.3} provided that  all
functions in $\aleph_{\,\nu}$ are identified.

The representation \eqref{th2.3} can be easily established integrating
\eqref{th2.1} by parts .
Since \eqref{th2.4} holds true because of Lemma~\ref{lem4.1},(3) and since
for $z\in\H$ and $|t|\to+\infty$ the integrand in \eqref{th2.3} can be
estimated as
\begin{align*}
\log\frac{t-z}{\sqrt{1+t^2}}
&= \log\frac{ {\rm sign}(t)-z/|t| }{\sqrt{1+1/t^2}} \\
&= \log\left( {\rm sign}(t)-z/|t| + O(1/t^2) \right)
 = \begin{cases}
    -z/|t| + O(1/t^2)\,,        & t>0\,, \\
    -i\pi + z/|t| + O(1/t^2)\,, & t<0\,,
   \end{cases}
\end{align*}
it follows that the integral  in \eqref{th2.3} converges absolutely.
Thus, an application of Lemma \ref{lem4.1}, (1)-(2) to \eqref{th2.1}
gives
\begin{align*}
& \int_{-\infty}^{+\infty} \left( \lfrac{1}{t-z}-\lfrac{t}{1+t^2} \right)
  \,\nu(t)\,dt
  = \int_{-\infty}^{+\infty} \nu(t)\,
    d\left( \log(t-z) - \lfrac{1}{2}\log(1+t^2) \right) \\
&\qquad
= \int_{-\infty}^{+\infty} \nu(t)\,d\log\lfrac{t-z}{\sqrt{1+t^2}}
= - \int_{-\infty}^{+\infty} \log\lfrac{t-z}{\sqrt{1+t^2}}\,d\nu(t)
  + \lim_{T\to +\infty} \nu(t) \log\lfrac{t-z}{\sqrt{1+t^2}}
    \bigg|_{-T}^{T} \\
&\qquad
= - \int_{-\infty}^{+\infty} \log\lfrac{t-z}{\sqrt{1+t^2}}\,d\nu(t) \\
&\qquad\quad\ + \lim_{T\to +\infty} \left[ \nu(T)
    \left( -\lfrac{z}{T} + O \left( \lfrac{1}{T^2} \right) \right)
    - \nu(-T) \left( -i\pi + \lfrac{z}{T}
    + O\left( \lfrac{1}{T^2} \right) \right) \right] \\
&\qquad
= i\pi\nu(-\infty)
  + \int_{-\infty}^{+\infty} \log\lfrac{\sqrt{1+t^2}}{t-z}\,d\nu(t)\,,\quad
  z\in\H\,,
\end{align*}
which proves  \eqref{th2.3} and its equivalence to  \eqref{th2.1}
for arbitrary function $\nu$ in question.

\vspace{0.25cm}
\subsubsection{Proof of item 2.}

In view of
\begin{equation}\label{4.5}
- \log (t-z) = - \log \sqrt{y^{2} + (t-x)^{2}} + i \left(\lfrac{\pi}{2}
- \arctan\lfrac{t-x}{y}\right)\,,\quad z=x+iy\in\H\,,
\end{equation}
equations \eqref{th2.5} and \eqref{th2.6} follow easily from \eqref{th2.3}.

To calculate the boundary values of $V^{\Phi} (x,y)$ we fix an arbitrary
$x,\theta\in\R$ and let $y$ be only in the interval $(0,1)$.
Observe that
\begin{equation}\label{4.1}
\lim_{y\downarrow 0} \left(
\lfrac{\pi}{2} - \arctan \lfrac{t-x - \theta y }{y} \right)
= \begin{cases}
  \pi, & \hbox{$t<x$;} \\[0.2cm]
  \lfrac{\pi}{2} + \arctan \theta , & \hbox{$t=x$;} \\[0.2cm]
  0 , & \hbox{$t>x$.}
  \end{cases}
\end{equation}
From  $0 \leq \lfrac{\pi}{2} -\arctan \lfrac{t-x- \theta y}{y} \leq \pi $
and \cite[4.4.42, p.81]{ab}
\begin{equation*}
\lfrac{\pi}{2} - \arctan \lfrac{t-x- \theta y}{y}
=  \lfrac{y}{t-x- \theta y} + O (\lfrac{y^{3}}{|t-x|^{3}}) \quad
\text{as}\quad t \to + \infty \ ,
\end{equation*}
it follows that there exists a finite positive constant
$C_1 \geq 2 (|x| + |\theta|)$ such that
\begin{equation*}
  0 \leq \lfrac{\pi}{2} -\arctan \lfrac{t-x- \theta y}{y} \leq \pi \cdot
\chi_{(-\infty , C_1]} (t) + \lfrac{2 }{t} \cdot \chi_{( C_1 ,
 +\infty) } (t)  \ , \ \ t \in \R \ , \
y \in (0,1) \ .
\end{equation*}
According to \eqref{th2.4} this means that
$\lfrac{\pi}{2} -\arctan  \lfrac{t-x- \theta y}{y} \in L_{1}(\R, d \nu (t))$,
so that Lebesgue's dominated convergence theorem \cite[p.26]{rud}
guarantees the passage to the limit in \eqref{th2.6} for $y\downarrow0$.
With the help of \eqref{aux2} we then obtain
\begin{align*}
\lim_{y \downarrow 0}
& V^{\Phi} (x +\theta y,y) \\
&= \pi \nu (-\infty) + \pi \nu ((-\infty, x)) +
   \left(\lfrac{\pi}{2} + \arctan \theta \right) \nu (\{x\}) \\
&= \pi \nu (-\infty) + \pi \left( \nu (x - 0) - \nu (-\infty) \right) +
   \lfrac{\pi}{2} \left( \nu (x + 0) - \nu(x - 0) \right) +
   \nu (\{x\}) \arctan \theta \\
&= \pi \lfrac{\nu (x + 0) + \nu (x - 0)}{2} +
   \nu (\{x\}) \arctan \theta = \pi \nu (x) + \nu(\{x\})\arctan \theta\,.
\end{align*}
This proves \eqref{th2.8} for every $x \in \R$ and
$\mathcal{D} (V^{\Phi}) \subset \R \setminus D_{\nu}$.

Fix now an arbitrary $x \in \mathcal{D} (U^{\Phi})$ and calculate the limit
of $U^{\Phi} (x,y)$ as $y \downarrow 0$,
which according to the definition of $\mathcal{D} (U^{\Phi})$
exists, is finite, and is equal to $U^{\Phi} (x)$.
We observe that the function $\log\sqrt{\lfrac{1 + t^2}{y^2 + (t-x)^2}}$
increases monotonically to the limit function
$\log \lfrac{\sqrt{1 + t^2}}{|t-x|}$ as $y \downarrow 0$.
But the right-hand side integral in \eqref{th2.5} is bounded above for
all $y \in (0, 1)$ and so one can apply the  Beppo Levi theorem
\cite[p.305]{kol} to get
$\log\lfrac{\sqrt{1 + t^2}}{|t-x|} \in L_1 (\R, d \nu)$
and the validity of \eqref{th2.7} for all
$x \in \mathcal{D} (\Phi) \subset \mathcal{D} (U^{\Phi})$.

\vspace{0.45cm}
\subsection{Proof of Corollary~\ref{corr1}}$\phantom{a}$ \\


For every $z = x + i y $, $x , t \in \R$, $y > 0$ and arbitrary
$| \varepsilon | < y / 2$  we have
\begin{equation*}
\left| \frac{\varepsilon}{t - z }\right|
= \left| \frac{\varepsilon}{t - x - i y }\right| < \frac{1}{2}\,,
\end{equation*}
and due to \eqref{th2.4},

\vspace{-0.25cm}
\begin{equation*}
- \frac{1}{\varepsilon}\,\log \left(1-\frac{\varepsilon}{t-z}\right)
\ \longrightarrow\ \frac{1}{t- z}
\stackrel{\eqref{th2.4}}{\in}
L_1 \left(\R , d \nu (t)\right)
\quad\text{as $\varepsilon\to 0$}\,.
\end{equation*}

\vspace{0.25cm} \noindent
Since by \cite[p.68, 4.1.38, 4.1.33]{ab} and \eqref{th2.4}

\vspace{-0.25cm}
\begin{equation*}
\frac{1}{|\varepsilon|} \cdot \left| \log
\left(1- \frac{\varepsilon}{t - z}\right)\right|
\stackrel{(4.1.38)}{\leq}
\frac{1}{|\varepsilon|} \cdot \left| \log \left( 1+
\frac{\left| \frac{\varepsilon}{t-z} \right|}
     {1 - \left| \frac{\varepsilon}{t-z} \right|}
\right) \right|
\stackrel{(4.1.33)}{\leq}
\frac{2}{\left| t-z \right|}
\stackrel{\eqref{th2.4}}{\in}
L_1 \left(\R , d \nu (t)\right) \ ,
\end{equation*}
we can deduce from \eqref{th2.3} and  Lebesgue's dominated convergence
theorem \cite[p.26]{rud}  that

\vspace{-0.25cm}
\begin{align*}
\frac{ \Phi (z+ \varepsilon) - \Phi (z)}{\varepsilon} - \alpha
&= \frac{1}{\varepsilon} \left(
   \int_{-\infty}^{+\infty} \log \frac{\sqrt{1+t^2}}{t-z-\varepsilon}\;d\nu(t)
   -\int_{-\infty}^{+\infty} \log \frac{\sqrt{1+t^2}}{t-z}\;d\nu(t)\right) \\[0.2cm]
&= \int_{-\infty}^{+\infty} \frac{1}{\varepsilon}
   \left( \log \frac{\sqrt{1+t^2}}{t-z-\varepsilon}
   -\log \frac{\sqrt{1+t^2}}{t-z}\,\right)\;d\nu(t) \\[0.2cm]
&= -\int_{-\infty}^{+\infty} \frac{1}{\varepsilon}
   \log \left( 1-\frac{\varepsilon}{t-z} \right)\;d\nu(t) \\[0.2cm]
&\longrightarrow \int_{-\infty}^{+\infty} \frac{d\nu(t)}{t-z}
\quad\text{ as $\varepsilon\to 0$}\,,
\end{align*}

\vspace{0.15cm} \noindent
which proves the validity of  \eqref{th2.9}.

\vspace{0.15cm}
Assume that $\Phi\in\mathcal{P} \cap \primP$ is a constant function.
Then the definition of the class $\mathcal{P} \cap \primP$
allows $\Phi$ to be any constant of the form $a+ib$, $a\in\R$,
$b \geq 0 $.
Any of such constants can be represented by the formula \eqref{th2.3}
with $\alpha = 0$, $\beta = a$ and $\nu(x) \equiv \nu(-\infty) = b/\pi$.
Since the representation \eqref{th2.3} is unique, there is no 
other choice 
for $\alpha$, $\beta$ and $\nu$.

\vspace{0.15cm}
If $\nu$ is a constant function then $\supp \nu = \{\emptyset\}$ and
\eqref{th2.3} implies directly that
$\Phi(z) = \alpha z + \beta + i \pi \cdot \nu(-\infty)$ and
$\Phi^{\,\prime}(z) \equiv \alpha $, $\IM\Phi^{\,\prime}(z) \equiv 0$.

\vspace{0.15cm}
If $\Phi$ is not a linear function, then according to what was just proved,
$\nu$ in \eqref{th2.3} is a non-constant function.
Thus, $\supp \nu \neq \{\emptyset\}$ and for $z = x+iy \in \H$
\eqref{th2.6} implies that
$V(x,y) \geq \int_{\R}(\lfrac{\pi}{2} - \arctan\lfrac{t-x}{y})\,d\nu(t)>0$
because the integrand is strictly positive on the whole $\R$ and by virtue
of \eqref{th2.9},
$V_{x}(x,y) \geq \int_{\R} \lfrac{y}{y^2 + (x-t)^2}\ d\,\nu(t) > 0$
due to the same reason.
This completes the proof of Corollary~\sref{corr1}.

\vspace{0.5cm}
\subsection{Proofs of Theorem~\ref{cor1} and Corollary~\ref{cor2}}

\subsubsection{Proof of Theorem~\ref{cor1}.}\las{61}

Note first  that any function
$f\in \primP  \cap  \log \mathcal{P}$ belongs to the
intersectio of the classes $\primP  \cap   \mathcal{P}$ and
$\log \mathcal{P}$.
Therefore the results of Theorem~\ref{th2} and the representations
\eqref{2} and \eqref{3} are valid for $f$.
Uniqueness of the representation \eqref{2} for $f$ together with the
possibility of writing it in both forms \eqref{th2.1} and \eqref{3}
implies immediately the representation \eqref{cor1.2} with $\nu$
satisfying \eqref{cor1.1}.
All other statements of Theorem~\sref{cor1} except the last three ones
are direct consequences of the corresponding statements of
Theorem~\sref{th2}.
The last three statements of Theorem~\sref{cor1} follow from
\eqref{cor1.1}, Theorem~\sref{th2}, 4) and the additional remark that
a linear function belongs to $\log \mathcal{P}$ if and only if it is
the constant one, as it follows from the inequalities
$0 \leq \IM \varphi (z) \leq \pi $, $z \in \H$,
for arbitrary  $\varphi \in \log \mathcal{P}$.

\subsubsection{Proof of Corollary~\ref{cor2}.}\las{62}

If $f (z) = \int_{0}^{1} \log \frac{1}{1 - t z} \ d \mu (t)$,
with $\mu \in \mathcal{M}^{+}(\R)$ satisfying $\supp\mu \subseteq [0, 1]$
and $\mu( \R ) = 1 $,  then obviously $f \in \hol((-\infty,1))$,
$f (0) = 0$  and, in view of \eqref{5} and  \eqref{cor1.2},
$f \in \primP  \cap  \log \mathcal{P}$.

Conversely, $f \in \primP \cap  \log \mathcal{P} \subset \mathcal{P}$
and $f \in \hol((-\infty,1))$ imply by \eqref{2} and \eqref{a2}
(see also \cite[p.18]{D}), that $\IM f (x+i0) = 0 $
for any $x < 1$.
Due to \eqref{cor1.7} this means that the representations \eqref{cor1.2}
and \eqref{cor1.3} are valid  for the function $f$ with $\nu (x) = 0$ for
all $x < 1$.
Thus, using the change of variable theorem for the Lebesgue-Stieltjes
integral \cite[Th.6.2.1, p.97]{cart} we can rewrite \eqref{cor1.3} as
follows
\begin{equation}\label{5.1}
f (z) = \beta + \int_{[1, +\infty)} \log\frac{\sqrt{1+t^2}}{t-z}\;d\nu(t)
= \beta + \int_{(0, 1]} \log\frac{\sqrt{1+t^2}}{1-zt}\;
  d\left(1-\nu(1/t)\right)\,.
\end{equation}
Put now
\begin{equation}\label{5.3}
\mu (x) :=
\begin{cases}
 0\,,            & x<0\,, \\
 1 - \nu(1/x)\,, & x>0\,,
\end{cases}
\end{equation}
and $\mu (0) := 1/2 - \nu (+\infty)/2 $.
Then obviously $\mu \in M^{\uparrow } (\R)$ and if $\nu$ satisfies
\eqref{aux2}, the function $\mu$ satisfies the same.
It follows from $0 < \nu (\R) =  \nu (+\infty) \leq 1$ that
$\mu (0+0) = 1 - \nu (+\infty) \in [0,1) $.
For the Lebesgue-Stieltjes measure $\mu \in \mathcal{M}^{+} (\R)$ induced
by this function $\mu  \in M^{\uparrow } (\R)$ we have
$\mu (\R) = \mu (+\infty) - \mu (-\infty) = (1- \nu (0+0)) - 0 = 1$
and the value of
$\mu (\{0\}) = \mu (0+0) - \mu (0-0) = 1 - \nu (+\infty)\in [0, 1)$
can be nonzero.
But since the integrand $\log \frac{\sqrt{1 + t^2}}{1 - z t}$
in \eqref{5.1} vanishes at $t = 0$, we get
\begin{equation}\label{5.1a}
f(z) = \beta  + \int_{[0, 1]} \log \frac{\sqrt{1+t^2}}{1-zt}\;d\mu(t)\,.
\end{equation}
For any $z \in \H$ the absolute values of $\log\lfrac{\sqrt{1+t^2}}{1-zt}$
and $\log\sqrt{1+t^2}$ are bounded above in $t\in[0,1]$ and therefore
$f(0) = 0$ and \eqref{5.1a} yield
$\beta = - \int_{0}^{1} \log \sqrt{1 + t^2} \ d \mu (t)$
and
\begin{equation}\label{5.2}
f(z) = \int_{[0, 1]} \log \frac{1}{1 - z t}\;d\mu(t)\,,
\end{equation}
which finishes the proof of \eqref{cor2.1}.

Finally, if $f$ is identically constant, then it follows from $f(0)=0$
that $f \equiv 0$ and by Theorem~\ref{cor1} $\nu \equiv 0 $ which gives
by virtue of \eqref{5.3}
$\mu (x) 
= \chi_{\raisebox{-3pt}{{\footnotesize $(\,0,+\infty\,)$}}}(x) 
+ (1/2)\chi_{\raisebox{-3pt}{{\footnotesize $\{0\}$}}}(x)$, $x\in\R$, where
$\chi_{\raisebox{-3pt}{{\footnotesize $A$}}}(x)$ 
denotes the characteristic function of $A\subset\R$.
This obviously means that $\supp \mu = \{0\}$ and $\mu (\{0\})=1$
what was to be proved.


\vspace{0.4cm}
\subsection{Proof of Theorem~\ref{th3}} \mbox{}

\vspace{0.1cm}
\subsubsection{}
\
According to the definition of ${\Delta_{\primP}}$ made before \eqref{2.3},
we obviously have that $f \notin {\Delta_{\primP}}$ if and only if the
support of $\nu$ consists of at least two different points.

We assume now that $f \notin {\Delta_{\primP}}$, introduce  the function
\begin{equation*}
\varphi (x; t) := \frac{\sqrt{1+t^2}}{|t-x|}\,,\quad x,\,t\in\R\,.
\end{equation*}
and prove first the following crucial lemma.

\vspace{0.25cm}
\begin{lemma}\las{l6.1}
Let $\nu \in \ms$  with $\nu (\R) \in (0,1]$ and
\begin{equation*}
\alpha_{\nu} (x) :=
\exp\left(
\int_{-\infty}^{+\infty} \log\lfrac{\sqrt{1+t^2}}{|\,t-x\,|}\,d\nu(t)
\right)\,,\quad x\in\R\,.
\end{equation*}
Then for every $a\in{\rm{supp}}\,\nu$ with $\nu(\{a\})<1$ and every
$p\in\left(\,1,{1}/{\nu(\{a\})}\,\right)$ there exists
$\varepsilon = \varepsilon (a, p)> 0$ such that
\begin{equation}\label{6.8}
\int_{a-\varepsilon }^{a+\varepsilon }
\alpha_{\nu}(x)^p\,dx < + \infty\,.
\end{equation}
\end{lemma}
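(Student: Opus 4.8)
The plan is to isolate the singular behaviour of $\alpha_\nu(x)^p$ near $x=a$, which comes entirely from the values of $t$ close to $a$, and to control it through the generalized Hölder inequality of Lemma~\ref{lem32}. For $\Omega:=[a-\delta,a+\delta]$ and $x$ ranging over a smaller interval $Q:=[a-\varepsilon,a+\varepsilon]$ with $0<\varepsilon<\delta$, I split the exponent defining $\alpha_\nu$ as
\[
\int_{-\infty}^{+\infty}\log\varphi(x;t)\,d\nu(t)
=\int_{\Omega}\log\varphi(x;t)\,d\nu(t)+\int_{\R\setminus\Omega}\log\varphi(x;t)\,d\nu(t).
\]
First I would dispose of the far integral: for $t\notin\Omega$ and $x\in Q$ one has $|t-x|\ge\delta-\varepsilon$, and since $\varphi(x;t)\to1$ as $|t|\to+\infty$, the quantity $\log\varphi(x;t)$ is bounded above on $(\R\setminus\Omega)\times Q$ by a finite constant $C_0$ (treat $|t|\ge 2(|a|+\delta)$, where $|t-x|\ge|t|/2$, separately from the remaining bounded annulus). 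As $\nu(\R)\le1$, this gives $\alpha_\nu(x)^p\le e^{C_1}\exp\bigl(p\int_\Omega\log\varphi(x;t)\,d\nu(t)\bigr)$ with $C_1:=pC_0\nu(\R)$, reducing everything to the near part.

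The crucial point is the choice of $\delta$. Since $p<1/\nu(\{a\})$ we have $p\,\nu(\{a\})<1$, and because $\nu$ is finite with $\bigcap_{\delta>0}\Omega=\{a\}$, continuity from above lets me fix $\delta>0$ so small that $s:=p\,\nu(\Omega)<1$; as $a\in\supp\nu$ we simultaneously keep $\nu(\Omega)>0$, so $s\in(0,1)$. This is precisely the admissibility condition $0<\rho(\Omega)<1$ required below, and it is the only place where the hypotheses $a\in\supp\nu$ and $p<1/\nu(\{a\})$ are used.

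Then I would invoke Lemma~\ref{lem32} with $Q=[a-\varepsilon,a+\varepsilon]$, $\omega$ the Lebesgue measure on $Q$ (so $\omega(Q)=2\varepsilon\in(0,\infty)$), $\Omega=[a-\delta,a+\delta]$, $\rho=p\,\nu|_{\Omega}$ (so $\rho(\Omega)=s$), and $\Psi(x,t)=\varphi(x;t)$, which makes $\int_\Omega\log\Psi\,d\rho=p\int_\Omega\log\varphi\,d\nu$. Condition (a) holds because $\Psi(x,t)\ge(2\delta)^{-1}>0$ on $Q\times\Omega$; condition (b), namely $D_\Psi=\sup_{t\in\Omega}\int_Q\varphi(x;t)^{s}\,dx<\infty$, holds because $\sqrt{1+t^2}$ is bounded on $\Omega$ and, since $s<1$, the integral $\int_Q|t-x|^{-s}\,dx$ is finite uniformly in $t$. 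Lemma~\ref{lem32} then yields $\int_Q\exp\bigl(p\int_\Omega\log\varphi\,d\nu\bigr)\,dx\le D_\Psi$, and combining with the far-part estimate gives $\int_{a-\varepsilon}^{a+\varepsilon}\alpha_\nu(x)^p\,dx\le e^{C_1}D_\Psi<+\infty$, which is \eqref{6.8}.

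The main obstacle is arranging the hypotheses of Lemma~\ref{lem32}: one must produce a neighbourhood $\Omega$ on which the weighted mass $s=p\,\nu(\Omega)$ is strictly below $1$ while still keeping $\nu(\Omega)>0$. This is exactly the trade-off encoded in $1<p<1/\nu(\{a\})$, and it is what makes the uniform estimate $D_\Psi<\infty$ (local integrability of $|t-x|^{-s}$ with $s<1$) available; had we tried to apply the lemma over all of $\R$, the total mass $p\,\nu(\R)$ could reach or exceed $1$ and the corresponding singular integral would diverge. The far-field bound and the verification of condition (a) are routine once this splitting is in place.
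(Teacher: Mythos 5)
Your proof is correct and follows essentially the same route as the paper: a near/far splitting of the exponent around $a$, a choice of neighbourhood $\Omega$ small enough (via $\nu(\Omega)\downarrow\nu(\{a\})$ and $p\,\nu(\{a\})<1$) to make the generalized H\"older inequality of Lemma~\ref{lem32} applicable, and a trivial uniform bound on the far part. The only cosmetic difference is that you absorb $p$ into the measure ($\rho=p\,\nu|_{\Omega}$, $\Psi=\varphi$) whereas the paper absorbs it into the integrand ($\rho=\nu$, $\Psi=\varphi^{p}$); both make the single condition $p\,\nu(\Omega)<1$ do all the work.
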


\begin{proof}\quad
If $a \notin \supp \nu$ then $\nu (\{a\}) = 0 $ and there exists
$\varepsilon>0$ such that $\nu([a-2\varepsilon,a+2\varepsilon])=0$.
Since $\alpha_{\nu} (x)$ is uniformly bounded above on
$[a-\varepsilon , a+\varepsilon ]$ we get validity of \eqref{6.8}
for arbitrary $ p> 0$ as was to be proved.

Assume that $a \in \supp \nu$.  Denote $\theta := \nu (\{a\}) \in [0, 1)$
and fix $p > 1$ with $p \cdot \theta < 1$.
Since
\begin{equation*}
\lim_{\varepsilon\downarrow 0} \nu\big((a-\varepsilon,\,a+\varepsilon)\big)
= \nu (\{a\}) = \theta\,,
\end{equation*}
there exists $\varepsilon_1 \in \left(0, \frac{1}{3}\right)$ such that
\begin{equation*}
\theta_1 := \nu\big((a-3\varepsilon_{1},a+3\varepsilon_{1})\big)<1\,,
\qquad
p \cdot \nu \big( ( a- 3\varepsilon_{1}, a + 3\varepsilon_{1})\big)
= p\cdot \theta_{1} < 1\,.
\end{equation*}
Put now in Lemma~\sref{lem32},
$Q = [a- \varepsilon_{1}, a + \varepsilon_{1}]$,
$\Omega = [a- 2\varepsilon_{1}, a + 2\varepsilon_{1}]$,
$\rho = \nu $,
$\omega (x) = m (x)$,
$\Psi (x, t) = \varphi (x ; t)^{p}$.
Then the condition \eqref{lem32.1}  is satisfied because
$\omega (Q) = 2 \varepsilon_{1} \in (0, +\infty)$ and
$\rho \left( \Omega\right)
= \nu \left( \Omega\right)
= \nu \left( [a- 2\varepsilon_{1}, a + 2\varepsilon_{1}]\right)
\in \left(0, \nu\big(( a-3\varepsilon_1, a+3\varepsilon_1) \big)\,\right)
= (0,\theta_1) \subset (0,1)$.
When $x \in Q = [a- \varepsilon_{1}, a + \varepsilon_{1}] $ and
$t \in \Omega = [a- 2\varepsilon_{1}, a + 2\varepsilon_{1}]$
we have $|x-t| \leq 3 \varepsilon_{1} < 1$  and therefore
$\Psi (x , t ) \geq 1$ which gives the validity of \eqref{lem32.2}(a)
and the inequality
$\Psi(x,t)^{\nu\left( [a-2\varepsilon_{1},a+2\varepsilon_{1}]\right)}
\leq \Psi(x,t)^{\theta_{1}}$.
Thus, the integral in \eqref{lem32.2}(b) can be estimated as
\begin{equation*}
\int\limits_{Q} \Psi(x,t)^{\rho (\Omega)}\,dx
\leq \int\limits_{a-\varepsilon_{1}}^{a+\varepsilon_{1}} \Psi(x,t)^{\theta_{1}}\,dx
= \int\limits_{a-\varepsilon_{1}}^{a + \varepsilon_{1}}
  \left(\frac{\sqrt{1+t^2}}{|\,t-x\,|}\right)^{p\cdot\theta_{1}} dx
= \left(1 + t^2\right)^{p \cdot  \theta_{1}/2}
  \int\limits_{a- \varepsilon_{1}}^{a + \varepsilon_{1}}
  \frac{1}{|\, t - x \, |^{p \cdot  \theta_{1}}} d x \ ,
\end{equation*}\noindent
where
\begin{equation*}
\int_{a- \varepsilon_{1}}^{a + \varepsilon_{1}}
\frac{1}{|\, t - x \, |^{p \cdot  \theta_{1}}} d x \leq \frac{1 }{1- p \cdot
\theta_{1}} \cdot
\max_{x \in Q \ , \ t \in \Omega }  |\, t - x \, |^{1 - p \cdot  \theta_{1}} =
\frac{(3 \varepsilon_{1})^{1 - p \cdot  \theta_{1}} }{1- p \cdot  \theta_{1}} \ ,
\end{equation*}
and we can set in \eqref{lem32.2}(b)
\begin{equation*}
D_{\Psi} =  \left(1 + (|a| + 2\varepsilon_{1})^2\right)^{p \cdot  \theta_{1}/2}
\cdot \frac{(3 \varepsilon_{1})^{1 - p \cdot  \theta_{1}} }{1- p \cdot  \theta_{1}}
< \infty \ .
\end{equation*}
Applying the result of Lemma~\sref{lem32} we get
\begin{equation}\label{6.10}
\int_{a-\varepsilon_{1}}^{a+\varepsilon_{1}}
\exp\left(
p \cdot \int_{a-2\varepsilon_{1}}^{a+2\varepsilon_{1}}
\log \lfrac{ \sqrt{1+t^2} }{ |\,t-x\,| }\, d\nu(t) \right)\;dx
\leq D_{\Psi}\;.
\end{equation}

For $x \in Q = [a- \varepsilon_{1}, a + \varepsilon_{1}]$ and
$|t-a| \geq 2\varepsilon_{1}$
we obviously have,
$\max\limits_{|t-a| \geq 2\varepsilon_{1}} \varphi (x ; t)
\leq \varepsilon_{1}^{-1} {\sqrt{1 + (|a|+2\varepsilon_{1})^2}}$
from which it follows that
\begin{equation}\label{6.11}
\int_{\R\setminus \Omega} \log \Psi (x , t ) d \nu (t)
\leq  p \cdot \nu (\R) \cdot
\log \frac{\sqrt{1 + (|a|+2\varepsilon_{1})^2}}{\varepsilon_{1}}\;.
\end{equation}
Therefore \eqref{6.10} and \eqref{6.11} yield
\begin{align*}
\int_{a-\varepsilon_{1}}^{a+\varepsilon_{1}}
\alpha_{\nu}(x)^p\,dx
&= \int_{a-\varepsilon_{1}}^{a+ \varepsilon_{1}}
\exp\left( \int_{\R} \log\Psi(x,t)\,d\nu(t) \right)\;d x  \\
&= \int_{a-\varepsilon_{1}}^{a+\varepsilon_{1}}
\exp\left(
\int_{\Omega} \log \Psi(x,t)\,d\nu(t) \right)
\,\cdot\,
\exp\left(
\int_{\R\setminus \Omega} \log \Psi(x,t)\,d\nu(t) \right)\;dx  \\
& \leq
\left( \lfrac{\sqrt{1+(|a|+2\varepsilon_{1})^2}}{\varepsilon_{1}}
\right)^{\rho \cdot p}
\cdot \int_{a-\varepsilon_{1}}^{a+\varepsilon_{1}}
\exp\left( \int_{\Omega} \log \Psi(x,t)\,d\nu(t) \right) \;dx \\
& \leq
\left( \lfrac{\sqrt{1+(|a|+2\varepsilon_{1})^2}}{\varepsilon_{1}}
\right)^{\rho\cdot p}
\cdot \left(1+(|a|+2\varepsilon_{1})^2\right)^{p\cdot\theta_{1}/2}
\cdot \frac{(3\,\varepsilon_{1})^{1-p\cdot\theta_{1}}}{1-p\cdot\theta_{1}} \\
& < +\infty\;,
\end{align*}
what was to be proved.
\end{proof}


\vspace{0.1cm}
\subsubsection{} \
Now we take an arbitrary $T > 1$ and, for convenience,  designate
\begin{equation*}
J(\nu) := \left\{ x\in\R \mid
\nu \left(\left\{x\right\}\right)> 0\;\right\}\,, \quad
\vartheta := \nu \left(\R\right) \in (0,1]\,.
\end{equation*}
The set $J(\nu)$ is at most countable and
\begin{equation*}
\sum_{x \in J(\nu)} \nu \left(\left\{x\right\}\right)
\leq \nu \left(\R\right)
= \vartheta \in (0,1]\,, \qquad
\sum_{x \in {{J}}(\nu)} \nu \left(\left\{x\right\}\right)
+ \nu \left(\R\setminus J(\nu) \right)
= \vartheta\,.
\end{equation*}
Clearly, $J(\nu) \subset {\rm{supp}}\,\nu$,
$\nu\left( {\rm{supp}}\,\nu \right) = \nu \left(\R\right)$
and hence
\begin{equation} \label{6.12}
\sum_{x \in J(\nu)} \nu \left(\left\{x\right\}\right) +
\nu \left( (\R\setminus J(\nu)) \cap {\rm{supp}}\,\nu \right)
= \vartheta\,.
\end{equation}
Since \eqref{2.3} gives a complete description of all functions
in the class $\primP  \cap  \log \mathcal{P}$  whose corresponding
measure $\nu$ by \eqref{cor1.3} has a support composed
of no more than one point, then it follows from
$f \in \left( \primP \cap \log \mathcal{P} \right) \setminus \Delta_{\primP}$
that ${\rm{supp}}\,\nu$ consists of at least two distinct points.
Thus, \eqref{6.12} yields that the sum in the left-hand side of
\eqref{6.12} contains at least two nonzero summand each of which is
strictly less than $\vartheta \leq 1$ and therefore is strictly less
than $1$.
So that
\begin{equation*}
J_{\nu}(\R) < \vartheta \leq 1\,,
\end{equation*}
and for each point of the compact set $[-T,T]$ and for any $p > 1$ with
$p \cdot J_{\nu} ([-T,T]) \leq p \cdot J_{\nu}(\R) < 1$ it is possible
to apply Lemma~\sref{l6.1} to get the following open cover of $[-T,T]$:
\begin{equation*}
[-T,T] \subset
\bigcup_{x\in[-T,T]}\big(x-\varepsilon(x,p),\,x +\varepsilon(x,p)\big)\,.
\end{equation*}
Choosing a finite subcover
\begin{equation*}
[-T,T] \subset \bigcup\limits_{k =1}^{N_T}
\big(\,x_k^{T}-\varepsilon(x_k^{T},p),\,x_k^{T}+\varepsilon(x_k^{T},p)\big)\,,
\quad x_k^{T} \in [-T,T]\,,\quad k=\overline{1,N_T}\,,\quad N_T\in\N\,,
\end{equation*}
we get
\begin{equation*}
\int\limits_{x_k^{T} - \varepsilon (x_k^{T}, p) }^{x_k^{T} + \varepsilon (x_k^{T}, p)}
\alpha_{\nu}(x)^p\,dx < +\infty\,, \quad k=\overline{1,N_T}\,,
\end{equation*}
and thus,
\begin{equation}\label{6.14}
\int\limits_{-T}^{T} \alpha_{\nu}(x)^p\,dx < +\infty\,, \quad
1 < p < \frac{1}{J_{\nu}(\R)}\,, \quad T>0\,.
\end{equation}

\vspace{0.1cm}
\subsubsection{} \
The properties \eqref{a1} imply that \eqref{th3.2} follows from the
validity of the right-hand side inequality in  \eqref{th3.2}.
Therefore to finish the proof of Theorem~\ref{th3} it  suffices to prove
(see \eqref{cor1.6}) that  both of the following inequalities hold:
\begin{equation} \label{6.15}
\int_{-\infty}^{0}
\exp\left(
p \cdot \int_{-\infty}^{+\infty} \log \varphi(x;t)\,d\nu(t) \right)
\;dx\,,\quad
\int_{0}^{+\infty}
\exp\left(
p \cdot \int_{-\infty}^{+\infty} \log \varphi(x;t)\,d\nu(t) \right)
\;dx < +\infty\,.
\end{equation}
We introduce now the function
\begin{equation*}
\widehat{\nu} (t)
:= \nu(+\infty) + \nu(-\infty) - \nu(-t)\,,\quad t\in\R\,.
\end{equation*}
It is evident that
$\widehat{\nu}\in M^{\uparrow }(\R)$,
$\widehat{\nu}(-\infty) = {\nu}(-\infty)$,
$\widehat{\nu}(+\infty) = {\nu}(+\infty)$,
and for any $b > 0$,
$\widehat{\nu}(b-0)  = {\nu}(+\infty) + {\nu}(-\infty) - {\nu}(-b+0)$,
$\widehat{\nu}(-b+0) = {\nu}(+\infty) + {\nu}(-\infty) - {\nu}( b-0)$.
Thus, $\supp \widehat{\nu} = - \supp \nu$
and
\begin{equation} \label{6.18}
\widehat{\nu}(\R) = \nu(\R)\,,\quad
J_{\widehat{\nu}}(\R) = J_{\nu}(\R)\,,\quad
\widehat{\nu}\big( (-b,b)\big) = \nu \big( (-b, b) \big)\,,\quad
b > 0\,.
\end{equation}
It follows from $\varphi (- x ; t) \equiv  \varphi (x ; -  t)$, $x, t \in \R$,
that for arbitrary $0< b \leq +\infty$ we have
\begin{align*}
\int\limits_{-b}^{0}
\exp\left(
p \cdot \int_{-\infty}^{+\infty} \log\varphi(x;t)\,d\nu(t) \right)\;dx
&= \int\limits_{0}^{b}
\exp\left(
p \cdot \int_{-\infty}^{+\infty} \log\varphi(-x;t)\,d\nu(t) \right)\;dx \\
&= \int\limits_{0}^{b}
\exp\left(
p \cdot \int_{-\infty}^{+\infty} \log\varphi(x;-t)\,d\nu(t) \right)\;dx \\
&= \int\limits_{0}^{b}
\exp\left(
p \cdot \int_{-\infty}^{+\infty} \log\varphi(x;t)\,d\widehat{\nu}(t)
\right)\;dx\,,
\end{align*}
and so, \eqref{6.14} and \eqref{6.15} can be rewritten as follows,
\begin{equation}\label{6.24}
\int\limits_{0}^{T}
\left(\alpha_{\nu} (x)^p + \alpha_{\widehat{\nu}}(x)^p\right)\,dx < +\infty\,,
\quad 1 < p < \dfrac{1}{J_{\nu}(\R)}\,, \quad T > 0\,;
\end{equation}
\begin{equation}\label{6.19}
\int_{0}^{+\infty}
\exp\left(
p \cdot \int_{-\infty}^{+\infty} \log\varphi(x;t)\,d\sigma(t) \right)\,dx
< +\infty\,, \quad \sigma \in \left\{\nu,\widehat{\nu}\right\}\,,
\end{equation}
where
\begin{equation*}
\alpha_{\sigma} (x) :=
\exp\left( \int_{-\infty}^{+\infty} \log\varphi(x;t)\,d\sigma(t) \right)\,
\quad x \geq 0\,, \quad \sigma \in \left\{\nu ,\widehat{\nu} \right\}\,.
\end{equation*}
Fix an arbitrary $p\in(\,1/\nu(\R),\,J_{\nu}(\R)\,)$.
In view of \eqref{6.18},
$p \cdot \nu \big( \R\big) \equiv p \cdot \widehat{\nu}\big(\R\big)> 1$
and since $ \nu(\R) = \lim_{y\to +\infty} \nu\big( (-y,y) \big)$,
it is possible to find $A\geq1$ such that
$p\cdot\nu\big((-A,A)\big)\equiv p\cdot\widehat{\nu}\big((-A,A)\big >1$
and $\nu \big((-A,A)\big)\equiv\widehat{\nu}\big((-A, A)\big)
\geq \nu \big( \R\big) - {1}/({2\cdot p}) \equiv
\widehat{\nu} \big( \R\big) - {1}/({2\cdot p})$, i.e.,
\begin{equation}\label{6.25}
A \geq 1\,,\quad
p \cdot \sigma \big( (-A,A) \big) > 1\,, \quad
\sigma \big( (-A,A) \big) \geq \sigma\big(\R\big) - \frac{1}{2\cdot p}\,,
\quad \sigma \in \left\{\nu,\widehat{\nu} \right\}\,.
\end{equation}

Applying \eqref{6.24} with $T = 4 A$ we obtain
\begin{equation*}
\int\limits_{0}^{4A} \alpha_{\sigma}(x)^{p}\,dx < +\infty\,, \quad
\sigma \in \left\{\nu , \widehat{\nu} \right\}\,,
\end{equation*}
and to establish the validity of  \eqref{6.19} it remains to prove that
\begin{equation}\label{6.21}
I_{A, \sigma} := \int\limits_{4 A}^{+\infty}
\exp\left(
p \cdot \int_{-\infty}^{+\infty} \log\varphi(x;t)\,d\sigma(t) \right)\,dx
< +\infty\,,\quad \sigma \in \left\{\nu, \widehat{\nu}\right\}\,.
\end{equation}

\vspace{0.1cm}
\subsubsection{} \
For $\sigma \in \left\{\nu , \widehat{\nu} \right\}$ and $x \geq 4 A$
we have
\begin{equation*}
\max_{|t| \leq A} \varphi (x ; t) = \dfrac{\sqrt{1+A^2}}{|x - A|} \leq 1\,,
\end{equation*}
from which we get
\begin{equation*}
\int_{(-A,A)} \log\varphi(x;t)\,d\sigma(t)
\leq \int_{(-A,A)} \log\dfrac{\sqrt{1+A^2}}{|x-A|}\,d\sigma(t)
= \sigma \big((-A , A)\big) \log \dfrac{\sqrt{1+A^2}}{|x-A|}\,,
\end{equation*}
and
\begin{align}
I_{A, \sigma}
& = \int\limits_{4 A}^{+\infty}
\exp\left(
p \cdot \int_{(-A,A)} \log\varphi(x;t)\,d\sigma(t) +
p \cdot \int_{\R\setminus (-A,A)} \log\varphi(x;t)\,d\sigma(t) \right)\,dx
\label{6.22} \\
&\leq \int\limits_{4 A}^{+\infty}
\left(\dfrac{\sqrt{1+A^2}}{|x-A|}\right)^{p\cdot\sigma\big((-A,A)\big)}
\exp\left(
p \cdot \int_{\R\setminus (-A,A)} \log\varphi(x;t)\,d\sigma(t) \right)\,dx
\nonumber \\
& = \sum_{n\geq 0} I_{A,\sigma}\left(4A \cdot 2^n\right)\,, \nonumber
\end{align}
where
\begin{equation*}
I_{A, \sigma} (a)
:=  \int\limits_{ a}^{2 a}
\left(\dfrac{\sqrt{1+A^2}}{|x-A|}\right)^{p\cdot\sigma((-A,A))}
\exp\left(
p \cdot \int_{\R\setminus (-A,A)}\log\varphi(x;t)\,d\sigma(t) \right)\,dx\,,
\end{equation*}
and  $a \in \left\{4 A \cdot 2^n\right\}_{n \geq 0 }\ $.

It can be easily verified that
$ a \geq 4 A \geq 4$ implies $ \varphi (x ; t) \leq \sqrt{5}$
for all  $x \in [a, 2a]$, $t \notin [\frac{a}{2}, 4 a ]$ and
\begin{equation*}
\dfrac{\sqrt{1+A^2}}{|x - A|} \leq  \dfrac{\sqrt{1+A^2}}{|a - A|}
\leq \dfrac{2 A }{a}\,,\quad x \in [a, 2a]\,.
\end{equation*}
Using the inequalities \eqref{6.25}, we get
\begin{align*}
p \cdot \int_{\R\setminus (-A,A)} \log \varphi (x ; t)\,d\sigma (t)
& \leq p \cdot \sigma \big( \R \setminus (-A, A) \big) \cdot \log \sqrt{5}
  + p \cdot \int_{\left[\tfrac{a}{2}\, , \ 4 a\right]}
  \log \varphi(x;t)\,d\sigma(t) \\
& \leq \frac{1}{4} \log 5 +  p \cdot \int_{\left[\tfrac{a}{2}\, , \ 4 a\right]}
  \log \varphi(x;t)\,d\sigma(t)\,,
\end{align*}
and so,
\begin{equation}\label{6.23}
I_{A, \sigma} (a) \leq 2 \left(\dfrac{2A}{a}\right)^{p\cdot\sigma\big((-A,A)\big)}
\cdot I_{\sigma} (a)\,, \quad
I_{\sigma} (a) := \int\limits_{ a}^{2 a}
\exp\left(
p \cdot \int_{\left[\tfrac{a}{2}\,,\,4a\right]} \log\varphi(x;t)\,d\sigma (t)\right)
\,dx\,.
\end{equation}
To estimate $I_{\sigma} (a)$ we put  in Lemma~\sref{lem32}, $Q = [a, 2a ]$,
$\Omega = [a/2, 4a ]$, $\rho =  \sigma $,
$\omega (x) = m (x)$, $\Psi (x, t) = \varphi (x ; t)^{p}$.
Observe that  \eqref{6.25} and ${a} \geq 4 A \geq 4$ yield
$\sigma \big([\tfrac{a}{2}, 4 a]\big)
 \leq \sigma \big([ 2 A, +\infty )\big)
 \leq \sigma \big([  A, +\infty )\big)
 \leq \sigma (\R \setminus (-A, A))
 = \sigma (\R) - \sigma ( (-A, A))
 \leq 1/(2\cdot p)$.
If $\sigma \big([\tfrac{a}{2}, 4 a]\big) = 0$, then obviously
\begin{equation}\label{6.26}
I_{\sigma} (a) \leq a\,.
\end{equation}
Otherwise,  the condition \eqref{lem32.1}  is satisfied because
$\omega (Q) = a \in [4, +\infty)$ and
$\rho \left( \Omega\right)
 = \sigma \left( \Omega\right)
 = \sigma \left( [a/2, 4a ]\right) \in  (0, 1/2p]
 \subset (0,1)$.
When $ x \in Q = [a, 2a ]$ and $t \in \Omega = [a/2, 4a ]$ we have
$|x-t| \leq 3 a$  and therefore
$\varphi (x ; t) \geq ({3a})^{-1} {\sqrt{1+ a^{2}/4}}\geq 1/6$
which gives \eqref{lem32.2}(a) while the integral in \eqref{lem32.2}(b)
can be estimated as
\begin{align*}
\int_{Q} \Psi (x , t )^{\rho (\Omega)} d x
&= \int\limits_{ a}^{2 a} \
   \left(\frac{\sqrt{1+t^2}}{|t - x|}\right)^{{\fo{p\,\sigma\big([{a}/{2}\,,\,4 a]\big)}}}
   \,dx  \\
&\leq \left(\sqrt{1+16 a^2}\right)^{{\fo{p\,\sigma\big([{a}/{2}\,,\,4 a]\big)}}}
   \cdot \int\limits_{ a}^{2 a}
   \dfrac{1}{|t-x|^{{\fo{p\,\sigma\big([{a}/{2}\,,\,4 a]\big)}}}}\;dx \\
& \leq \left( 5a \right)^{{\fo{p\,\sigma\big([{a}/{2}\,,\,4 a]\big)}}}
   \cdot 2 \cdot
   \dfrac{(3a)^{{\fo{1 - p\,\sigma\big([{a}/{2}\,,\,4a]\big)}}}}
         {1-p\sigma \big([{a}/{2}\,,\,4 a]\big)} \\
& \leq \dfrac{10 a}{ 1 - p \sigma \big([{a}/{2}\,,\,4 a]\big)}
  \leq 20 a\,.
\end{align*}
Setting in \eqref{lem32.2}(b) $D_{\Psi} = 20 a$ we can apply the result
of Lemma~\sref{lem32} to get, in view of \eqref{6.26}, $ I_{\sigma} (a)
\leq 20 a$ and then by \eqref{6.25}  and \eqref{6.23}
\begin{equation*}
I_{A, \sigma} (a)
\leq 40 a \left(\dfrac{2 A }{a} \right)^{{\fo{p \cdot \sigma \big((-A , A)\big)}}}
\leq \dfrac{40\cdot (2 A)^p}{a^{{\fo{p \cdot \sigma \big((-A , A)\big) - 1}}}}
\leq \dfrac{40\cdot (2 A)^p}{\sqrt{a}}\,.
\end{equation*}
Now it follows from \eqref{6.22} that
\begin{equation*}
I_{A, \sigma}
\leq \sum_{n \geq 0 } I_{A, \sigma} \left(4A\cdot 2^n\right)
\leq 20 \cdot (2 A)^p \cdot \sum_{n \geq 0} \frac{1}{ 2^{\hspace{0.05cm}{\fo{n/2}}}}
\leq 80 \cdot (2 A)^p < +\infty\,,
\end{equation*}
which gives  \eqref{6.21} and finishes the proof of \eqref{th3.2}.

To prove \eqref{th3.3} assume that
$\exp({- p \cdot U^{f}(x)}) \in L_{1} (\R, d x/(1+x^2))$ for some
${1}/{\nu (\R)} < p < {1}/{J}_{\,\nu} (\R)$.
By setting   $y = \exp (- p \cdot U^{f}(x))$ in \eqref{3.1.1} we get
from it and \eqref{th3.2} that for arbitrary $\Lambda > 0$
\begin{align*}
4 \log (1 + \Lambda)
& = \int_{-\Lambda}^{\Lambda} \frac{2}{1+|x|} \, d x \\
& \leq \int_{-\Lambda}^{\Lambda} e^{\hspace{0.05cm}{\fo{p \cdot U^{f}(x)}}}\,dx +
       \int_{-\Lambda}^{\Lambda} \frac{e^{\hspace{0.05cm}{\fo{-p\cdot U^{f}(x)}}}}{(1+|x|)^{2}}\,d x \\
& \leq \int_{\R} e^{\hspace{0.05cm} {\fo{p \cdot U^{f}(x)}}}\,d x +
       \int_{-\Lambda}^{\Lambda} \frac{e^{\hspace{0.05cm}{\fo{- p\cdot U^{f}(x)}}}}{(1+|x|)^{2}}\,dx\,,
\end{align*}
which leads to a contradiction as $\Lambda \to +\infty$.
Thus,
$\exp({- p \cdot U^{f}(x)}) \notin L_{1} (\R, d x/(1+x^2))
\supset L_{q} (\R, d x/(1+x^2))$
for all $q \geq 1$ (see \cite[p.110]{bar}).
Together with inequalities \eqref{a1}(b) this proves \eqref{th3.3}
and completes the proof of Theorem~\sref{th3}.

\vspace{0.35cm}
\subsection{Proof of Theorem~\ref{pr1}}$\phantom{a}$ \\

\vspace{-0.2cm}
Using  Theorem~\ref{cor1} for $f:= \log \varphi$ we get
(see \cite[p.27]{D}) that $f$  is a non-constant function in the class
$\log \mathcal{P}$, $\varphi = e^{f}$ and
\begin{equation}\label{7.1}\hspace{-0.4cm}
\left|\varphi(x,y)\right|\!=\! e^{\hspace{0.05cm}{\fo{U^{f}(x,y)}}}\,,\
U^{\varphi}(x, y)\! =\! e^{\hspace{0.05cm}{\fo{U^{f}(x, y)}}} \cos V^{f}(x,y)\,,\
V^{\varphi}(x, y)\! =\! e^{\hspace{0.05cm}{\fo{U^{f}(x, y)}}} \sin V^{f}(x,y)\,.\hspace{-0.3cm}
\end{equation}
Furthermore, since $\varphi \in \mathcal{P}$ is a non-constant function
(see \cite[p.18]{D}),
\begin{equation}\label{7.2}
\arg \varphi (x + i y) = V^{f} (x,y) \in (0, \pi)\,.
\end{equation}
Now it follows directly from \eqref{a1} that each of the properties
(1) and (2) in Theorem~\ref{pr1}  is equivalent for the function $f$
to be in the class  $  {\mathcal{P}}_{\tn{\int} }  \cap  \log {\mathcal{P}}  $.

Since
\begin{align*}
\begin{vmatrix}
U^{\varphi}(x_1,y) & U^{\varphi}(x_2,y) \\
V^{\varphi}(x_1,y) & V^{\varphi}(x_2,y)
\end{vmatrix}
&= e^{{\fo{U^{\!f}(x_1,y)+U^{\!f}(x_2,y)}}}\cos V^{f}(x_1,y)\sin V^{f}(x_2,y )  \\
&\qquad -e^{{\fo{U^{\!f}(x_2,y)+U^{\!f}(x_1,y)}}}\cos V^{f}(x_2,y)\sin V^{f}(x_1,y) \\
&= e^{{\fo{U^{\!f}(x_1,y)+U^{\!f}(x_2,y)}}} \sin\big(V^{f}(x_2,y)-V^{f}(x_1,y)\big)
\end{align*}
for $y > 0$ and $-\infty < x_1  < x_2 +\infty$, it follows from \eqref{7.2}
that \eqref{a1}(a) is equivalent to the condition (3) of Theorem~\ref{pr1}.

Similarly, \eqref{7.2} and identity
$$
\frac{\partial }{\partial y} e^{{\fo{2 U^{\!f}(x, y)}}}
= \frac{\partial }{\partial y}
  \left( U^{\varphi}(x,y)^{2} + V^{\varphi}(x,y)^{2} \right)
= 2 \left(U^{\varphi}(x,y) \cdot U^{\varphi}_{y}(x,y)
  + V^{\varphi}(x,y) \cdot V^{\varphi}_{y}(x,y)\right)
$$
imply the equivalence of \eqref{a1}(b) and the condition (4) of
Theorem~\ref{pr1}. The proof is complete.


\vspace{0.25cm}
\subsection{Proofs of Theorem~\ref{cor3} and  Corollary~\ref{cor4}}

\subsubsection{Proof of Theorem~\ref{cor3}.}\las{91}

Observe that the first item of Theorem~\ref{cor3} follows directly from
Theorem~\sref{cor1} and \eqref{9}.
If $\varphi \in \mathcal{P}_{\log} \setminus \exp{\Delta_{\primP}}$
then $\varphi = e^{f}$, where
$f\in\left(\primP\cap\log\mathcal{P}\right) \setminus \Delta_{\primP}$.
Using the notation of Theorems~\ref{cor1} and~\ref{pr1} we can apply  the
results of Theorem~\ref{th3} to the function $f$ in order to get with the
help of the relations \eqref{7.1} and \eqref{7.2} the validity of all
statements of Theorem~\ref{cor3} except \eqref{cor3.8}.
Equality   \eqref{cor3.8} holds true due to the well-known consequence of
M.Riesz's theorem \cite[p.128]{ko}.

\subsubsection{Proof of Corollary~\ref{cor4}.}\las{93}

If $\varphi (z) = e^{f}$, where
$ f = \int_{0}^{1} \log \frac{1}{1-tz}\,d\mu(t)$, then by
Corollary~\ref{cor2} $f (0) = 0$, $f \in \hol((-\infty,1))$ and
$f \in \primP  \cap  \log \mathcal{P} $, which imply the validity of
all statements of Corollary~\ref{cor4}.

Conversely, assume that $\varphi(0)=1$, $\varphi\in\hol((-\infty,1))$
and  $\varphi\in \mathcal{P}_{\log}$.
Then there exists $ f \in \primP  \cap  \log \mathcal{P} $ such that
$\varphi (z) = e^{f}$ and $f (0) = 0$.
Since $\varphi \in  \mathcal{P}$ and $\varphi \in \hol((-\infty,1))$
the property \eqref{a2} and representation \eqref{2} imply that
$\IM \varphi (x+i0) = 0 $ for any $x < 1$.
Then it follows from \eqref{cor3.7} that the function $\nu$
corresponding to $\varphi $ by the exponential representation
\eqref{cor3.1} satisfies $\sin \pi \nu (x) = 0$ for almost all
$x < 1$.
Since by Theorem~\ref{cor3} $\nu$ is a nondecreasing
function on $\R$ with values in $[0,1]$ then  for almost all $x<1$
either $\nu (x) = 0$ or $\nu (x) = 1$ or $\nu (x) = \chi_{[a, +\infty)}(x)$
for some $a < 1$.

Assume that $\nu (x) = 1$ for almost all $x<1$.
Then $\nu (x) \equiv 1$ on $\R$ and in view of \eqref{cor3.1}, $\varphi(z)$
is a constant function and so $f$ is constant as well.
By Corollary~\sref{cor2} $f \equiv 0$ and $f$ can be represented in
the form \eqref{cor2.1}, from which \eqref{cor4.1} follows.

If for almost all $x<1$, $\nu (x) = \chi_{[a, +\infty)}(x)$ for some $a < 1$
then $\nu (x) \equiv \chi_{[a, +\infty)}(x)$ on $\R\setminus \{a\}$ and
it follows from \eqref{cor3.1} and $\varphi (0) = 1 $ that
$\varphi(z) = a / (a - z)$, $z \in \H$. By \eqref{2aa} this means
that $\varphi(z) = a / (a - z)$, $z \in \C$, which contradicts
$\varphi \in \hol((-\infty,1))$.

Finally, $\nu (x) = 0$ for almost all $x<1$ implies $\nu (x) = 0$ for all
$x<1$.
Due to the formulas \eqref{5.1} and \eqref{5.2}, the function $f$ can be
represented in the form \eqref{cor2.1} with $\mu$ given by \eqref{5.3},
which means the validity of the representation \eqref{cor4.1}.
The proof of  Corollary~\ref{cor4} is complete.


\vspace{0.25cm}
\subsection{Proof of Theorem~\ref{th4}}

\subsubsection{Necessity.}

Applying the results of Theorem~\ref{cor3}, 2) to
$\varphi \in \mathcal{P}_{\log} \setminus \exp{\Delta_{\primP}}$
we get the existence of $1< p < +\infty$  such that
$\varphi \in \mathcal{H}_p \, (\H)$,
$\varphi (x + i 0) = \widetilde{v}(x) + i v (x)$ for almost all
$x \in \R$ and $v , \widetilde{v} \in L_{p}(\R, d x)$.
Furthermore, the  Schwarz integral formula applied to $\varphi (z) / i$
gives the validity of \eqref{th4.2} (see \cite[p.145]{kres}) and since
$\varphi \in \mathcal{P}$ the function $v (x)$ is non-negative everywhere
on $\R$.
Taking the limit when $y \downarrow 0$ in the inequalities (3) of
Theorem~\ref{pr1} we get \eqref{th4.1} and thus finish the proof of
 necessity of Theorem~\sref{th4}.

\subsubsection{Sufficiency.}

Let $\varphi$ be defined in \eqref{th4.2}.
Since $v \in L_p (\R, d x)$ is  non-negative and nonzero
it follows from  \cite[p.128]{ko} that
$\varphi \in \mathcal{P} \cap \mathcal{H}_p \, (\H)$ and for  all
$ x \in \mathcal{D} (\varphi)$ we have
$\varphi (x+i0) = \widetilde{v} (x) + i v (x)$ where
$V^{\varphi}(x) = v (x)$, $U^{\varphi}(x) = \widetilde{v} (x)$ and
$ \widetilde{v} \in L_p (\R, d x)$.

Introduce the function
$h := \log \varphi \in \log \mathcal{P} \subset \mathcal{P}$.
According to \eqref{3} there exist a real number $\beta$ and
$\rho \in L_{\infty}(\R)$, satisfying $0 \leq \rho (x) \leq 1$
for almost all $x \in \R$, such that
\begin{equation}\label{9.2}
h(z) = \beta  +
\int_{-\infty}^{+\infty} \left(\frac{1}{t - z} - \frac{t}{1 + t^2}\right)\,
\rho(t)\,d t\,,\quad z \in \H\,.
\end{equation}
Therefore
\begin{equation*}
V^{h} (x,y) =
\int_{-\infty}^{+\infty} \frac{y}{(x-t)^{2}+y^{2}}\,\rho(t)\,dt\,,\quad
x\in\R\,, \quad y > 0\,,
\end{equation*}
and in view of \cite[p.107]{ko} there exists a Borel subset
$\mathcal{D}_1 (h)$ of the real line such that
\begin{equation*}
V^{h}(x) = \pi\cdot\rho(x)\,,\quad x\in\mathcal{D}_1(h)\subset\mathcal{D}(h)\,,
\quad m\left( \R\setminus\mathcal{D}_1(h)\right) = 0\,.
\end{equation*}
For each $x \in \mathcal{D} (\varphi) \cap \mathcal{D}_{1} (h)$
we can take the limit $y \downarrow 0$ in the equation
$U^{\varphi}(x,y)+iV^{\varphi}(x,y)=\exp\left( U^{h}(x,y)+iV^{h}(x,y)\right)$
to get
\begin{equation}\label{9.3}
\widetilde{v}(x) = e^{\hspace{0.05cm} {\fo{U^{h} (x)}}} \cos \pi \rho(x)\,,\qquad
v(x) = e^{ \hspace{0.05cm}{\fo{U^{h}(x)}}} \sin \pi \rho(x)\,,\qquad
x \in \mathcal{D} (\varphi) \cap \mathcal{D}_{1}(h)\,.
\end{equation}
Substituting  \eqref{9.3} into the determinant from \eqref{th4.1} gives
\begin{align*}
0
&\leq
  \begin{vmatrix}
  \widetilde{v} (x_{1})  & \widetilde{v} (x_{2}) \\
  v_{}(x_1) & v_{}(x_2) \\
  \end{vmatrix}
= \begin{vmatrix}
  e^{\hspace{0.05cm}{\fo{U^{h}(x_{1})}}} \cos\pi\rho(x_{1}) & e^{U^{h}(x_{2})} \cos\pi\rho(x_{2}) \\
  e^{\hspace{0.05cm}{\fo{U^{h}(x_{1})}}} \sin\pi\rho(x_{1}) & e^{U^{h}(x_{2})} \sin\pi\rho(x_{2}) \\
  \end{vmatrix}  \\
&= e^{\hspace{0.05cm}{\fo{U^{h}(x_{1}) + U^{h}(x_{2})}}}
   \left( \cos\pi\rho(x_{1}) \sin\pi\rho(x_{2})
   - \sin\pi\rho(x_{1}) \cos\pi\rho(x_{2}) \right) \\
&= e^{\hspace{0.05cm}{\fo{U^{h} (x_{1}) + U^{h} (x_{2})}}} \sin\pi\left(\rho(x_{2}) - \rho(x_{1})\right)\,,
\end{align*}
from which we deduce that $\rho(x_{1}) \leq \rho(x_{2}) $ for  all
$x_1<x_2$, $x_1,x_2\in E:=\mathcal{D}(\varphi)\cap\mathcal{D}_{1}(h)$.
Then the function
$\widehat{\rho} \in  M^{\uparrow } (\R)$  defined as
$\widehat{\rho} (x) := \lim_{\, t \to  x \, , \  t < x \, , \ t \in E}\rho (t)$,
$x \in \R$, is equal to $\rho$ almost everywhere on $\R$ (see \eqref{aux4},
\eqref{aux12} ) and is equivalent to $\rho$ in the space  $L_{\infty}(\R)$.
By Theorem~\ref{cor1} and \eqref{9.2} this means that
$h\in \primP \cap \log\mathcal{P}$ and therefore
$\varphi\in\mathcal{P}_{\log}$.

We now prove that $\varphi\notin\exp{\Delta_{\primP}}$.
Consider an arbitrary function in the class $\exp{\Delta_{\primP}}$
(see \eqref{2.4})
\begin{equation*}
\varphi_{\theta_1,\,a,\,\theta}(z)
= e^{\hspace{0.05cm}{\fo{\beta + i\pi\theta_1\cdot(1-\theta)}}}\cdot\dfrac{1}{(a-z)^{\theta}}\,,
\end{equation*}
where $ \theta,\theta_1\in [0,1]$ and $a,\beta\in\R$.
Equalities \eqref{4.5}, \eqref{4.1} and a straightforward calculation gives
\begin{equation*}
v_{\theta_1,\,a,\,\theta}(x)
:= \lim_{y \downarrow 0} \IM \varphi_{\theta_1,\,a,\,\theta}(x+iy)
= \begin{cases}
    e^{\hspace{0.05cm}{\fo{\beta}}} \cdot
    \dfrac{\sin\left(\pi\theta_1\cdot(1-\theta) + \theta\pi\right)}
          {|a-x|^{\theta}}\,, & \quad a<x\,, \\[0.4cm]
    e^{\hspace{0.05cm}{\fo{\beta}}} \cdot
    \dfrac{\sin\left(\pi\theta_1\cdot(1-\theta)\right)}
          {|a-x|^{\theta}}\,, & \quad a>x\,.
  \end{cases}
\end{equation*}

\vspace{0.25cm} \noindent
It is easy to verify that for any values of $\theta,\theta_1\in[0,1]$,
and $a,\beta\in\R$, either $v_{\theta_1,\,a,\,\theta}(x)=0$ almost everywhere
on $\R$, or $v_{\theta_1,\,a,\,\theta}(x)\notin\bigcup_{p>1} L_p(\R,dx)$.
Thus, $\varphi \in \mathcal{P}_{\log} \setminus \exp{\Delta_{\primP}}$.

Since \eqref{th4.2} is a partial case of the unique representation
\eqref{2} we get the uniqueness of \eqref{th4.2} and complete the
proof of Theorem~\sref{th4}.

\vspace{0.25cm}
\subsection{Proof of Theorem~\ref{th5}}$\phantom{a}$ \\

\vspace{-0.35cm}

Observe first that if $v$ satisfies \eqref{th5.2a} inequality \eqref{th5.2c}
holds for arbitrary $-\infty < x_1 < x_2 < 1$  and
$-\infty < x_1 <  1 <x_2 < +\infty$.
Therefore to check the validity of  \eqref{th4.1} for $v$ satisfying
\eqref{th5.2a}, it  suffices to prove it only for all
$1 < x _1 < x _2 < +\infty $.

\subsubsection{Necessity.}

Let $\Psi$ be universally starlike and $\varphi (z) := \Psi (z)/z$.
Then by \eqref{1} $\varphi (z)$ can be represented in the form
\eqref{cor4.1} where the measure $\mu$  corresponds to $\varphi$.
According to Corollary~\ref{cor4}, $\varphi \in \mathcal{P}_{\log}$.
If $\varphi \in \exp{\Delta_{\primP}}$, then by virtue of \eqref{2.6}
$\varphi$ belongs to the set in \eqref{th5.1}.
Otherwise, $\varphi\in \mathcal{P}_{\log}\setminus \exp{\Delta_{\primP}}$.
In this case, due to Theorem~\ref{th4}, we can find $p>1$ and
$v \in L_p (\R, d x) $ such that the representation \eqref{th5.3} holds,
$v (x) \geq 0$ for all $x\in \R$, $v$ satisfies \eqref{th5.2c} and in view
of \eqref{th4.3}, $\varphi (x+i0) = \widetilde{v}(x)+iv(x)$
$m$-a.e. on $\R$.
Corollary~\ref{cor4} states also that $\varphi \in \hol((-\infty,1))$
and $\varphi(0) =1$.
Since $\varphi \in  \mathcal{P}$ we can apply \eqref{a2} and \eqref{2} to
$\varphi\in\hol((-\infty,1))$ and get $v(x) = \IM \varphi (x+i0) = 0 $
for any $x < 1$.
This finishes the proof of \eqref{th5.2a}.
Finally, \eqref{th5.2b} follows from \eqref{th5.3} and $\varphi(0) =1$.

\subsubsection{Sufficiency.}

Let $\Psi$ belong to the set \eqref{th5.4}.
Then it follows from \eqref{2.6} that $\Psi (z)/z$ can be represented in
the form of \eqref{cor4.1} and therefore by \eqref{1} $\Psi$ is
universally starlike.

Assume now that $\Psi$ is represented in the form \eqref{th5.5}.
Then by Theorem~\ref{th4} $\Psi (z)/z  \in \mathcal{P}_{\log}
\setminus \exp{\Delta_{\primP}}$ and it follows directly
from \eqref{th5.5} and \eqref{th5.2b} that
$\Psi (z)/z \in \hol((-\infty,1))$ and $\Psi^{\, \prime} (0) = 1 $,
respectively.
By Corollary~\ref{cor4} this means that  $\Psi (z)/z$ can be represented
in the form of \eqref{cor4.1} and therefore by \eqref{1} $\Psi$ is
universally starlike.
The proof of Theorem~\ref{th5} is complete.

\vspace{0.5cm}

\end{document}